\definecolor{mno}{rgb}{0.5,0.1,0.5}
\newcommand{\R}{\mathbb R}
\newcommand{\Pp}{\mathbb P}
\newcommand{\Ee}{\mathbb E}
\newcommand{\I}{\mathbf 1}
\def\<{\langle}
\def\>{\rangle}
\newtheorem{theorem}{Theorem}[section]
\newtheorem{lemma}[theorem]{Lemma}
\newtheorem{proposition}[theorem]{Proposition}
\theoremstyle{definition}
\newtheorem{remark}[theorem]{Remark}
\begin{document}
\allowdisplaybreaks
\title[Schr\"{o}dinger operators with decaying potentials]
{\bfseries Sharp two-sided heat kernel estimates for Schr\"{o}dinger operators
with decaying potentials
}

\author{Xin Chen\qquad Jian Wang}

\date{}
\thanks{\emph{X.\ Chen:}
   School of Mathematical Sciences, Shanghai Jiao Tong University, 200240 Shanghai, P.R. China. \texttt{chenxin217@sjtu.edu.cn}}
    \thanks{\emph{J.\ Wang:}
    School of Mathematics and Statistics \& Key Laboratory of Analytical Mathematics and Applications (Ministry of Education) \& Fujian Provincial Key Laboratory
of Statistics and Artificial Intelligence, 
Fujian Normal University, 350117 Fuzhou, P.R. China. \texttt{jianwang@fjnu.edu.cn}}
\maketitle

\begin{abstract} We establish
global
two-sided heat kernel estimates (for full time and space) of the Schr\"odinger operator $-\frac{1}{2}\Delta+V$ on $\R^d$, where the potential $V(x)$ is locally bounded and behaves like $c|x|^{-\alpha}$ near infinity with $\alpha\in (0,2)$ and $c> 0$, or with $\alpha>0$ and $c<0$.
Our results
improve all known results in the literature, and it seems that the current paper is the first one
where consistent
two-sided heat kernel bounds for the long range
potentials are established.

\medskip

\noindent\textbf{Keywords:} heat kernel; Schr\"odinger operator; decaying potential;  the Markov property; the Duhamel formula
\medskip

\noindent \textbf{MSC 2010:} 35K08; 35J10; 60J65.
\end{abstract}
\allowdisplaybreaks

\section{Introduction} Semigroups and heat kernels for Schr\"{o}dinger operators involving the Laplacian operator $\Delta$ are now classical topics --- they were studied in many papers by both analytic and probabilistic methods, see e.g. \cite{BDS, CZ, Simon1} and the references therein.
Let $\mathcal L^V$ be the Schr\"odinger operator on $\R^d$ as follows:
\begin{equation}\label{e1-1}
\mathcal L^V=-\frac{1}{2}\Delta+V,
\end{equation}
where $\Delta:=\sum_{i=1}^d \frac{\partial^2}{\partial x_i^2}$ denotes the Laplacian operator on $\R^d$, and the potential
$V:\R^d\to \R$ is locally bounded and is also bounded from below.
Then, it is well known that (see e.g.\ \cite[Theorem A.2.7]{Simon1}) there exists a
Schr\"odinger semigroup $\{T_t^V\}_{t\ge 0}$ associated with the operator $\mathcal L^V$ defined by \eqref{e1-1}, so that the following Feynman-Kac formula holds:
\begin{equation}\label{e1-4}
T_t^V f(x)=\Ee_x\left[f(B_t)\exp\left(-\int_0^t V(B_s)\,ds\right)\right],\quad f\in C_b(\R^d),
\end{equation}
where $\{B_t\}_{t\ge 0}$ is the standard $\R^d$-valued Brownian motion, and $\Pp_x$ and $\Ee_x$ denote
the probability and the expectation of $\{B_t\}_{t\ge 0}$ with the initial value $x\in \R^d$ respectively.
Moreover, by \cite[Section B7]{Simon1},  there exists a jointly continuous density $p:\R_+\times \R^d\times \R^d \to \R_+$  (with respect to the Lebesgue measure)  associated
with $\{T_t^V\}_{t\ge 0}$ such that
\begin{equation}\label{e1-5}
T_t^V f(x)=\int_{\R^d}p(t,x,y)f(y)\,dy,\quad f\in C_b(\R^d),\ t>0,\ x\in \R^d.
\end{equation} In the literature, $p(t,x,y)$ is usually called the heat kernel of the Schr\"odinger semigroup $\{T_t^V\}_{t\ge 0}$ .

Throughout this paper, let $q(t,x,y)$ denote the transition density of the standard $d$-dimensional Brownian motion $\{B_t\}_{t\ge0}$ (which is also called the heat kernel of the Laplacian operator $\frac{1}{2}\Delta$). It is well known that
\begin{equation}\label{e1-6}
q(t,x,y)=(2\pi t)^{- {d}/{2}}\exp\left(-\frac{|x-y|^2}{2t}\right),\quad t>0,\ x,y\in \R^d.
\end{equation}
The notation $f\asymp g$ means that there are positive constants $c_1,c_2,c_3$ and $c_4$ such that $c_1f(c_2x)\le g(x)\le c_3f(c_4x)$, while $f\simeq g$ means that there are constants $c_5$ and $c_6>0$ such that $c_5f(x)\le g(x)\le c_6f(x)$.

In this paper, we will establish two-sided estimates of heat kernel $p(t,x,y)$ associated with the Schr\"odinger operator $\mathcal L^V$ with decaying potential $V$, i.e.,
the function $V:\R^d \to \R$ satisfies that
$\lim_{|x|\to \infty}V(x)=0$. For the Schr\"odinger operator $\mathcal L^V$ with decaying potential, there are
fruitful results for its spectrum and ground state, including the existence of finite or countably infinite discrete spectrum,
(dense sets of) embedded eigenvalues or singular continuous spectrum, existence or non-existence of zero-energy eigenfunction or bounded
ground state, and so on. For example, it was shown in
\cite{DS,FO,SW} that if $V(x)\simeq -|x|^{-\alpha}$ near infinity for
some $\alpha\in (0,2)$, then zero was not an eigenvalue for $\mathcal L^V$; while for the positive potential $V(x) \simeq |x|^{-\alpha}$ near infinity,
\cite{JK,Mu,N,Y} proved that
the behaviors of ground state were totally different between the case $\alpha\in (0,2)$
(which was called
long range)
 and $\alpha\in (2,+\infty)$ (which was called very short range).
The readers are also referred to \cite{BY,CK,DK,DS,EK,KT,RS,S} for more details on the background and properties of the spectrum
 for the Schr\"odinger operator $\mathcal L^V$ with decaying potentials.

In order to investigate ground state of $\mathcal L^V$, it is natural
to study estimates for associated heat kernel,
which are also interesting of their own.
Most efforts have been devoted to obtaining
sufficient conditions on decaying potentials under which the Sch\"odinger heat kernel enjoys
two-sided Gaussian-type estimates
with
the same form as that of $q(t,x,y)$, i.e., as the case that
$V \equiv 0$.
For example, according to \cite[Theorem 1.1, Remark 1.1, Remark 1.2 and Proposition 2.1]{Z3},
when $V\ge0$ such that $V\in L^{r_1}(\R^d)\cap L^{r_2}_{{\rm loc}}(\R^d)$ for some $r_1>d/2$ and $1<r_2<d/2$,  there exists a constant $c_0>0$ such that
\begin{equation}\label{e1-7}
c_0q(t,x,y)\le p(t,x,y)\le q(t,x,y), \quad x,y\in \R^d,\ t>0;
\end{equation}
when $V\le 0$ such that  $V\in L^{r_1}(\R^d)$ for some $r_1>d/2$, for every $T>0$ there exist constants $c_1(T)$ and $c_2(T)>0$ so that
\begin{equation}\label{e1-7a}
c_1(T)q(t,x,y)\le p(t,x,y)\le c_2(T)q(t,x,y),\quad x,y\in \R^d,\ 0<t\le T.
\end{equation}
\eqref{e1-7} and \eqref{e1-7a} are called \emph{sharp Gaussian-type estimates} and \emph{locally sharp Gaussian-type estimates}, respectively.
 A typical example for the assertion \eqref{e1-7} is a positive and locally bounded function $V$ that belongs to $L^{r_1}(\R^d)$ with $r_1>d/2$ and decays rapidly faster than quadratic decay (for example, behaves like $|x|^{-\alpha}$ near infinity for some $\alpha\in (2,+\infty)$). See also \cite{BDS} for the most recent progress
 concerning this problem.
We note that, sufficient conditions for \emph{plain Gaussian-type estimates},  i.e.,
 $p(t,x,y)\asymp q(t,x,y)$ for all $ x,y\in \R^d$ and $t>0$, were also given in \cite{Lis,MS1,MS2}.
 The readers can further see \cite{BDS, BJS, BS,Lis, S, Z2} for history and the details on this topic. We shall emphasize that due to the presence of negative eigenvalues, sharp full time bounds of $p(t,x,y)$ when $V<0$ are different from
 those
 when $V\ge0.$

The two-sided Gaussian estimates may not hold for decaying potential
$V(x)\simeq \pm |x|^{-\alpha}$ near infinity when $\alpha\in (0,2]$, which is totally different from
the case that $\alpha\in (2,+\infty)$. In particular, for critical decaying potential $V(x)=\pm {\lambda}{|x|^{-2}}$ with $\lambda>0$ (i.e., Hardy potential),
explicit estimates for $p(t,x,y)$ have been established in \cite{BFT,FMT,IKO,MS1,MS2}, where the associated estimates
were
no longer Gaussian-type, and some extra polynomial decay or growth terms
would
appear.  We note that the proof in \cite{BFT,FMT,IKO,MS1,MS2} essentially
uses
the scaling invariant property of $\mathcal L^V$, which was ensured by the special form of Hardy potential $V(x)=\pm {\lambda}{|x|^{-2}}$
and the scaling property of Brownian motion.
It seems difficult to extend these arguments to general decaying potential $V(x)\simeq \pm |x|^{-\alpha}$, mostly due to the fact that the scaling invariant property does not hold.
Recently two-sided estimates for Green functions corresponding to fractional Schr\"odinger operators with decaying potential
 have been established in \cite{KL}.
 Actually, though there are
developments on global two-sided estimates for Schr\"odinger heat kernels
with confining potential (i.e., $V (x)\to \infty$ as $|x|\to \infty$) (see \cite{BK,CW}), the corresponding estimates for $\mathcal L^V$ with decaying potential $V(x)\simeq \pm |x|^{-\alpha}$ are quite limited.
As far as we know, \cite{Z1,Z2} have tackled this problem, which proved that the associated heat kernel $p(t,x,y)$
was
the multiple of a standard Gaussian-type estimate with a weighted function. See also \cite{DS91} for the early study of Schr\"{o}dinger heat kernel bounds for the potential
$V(x)\simeq -|x|^{-\alpha}$ near infinity with $\alpha\in (0,2)$.
In particular, these results that fit to the present setting can be rephrased as follows.

\begin{theorem}\label{Z1} {\bf(\cite[Theorems 1.1 and 1.2]{Z1})}
Suppose that
$d\ge 3$
and that there exist constants $\alpha>0$ and $K_1,K_2>0$ such that
$$
K_1(1+|x|)^{-\alpha}\le V(x)\le K_2(1+|x|)^{-\alpha},\quad x\in \R^d.$$  Then the following
statements hold.
\begin{itemize}
\item [(1)] If $\alpha\in (0,2)$, then there exist constants $C_i$ $(i=1,\cdots, 4)$
so that for all $x,y\in \R^d$ and $t>0$,
\begin{equation}\label{e:z1}\begin{split}
C_1t^{-d/2}&\exp\left(-\frac{C_2|x-y|^2}{t}\right)\exp\left(-C_2\left(\frac{t}{(1+\max\{|x|,|y|\})^{\alpha }}\right)\right)\\
&\le p(t,x,y)\\
&\le C_3t^{-d/2}\exp\left(-\frac{C_4|x-y|^2}{t}\right)\exp\left(-C_4\left(\frac{t}{(1+\max\{|x|,|y|\})^{\alpha }}\right)^{(2-\alpha)/4}\right).\end{split}\end{equation}

\item [(2)] If $\alpha=2$, then there exist positive constants $C_i$ $(i=5,\cdots, 8)$ and $\theta_i$ $(i=1,2)$
so that for all $x,y\in \R^d$ and $t>0$,
\begin{equation}\label{Z1-1}
\begin{split}
 C_5t^{-d/2}\exp\left(-\frac{C_6|x-y|^2}{t}\right)w_1(t,x)w_1(t,y)
&\le p(t,x,y)\\
&\le C_7t^{-d/2}\exp\left(-\frac{C_8|x-y|^2}{t}\right)w_2(t,x)w_2(t,y),
\end{split}
\end{equation}
where $$w_i(t,x):=\left(\max\left\{\frac{t}{(1+|x|)^2},1\right\}\right)^{-\theta_i},\quad i=1,2.$$

\item [(3)] If $\alpha\in (2,+\infty)$, then for all $x,y\in \R^d$ and $t>0$,
\begin{equation*}
p(t,x,y)\asymp q(t,x,y).
\end{equation*}
\end{itemize}
\end{theorem}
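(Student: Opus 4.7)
The natural starting point is the Feynman--Kac representation, which after conditioning on the endpoints yields
\begin{equation*}
p(t,x,y)=q(t,x,y)\,\Ee^{t}_{x,y}\!\left[\exp\!\left(-\int_0^t V(B_s)\,\d s\right)\right],
\end{equation*}
where $\Ee^{t}_{x,y}$ denotes expectation with respect to the Brownian bridge from $x$ to $y$ of length $t$. All three parts then reduce to two-sided estimates on the bridge factor $F(t,x,y):=\Ee^{t}_{x,y}[\exp(-\int_0^t V(B_s)\,\d s)]$. Part (3) is immediate: when $\alpha>2$ the hypothesis places $V$ in $L^{r_1}(\R^d)\cap L^\infty_{\rm loc}(\R^d)$ for some $r_1>d/2$, and \eqref{e1-7} yields $p\asymp q$ at once.

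For part (1), set $R:=1+\max\{|x|,|y|\}$ and treat the two bounds asymmetrically. For the lower bound in \eqref{e:z1}, I would restrict $F$ to the event that the bridge avoids the ball $B(0,R/2)$: on this event $V(B_s)\le K_2(R/2)^{-\alpha}$, so the exponent is at most $CtR^{-\alpha}$, and the bridge probability of the event is controlled by standard Gaussian reflection/exit estimates and either is bounded below by a positive constant or can be absorbed into $C_1,C_2$. For the upper bound, the trivial inequality $F\le 1$ already supplies the Gaussian factor, so the new content is the decay $\exp(-C_4(t/R^\alpha)^{(2-\alpha)/4})$. I would obtain this by chaining the Markov property over a suitable number of subintervals of $[0,t]$, on each of which with positive probability the conditioned path enters a region where $V\gtrsim R^{-\alpha}$ and therefore contributes a fixed sub-unit multiplicative factor to $F$; optimizing the number and the length of the blocks against the displacement across them produces the exponent $(2-\alpha)/4$.

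For part (2), the critical scaling $V(\lambda\,\cdot)\simeq\lambda^{-2}V(\cdot)$ matches the parabolic scaling of $\Delta$, so the correction to $q$ is polynomial rather than exponential. I would adapt the Davies-type self-similar arguments and parabolic Harnack techniques used for Hardy potentials in \cite{BFT,FMT,IKO,MS1,MS2}: the split of the weights $w_i$ along the curve $t=(1+|x|)^2$ reflects whether the bridge has had time to feel the singularity at the origin, and the exponents $\theta_i$ encode the ground-state scaling at that singularity.

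The hardest step — and the one motivating the present paper — is the mismatch in part (1) between the linear lower exponent $tR^{-\alpha}$ and the sub-linear upper exponent $(tR^{-\alpha})^{(2-\alpha)/4}$. The Markov-chain block argument is intrinsically lossy because each block-level bound on $F$ is suboptimal, and closing the gap appears to require replacing the direct Feynman--Kac estimate by a Duhamel expansion that tracks the potential contribution along the whole path.
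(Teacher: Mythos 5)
This statement is not something the paper proves: it is a quoted background result from Zhang, cited as \cite[Theorems 1.1 and 1.2]{Z1} precisely so that the mismatch in part (1) — the linear exponent $t(1+\max\{|x|,|y|\})^{-\alpha}$ in the lower bound against the sub-linear exponent $\left(t(1+\max\{|x|,|y|\})^{-\alpha}\right)^{(2-\alpha)/4}$ in the upper bound — can be exhibited and then improved. The paper's own contribution is Theorem~\ref{t1-1}, which replaces this inconsistent estimate with the matching two-sided bound \eqref{t1-1-1}, and whose proof occupies Section~2 and uses a quite different technique. There is therefore no ``paper's own proof'' of the present statement to compare against.

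Viewed as a sketch of how one might argue for the cited theorem, your outline is roughly in the right spirit but has gaps. The bridge factorization is a reasonable starting point, and part~(3) via the cited estimate \eqref{e1-7} is correct. For the lower bound in part~(1), however, the event ``the bridge avoids $B(0,R/2)$'' (with $R=1+\max\{|x|,|y|\}$) can have probability zero: whenever $|x|<R/2$ — in particular whenever $|x|$ is much smaller than $|y|$, even with $R$ large — the bridge starts inside the forbidden ball, so the conditioning you propose is vacuous. One needs a preliminary escape step, or instead the exit-time decomposition \eqref{l2-4-2}, which is what the paper uses in proving its own Theorem~\ref{t1-1}. For the upper bound, asserting that an unspecified blocking of $[0,t]$ ``produces the exponent $(2-\alpha)/4$'' is not an argument; that exponent is an artifact of Zhang's particular construction and is exactly what the paper shows to be non-optimal. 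Your closing conjecture, that sharpening the bounds requires passing to a Duhamel expansion, is also not what the present paper does: the consistent estimates of Theorem~\ref{t1-1} are obtained by refined probabilistic chaining (Lemmas~\ref{l3-2} and~\ref{l3-4}), while the Duhamel series is used only in the negative-potential, fast-decay case of Proposition~\ref{p4-3}.
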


\begin{theorem}\label{Z2} {\bf(\cite[Theorem 1.3]{Z1} and \cite[Theorem 1.1]{Z2})}
Suppose that $d\ge 3$ and that there are constants $\alpha>0$ and $K_1,K_2>0$ so that $$-K_2(1+|x|)^{-\alpha}\le V(x)\le -K_1(1+|x|)^{-\alpha},\quad x\in \R^d.$$ Then the following
statements hold.
\begin{itemize}
\item[{\rm (1)}] If $\alpha\in (0,2)$, then there exist constants $C_i>0$ $(i=1,\cdots, 3)$  so that for all $x,y\in \R^d$ and $t>0$,
\begin{equation}\label{e:z2}\begin{split}
p(t,x,y)\ge C_1t^{-d/2}\exp\left(-\frac{C_2|x-y|^2}{t}\right)\exp\left(C_3\left(\frac{t}{(1+\min\{|x|,|y|\})^{\alpha }}\right)^{(2-\alpha)/4}\right).\end{split}\end{equation}

\item [(2)] If $\alpha=2$ and $0<K_2<(d-2)^2/4$, then there exist positive constants $C_i$ $(i=4,\cdots, 7)$ and $\theta_i$ $(i=3,4)$
so that for all $x,y\in \R^d$ and $t>0$,
\begin{equation}\label{Z2-1}
\begin{split}
C_4t^{-d/2}\exp\left(-\frac{C_5|x-y|^2}{t}\right)w_3(t,x)w_3(t,y)
&\le p(t,x,y)\\
&\le C_6t^{-d/2}\exp\left(-\frac{C_7|x-y|^2}{t}\right)w_4(t,x)w_4(t,y),
\end{split}
\end{equation}
where $$w_i(t,x):=\left(\max\left\{\frac{t}{(1+|x|)^2},1\right\}\right)^{\theta_i},\quad i=3,4.$$  Furthermore, if $\alpha=2$ and $K_1>(d-2)^2/4$, then  there exist constants $C_i>0$ $(i=8,\cdots, 11)$ so that for all $x,y\in \R^d$ and $t>0$,  \begin{equation}\label{e:z3}\begin{split} p(t,x,y)\ge   C_8t^{-d/2}\exp\left(-\frac{C_9|x-y|^2}{t}\right)\exp\left(C_{10}t-\frac{C_{11}|x|^2}{t}\right). \end{split}\end{equation}

\item[{\rm(3)}] If
$\alpha>0$,
then there exists a constant $K_0>0$ such that, when $K_1>K_0$, \eqref{e:z3} holds as well possibly with different constants.
\end{itemize}
\end{theorem}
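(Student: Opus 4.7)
The plan is to combine the Feynman--Kac representation \eqref{e1-4} with a careful analysis of Brownian-bridge expectations. Writing
\[
p(t,x,y) = q(t,x,y)\,\Ee_{x,y}^{t}\!\left[\exp\!\left(-\int_0^t V(B_s)\,ds\right)\right]
\]
under the Brownian bridge from $x$ to $y$ of length $t$, the hypothesis $V\le 0$ immediately yields the Gaussian lower bound $p(t,x,y)\ge q(t,x,y)$. The enhanced lower bounds in (1) and (3) then arise by restricting to classes of paths on which $-\int_0^t V(B_s)\,ds$ is quantitatively large, and splicing such excursions into a global estimate via the Markov property.

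For part (1) ($\alpha\in(0,2)$), set $r = 1 + \min\{|x|,|y|\}$. I would lower bound $p(t,x,y)$ by the contribution of paths that detour through a region $B(z_0,\rho)$ where $V$ is as negative as the kinetic budget allows, spending a duration $\tau$ there. The potential gain is of order $K_1\tau(1+|z_0|)^{-\alpha}$, while the Gaussian cost of the detour is of order $(|x-z_0|^2 + |z_0-y|^2)/(t-\tau)$; optimizing simultaneously over $|z_0|$ and $\tau$ balances these two contributions, and the resulting optimum produces precisely the exponent $(t/r^\alpha)^{(2-\alpha)/4}$ appearing in \eqref{e:z2}. Short-time Gaussian lower bounds for the Brownian motion to reach and leave $B(z_0,\rho)$, combined with the Markov property, then assemble the full estimate.

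For the subcritical case in (2) ($\alpha=2$, $K_2<(d-2)^2/4$), I would compare with the pure inverse-square potential, whose two-sided heat kernel bounds with weights of the form $w_3,w_4$ are available from \cite{BFT,FMT,MS1,MS2}. A Duhamel-type perturbation argument, using that $V$ and a comparable Hardy potential differ by an essentially bounded and well-localised function, transfers the weight structure to $\mathcal L^V$. For the supercritical cases (the second half of (2) and all of (3)), the point is that when $K_1$ is sufficiently large, $\mathcal L^V$ has a strictly negative ground-state energy $-\lambda_0<0$ with a positive ground state $\phi_0$; the bound \eqref{e:z3} is then produced by a spectral expansion $p(t,x,y) \ge c\,e^{\lambda_0 t}\phi_0(x)\phi_0(y) + \text{(non-leading terms)}$, after matching $\phi_0(x)$ with a Gaussian profile $\exp(-c|x|^2/t)$ on the relevant length scales, together with the Gaussian factor $\exp(-C_9|x-y|^2/t)$ from the underlying Brownian transition density.

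The main obstacle is the lower bound in part (1): the exponent $(2-\alpha)/4$ is not produced by a naive Jensen's inequality applied to the bridge expectation, which yields a much cruder dependence on $t/r^\alpha$. Producing the correct scaling requires identifying the optimal intermediate region and excursion duration, and then quantifying the probability that the Brownian bridge actually follows the prescribed profile --- essentially a large-deviations-type estimate for the bridge measure, since naive restrictions to rectangular tubes lose too much. A secondary difficulty in part (3) is to show that the threshold $K_0$ can be chosen uniformly in $\alpha>0$, which requires fairly delicate control of the ground state (or, equivalently, of the semigroup norms) without any explicit formula for $\phi_0$.
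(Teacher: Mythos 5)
This statement is recalled from \cite[Theorem 1.3]{Z1} and \cite[Theorem 1.1]{Z2} purely to set the stage for Theorems \ref{t1-1} and \ref{t1-2}; the paper gives no proof of it, so there is no argument in the text against which to compare yours. For the record, the arguments in \cite{Z1,Z2} are analytic --- weighted inequalities, Moser-type iteration, parabolic Harnack estimates --- whereas your sketch is probabilistic, so the routes would in any case be genuinely different and your approach is closer in spirit to the one the present paper uses for its \emph{new} bounds.

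On the substance of the sketch: the claim in part (1) that the detour optimization ``produces precisely the exponent $(2-\alpha)/4$'' is not right. A carefully executed excursion argument overshoots \eqref{e:z2} and yields the \emph{stronger} gain of order $\exp\!\left(c\,t/(1+\min\{|x|,|y|\})^\alpha\right)$, which is exactly what the present paper proves in Theorem \ref{t1-2}(1) (compare \eqref{t1-1-1-}); the exponent $(2-\alpha)/4$ in \cite{Z1} is an artifact of its iteration scheme, not the outcome of a sharp scaling balance. In part (2) (subcritical case), comparing $V$ with the pure Hardy potential $\lambda|x|^{-2}$ is not a bounded perturbation: $(1+|x|)^{-2}$ and $|x|^{-2}$ differ by a term that is \emph{unbounded} at the origin, so the proposed Duhamel transfer breaks down precisely where the Hardy weight is most singular. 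For parts (2) (supercritical) and (3), the phrase ``matching $\phi_0$ with a Gaussian profile $\exp(-c|x|^2/t)$'' conflates a $t$-independent eigenfunction with a $t$-dependent profile; the correct mechanism (which the present paper implements probabilistically in Lemma \ref{l4-4}) is to pay a Gaussian cost to transport mass into a fixed compact set, exploit the exponential growth there for a time of order $t$, and transport back. Finally, part (3) as stated allows $K_0$ to depend on $\alpha$, so the ``secondary difficulty'' about uniformity does not arise.
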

According to Theorem \ref{Z1}, we know that when $V(x)\simeq |x|^{-\alpha}$ near infinity with $\alpha\in (0,2)$, the weighted function is
bounded and enjoys exponential decay. However, it was pointed out in \cite[Remark 1.2]{Z1} that the powers involved in the exponential term in both hand sides of \eqref{e:z1} are inconsistent, and it is left an open problem to see {\it what is the optimal power on the weight function}. Meanwhile, heat kernel bounds of $p(t,x,y)$ for negative potentials $V(x)\simeq -|x|^{-\alpha}$ near infinity with $\alpha>0$ are quite different from those for positive potentials.  For example,
by Theorem \ref{Z2}, one can see that when
$\alpha\in (0,2)$
only lower bounds are known in the literature, and \emph{the corresponding upper bounds are still unknown}; see \cite[Remark 1.3]{Z1}.
On the other hand,
\cite[Remark 3.1]{Z2} showed
that
when $\alpha\in (2,+\infty)$,
if the Schr\"odinger operator $\mathcal L^V$ is not nonnegative and negative eigenvalues exist, then the associated heat kernel $p(t,x,y)$ is bounded below by a standard Gaussian times $e^{c_*t}$ with $c_*>0$. In other words, the growth rate of $p(t,x,y)$ is comparable with the heat kernel of $-\frac{1}{2}\Delta-c_*$ at large time, regardless how fast $V$ decays near infinity.

 \ \

According to all known results above, it is especially interesting to find global optimal bounds of the heat kernel for the Schr\"odinger operator $\mathcal L^V$
with decaying potentials, in particular with the potentials that are not faster than quadratic decay, i.e., $V(x)\simeq \pm |x|^{-\alpha}$ with
$\alpha\in (0,2)$. As mentioned before,
such kind of potential is usually called long range potential in the literature. To emphasize the importance of the study on long range potentials, we mention that the Coulomb potential belongs to this range.
The purpose of this paper is twofold. One is to establish two-sided estimates of $p(t,x,y)$ for positive decaying potentials $V$ so that the following condition is satisfied:
\begin{itemize}\it
\item {\bf Case 1}: there exist constants $\alpha\in (0,2)$ and $K_1,K_2>0$ such that
\begin{equation}\label{e1-1a}
K_1(1+|x|)^{-\alpha}\le V(x)\le K_2(1+|x|)^{-\alpha},\quad x\in \R^d.
\end{equation}\end{itemize}
The other is to study qualitatively sharp global heat kernel bounds for negative decaying potentials $V$ that satisfy the following condition:
\begin{itemize}\it
\item {\bf Case 2}: there exist constants $\alpha>0$ and $K_1$, $K_2>0$ such that
\begin{equation}\label{e1-1a-}
-K_2(1+|x|)^{-\alpha}\le V(x)\le -K_1(1+|x|)^{-\alpha},\quad x\in \R^d.
\end{equation}
\end{itemize}

\ \

The main results of this paper are as follows.

\begin{theorem}\label{t1-1}
Suppose that \eqref{e1-1a} holds and $d\ge 2$. Then, for any $x,y\in \R^d$ and $t>0$ it holds that
\begin{equation}\label{t1-1-1}
p(t,x,y)\asymp q(t,x,y)\left[\exp\left(-\min\left(\frac{t}{\left(1+\max\{|x|,|y|\}\right)^\alpha}, t^{(2-\alpha)/(2+\alpha)}\right)\right)\right].
\end{equation}
\end{theorem}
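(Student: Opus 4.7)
\smallskip
\noindent\textbf{Proof plan.}
The argument rests on the Feynman--Kac representation
\[
p(t,x,y) = q(t,x,y)\,\Ee^{x\to y,t}\!\left[\exp\!\left(-\int_0^t V(B_s)\,\d s\right)\right],
\]
where $\Ee^{x\to y,t}$ denotes expectation under the Brownian bridge from $x$ to $y$ of length $t$. Write $R := 1+\max\{|x|,|y|\}$ and introduce the critical scale $\rho_\star := t^{2/(2+\alpha)}$, calibrated so that $t\rho_\star^{-\alpha} = \rho_\star^2/t = t^{(2-\alpha)/(2+\alpha)}$. This is the exact balance between the Gaussian cost $\rho^2/t$ of forcing a Brownian path to distance $\rho$ from the origin and the gain $t\rho^{-\alpha}$ from sitting in the low-$V$ region there. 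The dichotomy $R\geq\rho_\star$ versus $R<\rho_\star$ determines which term in the minimum of \eqref{t1-1-1} is active; note that since $\alpha<2$, one always has $\rho_\star\geq t^{1/2}$.

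\smallskip
\noindent\textit{Lower bound.}  In the regime $R\geq \rho_\star$ (so $R\gtrsim \sqrt{t}$), I would restrict the bridge to a $c\sqrt{t}$-tube around the straight segment from $x$ to $y$. This event has probability bounded below by standard Brownian bridge fluctuation estimates, and on it $|B_s|\lesssim R$, so $V(B_s)\lesssim R^{-\alpha}$ by \eqref{e1-1a}, giving $\int_0^t V(B_s)\,\d s\lesssim t/R^\alpha$. When $R<\rho_\star$, I would instead force the bridge to pass near two waypoints at distance $\rho_\star$ from the origin at times $t/3$ and $2t/3$. The Gaussian density cost of the two outer legs is of order $\exp(-c\rho_\star^2/t)=\exp(-ct^{(2-\alpha)/(2+\alpha)})$, while on the middle leg a further tube argument keeps $V$ bounded by $C\rho_\star^{-\alpha}$, so $\int V\lesssim t\rho_\star^{-\alpha} = t^{(2-\alpha)/(2+\alpha)}$.

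\smallskip
\noindent\textit{Upper bound.}  The key new input is the sharp one-point estimate
\[
\Ee_x\!\left[\exp\!\left(-\int_0^t V(B_s)\,\d s\right)\right] \leq C\exp\!\left(-c\min\!\left(\tfrac{t}{(1+|x|)^\alpha},\,t^{(2-\alpha)/(2+\alpha)}\right)\right).
\]
The heuristic is that $\{\int_0^t V(B_s)\,\d s<u\}$ forces the Brownian path to spend most of its time outside a ball of radius $\sim (t/u)^{1/\alpha}$ around the origin, an event with probability at most $\exp(-c(t/u)^{2/\alpha}/t)$; combining $\Ee[e^{-X}]\leq e^{-u}+\Pp(X<u)$ with the optimal choice $u\sim t^{(2-\alpha)/(2+\alpha)}$ produces the claimed rate. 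Passing from this one-point bound to the full two-point bound is then carried out by combining the Markov property (in the form of the Chapman--Kolmogorov identity $p(t,x,y)=\int p(t/2,x,z)\,p(t/2,z,y)\,\d z$) with the Duhamel identity
\[
p(t,x,y) = q(t,x,y) - \int_0^t\!\!\int_{\Rd} q(t-s,x,z)\,V(z)\,p(s,z,y)\,\d z\,\d s,
\]
iterated starting from the short-time Gaussian base case $p(s,\cdot,\cdot)\asymp q(s,\cdot,\cdot)$ for $s$ bounded.

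\smallskip
\noindent\textit{Main obstacle.} The principal difficulty is pinning down the \emph{sharp} saturation exponent $t^{(2-\alpha)/(2+\alpha)}$: the earlier estimate \eqref{e:z1} of \cite{Z1} yields only the strictly weaker exponent $(t/R^\alpha)^{(2-\alpha)/4}$, and the improvement hinges on isolating the \emph{exact} optimal trade-off scale $\rho_\star$ rather than any crude moment bound on $\int V$. A second subtlety is arranging the Duhamel iteration so that the bootstrapped exponent converges to this sharp value rather than to a weaker power. The intermediate regime $R\approx \rho_\star$, where both terms of the minimum are comparable, is especially delicate, since there the bridge is indifferent between staying near its endpoints and making a long excursion, and both contributions must be tracked simultaneously.
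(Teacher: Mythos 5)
Your one‑point upper estimate
\[
\Ee_x\Bigl[\exp\Bigl(-\int_0^t V(B_s)\,\d s\Bigr)\Bigr]
\;\lesssim\;
\exp\!\Bigl(-c\min\Bigl(\tfrac{t}{(1+|x|)^\alpha},\,t^{(2-\alpha)/(2+\alpha)}\Bigr)\Bigr)
\]
is exactly the paper's Lemma~\ref{l2-2}, and the paper indeed proves it with a stopping‑time dichotomy at the critical scale $\rho_\star\simeq t^{2/(2+\alpha)}$. The promotion to a two‑point bound, however, is simpler than you suggest: one does not need Duhamel at all for the positive potential. Since $V\ge0$ gives $p\le q$ for free, the Chapman--Kolmogorov identity
$p(t,x,y)=\int p(t/2,x,z)p(t/2,z,y)\,\d z$, with one factor bounded by $q$ (and its Gaussian decay when $z$ is away from the relevant endpoint) and the other factor integrated to give $T_{t/2}^V\mathbf 1$, already produces the sharp upper bound (Lemmas~\ref{l3-1} and~\ref{l3-3}). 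Your suggestion to ``iterate Duhamel starting from a short‑time Gaussian base case'' is actually unnecessary here and would be awkward: for $V\ge0$ the Duhamel series alternates in sign, and a blind iteration does not obviously converge to the sharp rate.

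The genuine gap is in your lower bound for the regime $R:=1+\max\{|x|,|y|\}<\rho_\star$. You propose forcing the bridge to reach two waypoints at distance $\rho_\star$ at times $t/3$ and $2t/3$ and keeping $V$ controlled only on the middle leg. But you have no control on $\int V$ over the outer legs: those legs start from $x$ (resp.\ end at $y$), possibly deep inside the ball where $V$ is of order $1$, and a crude tube of width $\sqrt t$ around them only gives $\int V\lesssim t$, which kills the estimate (it is far larger than $t^{(2-\alpha)/(2+\alpha)}$). Trying to repair this with a straight segment travelled at constant speed also fails when $\alpha\ge 1$: the time spent near scale $r$ is then $\sim t\,r/\rho_\star$, and $\int_{r_0}^{\rho_\star}(t/\rho_\star)r^{-\alpha}\,\d r$ diverges as $r_0\downarrow0$ for $\alpha\ge 1$. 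What is actually needed — and what the paper does in Lemma~\ref{l3-4} — is a \emph{multi‑scale geometric chain}: a sequence of balls $B(x_k,R_k)$ with $R_k=(1+\delta_0)R_{k-1}$ moving outward from $x$ to the critical radius $\rho_\star$, with the dwell time at scale $R_k$ taken to be $\simeq R_k^{1+\alpha/2}$. Then the potential cost at scale $R_k$ is $\simeq R_k^{1+\alpha/2}\cdot R_k^{-\alpha}=R_k^{1-\alpha/2}$, and since $1-\alpha/2>0$ the sum $\sum_k R_k^{1-\alpha/2}$ is a convergent geometric series dominated by its last term $R_{N(x)}^{1-\alpha/2}\simeq t^{(2-\alpha)/(2+\alpha)}$, which is exactly the sharp rate. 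This chaining construction (together with the companion chain for the $R\ge\rho_\star$ regime in Lemma~\ref{l3-2}) is also the reason the theorem needs $d\ge 2$: one needs room to route the chain. Your proposal does not isolate this mechanism and, as written, would not produce the correct exponent.

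One further small point: the paper's lower bound machinery is not run directly on the Brownian bridge but on the decomposition $p(t,x,y)=\Ee_y[\exp(-\int_0^{\tau_U}V)\,\I_{\{\tau_U\le t\}}\,p(t-\tau_U,B_{\tau_U},y)]$ together with the explicit law of $(\tau_{B(x,r)},B_{\tau_{B(x,r)}})$ and the Dirichlet heat kernel bounds \eqref{l2-1-1}--\eqref{l2-1-2}; this is a technically cleaner way to carry out your ``tube'' idea because the exit‑time formula automatically encodes both the Gaussian cost and the time control in a single integral.
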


\ \

\begin{theorem}\label{t1-2}
Suppose that \eqref{e1-1a-} is true. Then, the following statements hold.
\begin{itemize}
\item [(1)] If \eqref{e1-1a-} holds for some $\alpha\in (0,2)$, then
for any $x,y\in \R^d$ and $t>0$,
\begin{equation}\label{t1-1-1-}
\begin{split}
&C_{1}t^{- {d}/{2}}\exp\left(-\frac{C_2|x-y|^2}{t}\right)\exp\left(\max\left\{\frac{C_3t}{(1+\min\{|x|,|y|\})^\alpha},C_3t-\frac{C_4(1+\min\{|x|,|y|\})^2}{t}\right\}\right)\\
&\le p(t,x,y)\\
&\le C_{5}t^{- {d}/{2}}\exp\left(-\frac{C_6|x-y|^2}{t}\right)\exp\left(\max\left\{\frac{C_7t}{(1+\min\{|x|,|y|\})^\alpha},C_7t-\frac{C_8(1+\min\{|x|,|y|\})^2}{t}\right\}\right).
\end{split}
\end{equation}
\item [(2)] Suppose that \eqref{e1-1a-} holds for some $\alpha\in [2,+\infty)$ and $K_1,K_2>0$. Then there exists a constant $K_*>0$ so that, if $K_1\ge K_*$, then
the two-sided estimates \eqref{t1-1-1-} are still true for every $x,y\in \R^d$ and $t>0$.

\item [(3)] Suppose that $d\ge 3$, and that \eqref{e1-1a-} holds for some $\alpha\in (2,+\infty)$ and $K_1,K_2>0.$ Then there exists $K^*>0$ such that, if $K_2\le K^*$, then
for any $x,y\in \R^d$ and $t>0$,
$$
p(t,x,y)\asymp q(t,x,y),
$$
i.e., $p(t,x,y)$ enjoys the global two-sided plain Gaussian-type estimates.
\end{itemize}
\end{theorem}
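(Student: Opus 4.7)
My plan is to treat the three parts of Theorem~\ref{t1-2} through a unified strategy based on the Feynman--Kac representation \eqref{e1-4}, Chapman--Kolmogorov / Duhamel decompositions, and Brownian bridge estimates. Write $r:=1+\min\{|x|,|y|\}$ and let $\Ee_{x,y}^t$ denote Brownian bridge expectation from $x$ to $y$ of length $t$; \eqref{e1-4} is equivalent to
\begin{equation*}
p(t,x,y) = q(t,x,y)\,\Ee_{x,y}^t\!\left[\exp\!\left(-\int_0^t V(B_s)\,\d s\right)\right].
\end{equation*}
Since $V<0$, the integrand is at least $1$, yielding the trivial lower bound $p\ge q$, which already settles part~(3) and part of (1)--(2).

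For the upper bound in~(1) I would split the bridge expectation via the hitting event $\mathcal{E}:=\{\min_{0\le s\le t}|B_s|\le \delta r\}$ for a small fixed $\delta>0$. On $\mathcal{E}^c$ one has $|B_s|\ge \delta r$, so $-V(B_s)\le Cr^{-\alpha}$ and the integrand is at most $\exp(Ct/r^\alpha)$. On $\mathcal{E}$, the uniform bound $|V|\le K_2$ gives an integrand at most $\exp(K_2 t)$, combined with a Brownian-bridge hitting estimate $\Pp_{x,y}^t(\mathcal{E})\le C\exp(-cr^2/t)$ obtained by reducing to a Gaussian hitting probability via the Doob $h$-transform representation of the bridge (or equivalently by reflection). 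Summing the two contributions gives the required $\exp(\max\{Ct/r^\alpha,\,Ct-cr^2/t\})$ upper bound in~(1).

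For the lower bound in~(1), I would establish the two competing factors separately. The factor $\exp(Ct-Cr^2/t)$, meaningful when $t\gtrsim r^2$, comes from a three-step Chapman--Kolmogorov route through the unit ball,
\begin{equation*}
p(t,x,y)\ge\int_{|z_1|\le 1}\!\int_{|z_2|\le 1} p(s_1,x,z_1)\,p(s_2,z_1,z_2)\,p(s_3,z_2,y)\,\d z_1\,\d z_2
\end{equation*}
with $s_1\asymp(1+|x|)^2$, $s_3\asymp(1+|y|)^2$ and $s_2:=t-s_1-s_3\asymp t$; the outer kernels dominate their free Gaussian counterparts (using $V<0$) and contribute the $\exp(-Cr^2/t)$ factor via standard $q$-estimates, while the middle factor satisfies $p(s_2,z_1,z_2)\ge c\exp(cs_2)q(s_2,z_1,z_2)$ because $V\le -K_1$ on the unit ball, yielding the $\exp(ct)$ gain. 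The companion factor $\exp(Ct/r^\alpha)$ is subtler: one routes the bridge so that it spends a positive fraction of time in a ball of radius $\asymp r$ around the smaller-norm endpoint (where $-V\ge cr^{-\alpha}$), either via a Chapman--Kolmogorov splitting with intermediate times $\asymp r^2$ on that side or via a localization argument on the bridge measure.

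Parts~(2) and~(3) follow the same blueprint with modifications. In~(2), the faster decay of $V$ at infinity only helps the upper bound, while $K_1\ge K_*$ guarantees $V\le -K_*$ on a sufficiently large central ball so the near-origin $\exp(ct)$ gain of the lower bound persists; this is the large-constant threshold analogous to $K_1>(d-2)^2/4$ of \cite[Theorem~1.3]{Z1} in Theorem~\ref{Z2}. For~(3) with $\alpha>2$, $d\ge3$ and $K_2\le K^*$ small, I would apply a Khasminskii-type argument: the Green-potential computation $\sup_x \Ee_x[\int_0^\infty |V(B_s)|\,\d s]\le CK_2$ is finite precisely because $\alpha>2$ and $d\ge3$, and is $<1/2$ for $K_2$ small, so Khasminskii's lemma gives $\sup_{x,t}\Ee_x[\exp(\int_0^t|V(B_s)|\,\d s)]<\infty$, whence $p\le Cq$; combined with $p\ge q$ this yields $p\asymp q$. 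The main obstacle I foresee is in part~(1): proving the bridge hitting estimate and the Chapman--Kolmogorov lower bound with constants in the exponent that actually match, since this matching is precisely what upgrades the inconsistent bounds of Theorem~\ref{Z2} to the consistent two-sided estimate~\eqref{t1-1-1-}.
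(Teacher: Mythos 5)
Your upper-bound strategy via the Brownian-bridge hitting estimate is a legitimate alternative to the paper's route (which conditions on the exit time from $B(x,|x|/2)$ or $B(y,|x-y|/3)$, uses the strong Markov property, and absorbs the short-time behaviour by writing $p(t+1,\cdot,\cdot)=T_t^Vp(1,\cdot,\cdot)$). Likewise, your Khasminskii argument for part (3) is morally the same as the paper's Duhamel/perturbation-series proof: to get a pointwise bound $p\lesssim q$ (rather than merely an $L^1$ bound) you must control $\sup_{x,y,t}\Ee^t_{x,y}\bigl[\int_0^t|V(B_s)|\,ds\bigr]=\sup_{x,y,t}\frac{1}{q(t,x,y)}\int_0^t\int q(t-s,x,z)|V(z)|q(s,z,y)\,dz\,ds$, and that 3G-type integral is exactly what Lemma \ref{lemma4.2} in the paper estimates before feeding it into the geometric Duhamel series. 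So the two mechanisms coincide once the details are filled in.

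There is, however, a genuine gap in your lower bound for part (1). You write that the middle factor ``satisfies $p(s_2,z_1,z_2)\ge c\exp(cs_2)q(s_2,z_1,z_2)$ because $V\le -K_1$ on the unit ball.'' That implication is false for general $K_1>0$: localizing the Brownian path in a fixed ball $B(0,R)$ costs a Dirichlet exit rate $\asymp e^{-ct/R^2}$, so the gain $e^{K_1 t(1+R)^{-\alpha}}$ beats the loss only when $K_1(1+R)^{-\alpha}>cR^{-2}$. With $R$ fixed (the unit ball) this forces $K_1\ge K_*$, which is exactly the hypothesis of part (2), \emph{not} part (1). The new point the paper emphasises -- and which is missing from your proposal -- is the balancing argument in Lemma \ref{l4-4} Case 2: for $\alpha\in(0,2)$ one chooses $R_0$ large enough that $K_1(1+R_0)^{-\alpha}>2c_4 R_0^{-2}$ (possible precisely because $R^{-\alpha}$ decays more slowly than $R^{-2}$), and runs the localization in $B(0,R_0)$. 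Without this you only recover the conclusion of part (2), i.e.\ (1) with an extra, spurious lower restriction on $K_1$. A secondary issue is that your three-step Chapman--Kolmogorov with $s_1\asymp(1+|x|)^2$, $s_3\asymp(1+|y|)^2$, $s_2=t-s_1-s_3$ is feasible only when $t\gtrsim(1+|x|)^2+(1+|y|)^2$, which is strictly stronger than the claimed regime $t\gtrsim r^2=(1+\min\{|x|,|y|\})^2$; the paper avoids this by first producing $p(t,x,u)\ge c\exp(c t-c(1+|x|^2)/t)$ for $u\in B(x,1)$ via a symmetric $t/3$--$t/3$--$t/3$ route through $B(0,2)$, and then attaching $y$ by a single extra half-step $p(t,x,y)\ge\int_{B(x,1)}p(t/2,x,z)q(t/2,z,y)\,dz$, so that the $y$-dependence enters only through the free Gaussian.
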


Furthermore, according to Theorem \ref{t1-1}, we know that the Green function associated with the Sch\"odinger operator $\mathcal L^V$ is well defined
for  positive potentials $V(x)\simeq (1+|x|)^{-\alpha}$ with $\alpha\in (0,2)$. More explicitly, in this case we have the following two-sided estimates for the Green function.

\begin{proposition}\label{t1-3}
Suppose that \eqref{e1-1a} hold and $d\ge 2$. Then, the Green function $G(x,y):=\int_0^\infty p(t,x,y)\,dt$ is well defined, and satisfies the following two-sided estimates
\begin{equation}\label{t1-3-1}
\begin{split}
G(x,y)&\asymp |x-y|^{-(d-2)}\exp\left(-\frac{|x-y|}{\left(1+\max\{|x|,|y|\}\right)^{\alpha/2}}\right)\cdot
\begin{cases}
1,\ \ & d\ge 3,\\
1+\left(\log\left(\frac{\left(1+\max\{|x|,|y|\}\right)^{\alpha/2}}{|x-y|}\right)\right)_+,\ &d=2.
\end{cases}
\end{split}
\end{equation}
\end{proposition}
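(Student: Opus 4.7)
The plan is to start from Theorem \ref{t1-1} and integrate the two-sided heat kernel bound in $t$. With $r:=|x-y|$, $R:=1+\max\{|x|,|y|\}$, and $\phi(t):=\min\{t/R^\alpha,\,t^{(2-\alpha)/(2+\alpha)}\}$, Theorem \ref{t1-1} gives
\[
p(t,x,y)\asymp t^{-d/2}\exp\left(-\tfrac{|x-y|^{2}}{2t}-C\phi(t)\right).
\]
The two branches of $\phi$ meet at $t_0:=R^{(2+\alpha)/2}$ with common value $R^{(2-\alpha)/2}$: on $(0,t_0)$ one has $\phi(t)=t/R^\alpha$, while on $(t_0,\infty)$ one has $\phi(t)=t^{(2-\alpha)/(2+\alpha)}$.

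The key step is to show that $G(x,y)$ is comparable to the Yukawa-type integral
\[
I(x,y):=\int_0^\infty t^{-d/2}\exp\left(-\tfrac{c\,r^{2}}{t}-\tfrac{c\,t}{R^\alpha}\right)dt,
\]
which admits a closed form via the classical identity $\int_0^\infty t^{-d/2}e^{-A/t-Bt}\,dt=2(A/B)^{(2-d)/4}K_{(d-2)/2}(2\sqrt{AB})$ in terms of the modified Bessel function $K_{(d-2)/2}$. The lower bound $G\gtrsim I$ is immediate since $\phi(t)\le t/R^\alpha$ on $(0,\infty)$. For the upper bound I would split at $t_0$: the contribution from $(0,t_0)$ is controlled by $I$ directly (as $\phi=t/R^\alpha$ there), while the tail $\int_{t_0}^\infty t^{-d/2}\exp(-c r^{2}/t-c t^{(2-\alpha)/(2+\alpha)})\,dt$ is estimated by dropping $\exp(-c r^{2}/t)\le 1$ and substituting $u=c\,t^{(2-\alpha)/(2+\alpha)}$; the resulting incomplete Gamma integral is bounded by a polynomial in $R$ times $\exp(-c R^{(2-\alpha)/2})$. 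Since $r\le 2R$ by the triangle inequality, one has $r/R^{\alpha/2}\le 2R^{(2-\alpha)/2}$, so this tail is absorbed into $I$ after a slight enlargement of the constant inside the exponential of $I$.

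Once $G\asymp I$ is in hand, the stated bounds follow from the asymptotics of $K_\nu$: the small-argument expansions $K_\nu(z)\sim\tfrac{1}{2}\Gamma(\nu)(z/2)^{-\nu}$ for $\nu>0$ and $K_0(z)\sim\log(1/z)$, together with the large-argument expansion $K_\nu(z)\sim\sqrt{\pi/(2z)}\,e^{-z}$. With $A\asymp r^{2}$ and $B\asymp 1/R^\alpha$ one has $\sqrt{AB}\asymp r/R^{\alpha/2}$ and $A/B\asymp r^{2}R^\alpha$. The small-argument regime yields $I\asymp r^{-(d-2)}$ when $d\ge 3$ and $I\asymp\log(R^{\alpha/2}/r)$ when $d=2$, while the large-argument regime yields $I\asymp r^{-(d-2)}(r/R^{\alpha/2})^{(d-3)/2}\exp(-2c\,r/R^{\alpha/2})$; the polynomial prefactor $(r/R^{\alpha/2})^{(d-3)/2}$ is absorbed into the exponential by a small adjustment of the constant in front of $r/R^{\alpha/2}$, which is permitted because the relation $\asymp$ in this paper allows different multiplicative constants inside and outside the argument. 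Combining the small- and large-argument regimes then produces the right-hand side of \eqref{t1-3-1}.

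The main obstacle is the upper bound $G\lesssim I$, or more precisely the verification that the tail $\int_{t_0}^\infty$ is dominated by $I$: the resulting factor $\exp(-c R^{(2-\alpha)/2})$ must be controlled by $\exp(-c' r/R^{\alpha/2})$ uniformly in $(x,y)$, and this is exactly where the elementary inequality $r\le 2R$ is indispensable (without it the regime $r\gg R$ would make the two decay rates incomparable). A secondary, dimension-specific subtlety is the $d=2$ case, where the small-argument behavior of $K_0$ is logarithmic rather than a negative power and must be tracked separately to recover the factor $1+(\log(R^{\alpha/2}/|x-y|))_+$ appearing in \eqref{t1-3-1}.
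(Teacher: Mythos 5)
Your proof is correct and takes a genuinely different route from the paper's. The paper argues directly on the $t$-integral: it splits into the two regimes $|x-y|\ge 1+|y|$ and $|x-y|\le 1+|y|$, and within each regime cuts the integral at the crossover points (where $|x-y|^2/t$, $t/(1+|y|)^\alpha$ and $t^{(2-\alpha)/(2+\alpha)}$ exchange dominance), estimating the two or three resulting pieces $I_1,I_2$ (resp.\ $J_1,J_2,J_3$) by elementary changes of variable. Your approach instead identifies the full Green function, up to controllable error, with the Yukawa integral $I(x,y)$ and then invokes the closed-form Bessel representation $\int_0^\infty t^{-d/2}e^{-A/t-Bt}\,dt = 2(A/B)^{(2-d)/4}K_{(d-2)/2}(2\sqrt{AB})$ together with the standard small- and large-argument asymptotics of $K_\nu$. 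This buys conceptual clarity (the target expression is precisely the Bessel asymptotic) at the price of having to verify the single comparison $G\lesssim I$, for which the key observations you supply --- $\phi(t)\le t/R^\alpha$ for the trivial lower bound, and $r\le 2R$ hence $r/R^{\alpha/2}\le 2R^{(2-\alpha)/2}$ so that the tail $\operatorname{poly}(R)\exp(-cR^{(2-\alpha)/2})$ is dominated by $r^{-(d-2)}\exp(-c'r/R^{\alpha/2})$ after shrinking $c'$ --- are exactly what is needed. The paper's direct approach is more elementary and avoids special functions, but must separately track the three sub-integrals and the case split on $|x-y|$ versus $1+|y|$; your approach handles both regimes uniformly through the Bessel asymptotics. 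One small imprecision: in the $d=2$ small-argument regime you should write $I\asymp 1+\log(R^{\alpha/2}/r)$ rather than $I\asymp\log(R^{\alpha/2}/r)$, since $K_0(z)$ is bounded away from zero for $z$ near $1$ (as you in fact acknowledge at the end when you recover the $1+(\log(\cdot))_+$ factor).
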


Below we make some comments on Theorems \ref{t1-1} and \ref{t1-2}.

\begin{itemize}
\item [(i)] Since in our paper the potential $V$ is always bounded, it is easily seen from \eqref{e1-4} that there are constants $c_1,c_2>0$ so that
\begin{equation}\label{note-}e^{-c_1 t}q(t,x,y)\le p(t,x,y)\le q(t,x,y) e^{c_2t},\quad t>0,x,y\in \R^d.
\end{equation}
Hence, the novelty of Theorems \ref{t1-1} and \ref{t1-2} is to present the explicit relation between $p(t,x,y)$ and $q(t,x,y)$. In particular, Theorem \ref{t1-1} answers the open problem mentioned in \cite[Remark 1.2]{Z1},
which improves \cite[Theorems 1.1 and 1.2]{Z1} with explicit and sharp multiple weighted functions for positive
 potential $V(x)\simeq |x|^{-\alpha}$ near infinity with $\alpha\in (0,2)$. In particular,
 the estimate \eqref{t1-1-1} reveals that
 the weighted function is with the order $\exp\left(-\frac{t}{(1+\max\{|x|,|y|\})^\alpha}\right)$ when
 $t\le \left(1+\max\{|x|,|y|\}\right)^{1+\alpha/2}$; while for $t> \left(1+\max\{|x|,|y|\}\right)^{1+\alpha/2}$, the
 weighted function is with the order $\exp(-t^{(2-\alpha)/(2+\alpha)})$.
It immediately indicates the local behaviour of potential
plays a dominant role
in the Schr\"odinger heat kernel estimates when $t$ is less than the transition time $\left(1+\max\{|x|,|y|\}\right)^{1+\alpha/2}$. When $t$ is larger than the transition time $\left(1+\max\{|x|,|y|\}\right)^{1+\alpha/2}$,
the Schr\"odinger heat kernel has \emph{sub-exponential decay} with the exponent
$\frac{2-\alpha}{2+\alpha}$, which shows the global effect of the potential. In particular, this assertion improves
both the upper and the lower bounds in \cite{Z1}.
To the best of our knowledge, such explicit estimates with sub-exponential decay for large time
are not found before in existing references.

\item [(ii)] \eqref{t1-1-1-} in Theorem \ref{t1-2} provides consistent two-sided estimates
for Schr\"odinger heat kernel with the negative potential $V(x)\simeq -|x|^{-\alpha}$ near infinity when $\alpha\in (0,2)$.
 In particular, besides the upper bound, we give consistent two-sided estimates at medium time,  which has a better lower bound than that in \eqref{e:z2} and reveals the effect of local behavior of potential.
It seems that the current paper is the first one in that two-sided consistent (global) heat kernel bounds for the long range negative potential are established.
Moreover, different from
the estimates in \cite[Theorem 1.1]{Z2} where a lower bound of $K_1$ in \eqref{e1-1a-} is required to
ensure the exponential growth of heat kernel at large time, we have removed this restriction on $K_1$
(or the restriction that $\mathcal L^V$ is not nonnegative definite)
for $\alpha\in (0,2)$. This means that two-sided estimates \eqref{t1-1-1-}  and so the exponential growth for large time hold for all
$\alpha\in (0,2)$, as long as \eqref{e1-1a-} is true for some positive constants $K_1,K_2$.

\item [(iii)] For negative potential $V(x)\simeq - |x|^{-\alpha}$ near infinity with $\alpha\in (2,+\infty)$, Theorem \ref{t1-2}(2) is a qualitative version of Theorem \ref{Z2}(2) and \cite[Remark 3.1]{Z2}. Note that in Theorem \ref{t1-2}(2) the condition that $K_1\ge K_*$ is needed, and roughly speaking it ensures that
 the Schr\"odinger operator $\mathcal L^V$ is not nonnegative definite and negative eigenvalues exist.
 Furthermore, according to \cite[Theorem 1.1, Remark 1.1, Remark 1.2 and Proposition 2.1]{Z3}, the assertion in Theorem \ref{t1-2}(3) for all $x,y\in \R^d$ and $0<t\le T$ with any fixed $T>0$
 holds true with some constants depending on $T$. Hence
the novelty here is that the
 global plain Gaussian-type estimates
 hold (for all the time).
 According to Theorem \ref{t1-2} it holds that when $V(x)=-{K}{(1+|x|)^{-\alpha}}$ with $\alpha\in (2,+\infty)$, there exist
 positive constants $K_1^*\le K_2^*$ such that estimates \eqref{t1-1-1-} are true for $K\in (0,K_1^*)$, while
 plain Gaussian-type estimates hold for $(K_2^*,+\infty)$. Though we do not know whether $K_1^*=K_2^*$ is true or not, we believe that it is related to the so-called Hardy inequality.

 \item [(iv)] For the critical case $V(x)\simeq \pm |x|^{-2}$ near infinity, Theorem \ref{Z1}(2) and Theorem \ref{Z2}(2)
 have proved that the weighted function (multiplying Gaussian-type estimate) is a polynomial function of variable
 $t$, $|x|$ and $|y|$ with exponents depending on the constants $K_1$, $K_2$ in \eqref{e1-1a} and \eqref{e1-1a-}. Even
 for the potential $V(x)=\pm{K}{(1+|x|)^{-2}}$, we do not know what is sharp exponent in the weighted function. (As mentioned before, this is completely different from the case that $V(x)=\pm {K}{|x|^{-2}}$, where the scaling invariant property holds). For example, in order
 to obtain sharp exponent in the weighted function for potential $V(x)={K}{(1+|x|)^{-2}}$, it seems that
 we have to combine more accurate chain arguments (e.g. see the proofs of Lemmas \ref{l3-2} and \ref{l3-4} below) with
 the Hardy inequality.

\item [(v)] Let us briefly make some comments on new ideas for proofs of Theorems \ref{t1-1} and \ref{t1-2}, which
mostly rely on the probabilistic approaches. Compared with analytic methods,
the strong Markov property of Brownian motion will be frequently used, which is a useful tool to study
local and global interactions between Brownian motion and the potential $V$.
In particular, the proof of upper bounds in Theorem \ref{t1-1} is based on the Feynman-Kac formula \eqref{e1-4} and the upper bounds of $T_t^V1(x)$ (that is proved by applying the stopping time arguments with suitably chosen
exiting
domain), while for lower bounds we will use the probabilistic expression \eqref{l2-4-2} of the heat kernel of $p(t,x,y)$, the explicit distribution of $(\tau_{B(x,r)},B_{\tau_{B(x,r)}})$, as well as the Dirichlet heat kernel estimates \eqref{l2-1-1}. Meanwhile, for $t$ less than the transition
 time $\left(1+\max\{|x|,|y|\}\right)^{1+\alpha/2}$ we introduce two different new chain arguments to handle lower bounds (i.e., off-diagonal estimates) of $p(t,x,y)$; see Lemmas \ref{l3-2} and \ref{l3-4}. Concerning the proof of Theorem \ref{t1-2}, similar idea (with some necessary modifications for off-diagonal estimates) of the proof for Theorem \ref{t1-1} is efficient for the upper bounds when
 $\alpha\in (0,2)$.
 On the other hand,  when $\alpha>2$, to obtain Theorem \ref{t1-2}(3) we will make full use of the classical Duhamel formula. (Note that, since in this case $V\le0$, $p(t,x,y)\ge q(t,x,y)$ holds trivially). The lower bounds of Theorem  \ref{t1-2} are again proved
 by the Markov property of $p(t,x,y)$ as well as the Dirichlet heat kernel estimates \eqref{l2-1-2}.
\end{itemize}

Furthermore, we have some remarks on the effectiveness and generalizations of Theorems \ref{t1-1} and \ref{t1-2}.

\begin{remark}
\begin{itemize}
\item[{\rm(i)}] According to the proof below we can see that upper heat kernel bounds in Theorem \ref{t1-1} still hold true when $d=1$, and the condition
that $d\ge 2$ is necessary due to the chain arguments used in
proofs of Lemmas \ref{l3-2} and \ref{l3-4}.
    \item[{\rm(ii)}] By the proof or the comparison argument it is easy to see that
    upper bounds of heat kernel
    in Theorem \ref{t1-1} hold when the lower bound of the potential $V$ in \eqref{e1-1a} is satisfied, i.e., when the first inequality in \eqref{e1-1a} is satisfied. Similarly, lower bounds of heat kernel
      in Theorem \ref{t1-1} are satisfied, when the upper bound of the potential $V$ in \eqref{e1-1a} is fulfilled. Such conclusion still works for Theorem \ref{t1-2}.
    \item[{\rm(iii)}] As explained in \cite{CW}, it will be clear that Theorems \ref{t1-1} and \ref{t1-2} still hold
    when $\frac{1}{2}\Delta$ is replaced by some more general generators where the Dirichlet heat kernel estimates \eqref{l2-1-1} and \eqref{l2-1-2}
    as well as some volume doubling conditions are valid; for example,  a uniformly elliptic operator with bounded measurable coefficients in $\R^d$, or when $\R^d$ is replaced by a $d(\ge3)$-dimensional complete noncompact manifold with nonnegative Ricci curvature outside a compact set.
    \end{itemize}
\end{remark}

\ \

The rest of the paper is arranged as follows. In the next section we present the proofs of Theorem \ref{t1-1} and Proposition \ref{t1-3}, and in Section 3 we give the proof of Theorem \ref{t1-2}. Some known lemmas that are used in the proofs above are collected in the Appendix of the paper.

\section{The case for positive potentials}
This section is devoted to the proofs of Theorem \ref{t1-1} and Proposition \ref{t1-3} concerning the positive potential $V$, which is split into four subsections. Note that in this case, since $V\ge0$, it follows from \eqref{e1-4} that
$$p(t,x,y)\le q(t,x,y),\quad t>0, x,y\in \R^d.$$
\subsection{Preliminaries} Throughout this section, \emph{we always assume that \eqref{e1-1a} holds}, and define
\begin{equation}\label{e1-2a}
t_0(s):=(1+s)^{1+ {\alpha}/{2}},\quad s\ge 0.
\end{equation} For every domain $U \subset \R^d$, denote by $\tau_U:=\inf\{t>0: B_t \notin U\}$ the first exit time
from $U$ for Brownian motion $\{B_t\}_{t\ge 0}$.
For each $x\in \R^d$ and $r>0$, set $B(x,r):=\{z\in \R^d: |z-x|<r\}$.
The following lemma is crucial for the proof of upper bound estimates in Theorem  \ref{t1-1}.
\begin{lemma}\label{l2-2}
There exist constants $C_1,C_2>0$ so that for all $t>0$ and $x\in \R^d$,
\begin{equation}\label{l2-2-1}
\begin{split}
T_t^V1(x)\le C_1\left[\exp\left(-\frac{C_2t}{(1+|x|)^\alpha}\right)+\exp\left(-C_2t^{(2-\alpha)/(2+\alpha)}\right)\right].
\end{split}
\end{equation}
\end{lemma}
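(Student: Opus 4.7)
The plan is to use the Feynman--Kac representation \eqref{e1-4} together with a first-exit-time split from a ball $B(x,r)$ whose radius is chosen according to the regime of $t$ relative to the transition time $t_0(|x|)=(1+|x|)^{1+\alpha/2}$ defined in \eqref{e1-2a}. Since $V\ge 0$, for any $r>0$ I write
\[
T_t^V 1(x) = \Ee_x\Big[e^{-\int_0^t V(B_s)\,ds};\tau_{B(x,r)}>t\Big] + \Ee_x\Big[e^{-\int_0^t V(B_s)\,ds};\tau_{B(x,r)}\le t\Big]\le e^{-t \inf_{B(x,r)}V}+\Pp_x(\tau_{B(x,r)}\le t),
\]
and then I invoke the standard Brownian exit-time tail $\Pp_x(\tau_{B(x,r)}\le t)\le C\exp(-cr^2/t)$ (valid whenever $r^2/t$ is bounded below; the complementary range of parameters is handled by enlarging $C_1$ and using the trivial bound $T_t^V 1(x)\le 1$).

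In the short-time regime $t\le t_0(|x|)$, I would take $r=(1+|x|)/2$. For any $y\in B(x,r)$ one has $1+|y|\le \tfrac52(1+|x|)$, so the lower bound in \eqref{e1-1a} gives $V(y)\ge c_0(1+|x|)^{-\alpha}$ and hence the first term above is at most $\exp\bigl(-c_0 t(1+|x|)^{-\alpha}\bigr)$. The exit term contributes $C\exp\bigl(-c(1+|x|)^2/t\bigr)$, and the condition $t\le (1+|x|)^{1+\alpha/2}$ is precisely what guarantees $(1+|x|)^2/t\ge t/(1+|x|)^\alpha$, so both contributions are absorbed into the first exponential in \eqref{l2-2-1}.

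In the long-time regime $t>t_0(|x|)$, I would instead take $r=t^{2/(2+\alpha)}$. The regime condition forces $r\ge 1+|x|$, so $y\in B(x,r)$ satisfies $|y|\le 2r$ and $V(y)\ge c_0(1+2r)^{-\alpha}\ge c_0' r^{-\alpha}=c_0' t^{-2\alpha/(2+\alpha)}$. Plugging into the split,
\[
T_t^V 1(x)\le \exp\bigl(-c_0'\, t^{1-2\alpha/(2+\alpha)}\bigr)+C\exp(-c r^2/t),
\]
and the exponent $r^2/t = t^{(2-\alpha)/(2+\alpha)}$ has been chosen precisely so that both exponents equal a constant multiple of $t^{(2-\alpha)/(2+\alpha)}$, giving the second exponential in \eqref{l2-2-1}.

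The combined estimate then matches the statement, the minimum on the right-hand side of \eqref{l2-2-1} being a consequence of the fact that exactly one of the two exponents is active in each regime. The main technical obstacle is the choice of $r$ and the verification that the same $r$ both (i) makes $V$ large enough on $B(x,r)$ to dominate the Feynman--Kac factor and (ii) keeps $r^2/t$ comparable with the target exponent; the computation above shows that $r=t^{2/(2+\alpha)}$ is the unique scaling for which these two requirements balance, which is why the exponent $(2-\alpha)/(2+\alpha)$ appears. A small additional check is needed to cover the bounded range of $t$ (or of $r^2/t$) where the Gaussian exit-time bound is not sharp, but there the trivial estimate $T_t^V 1(x)\le 1$ already fits into \eqref{l2-2-1} after enlarging $C_1$.
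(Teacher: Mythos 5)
Your proof is correct and follows essentially the same strategy as the paper: split on the first exit time from a ball whose radius is $(1+|x|)/2$ when $t\le t_0(|x|)$ and $\asymp t^{2/(2+\alpha)}$ when $t> t_0(|x|)$, bound the Feynman--Kac factor on the survival event by $\exp(-t\inf_{B(x,r)}V)$, and bound the exit event by a Gaussian tail. The only cosmetic difference is that you invoke the exit-time tail $\Pp_x(\tau_{B(x,r)}\le t)\le C\exp(-cr^2/t)$ as a standard black box, whereas the paper derives the equivalent estimate in place by a further strong-Markov split according to whether $B_t$ lands back near $x$ or not; both yield the same bound.
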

\begin{proof} We split the proof into two cases.

{\bf Case 1: $t\le t_0(|x|)$.}\,\, We first consider the case $|x|\ge 2$. Let $U:=B(x,{|x|}/{2})$. By \eqref{e1-4},
\begin{equation}\label{l2-2-2}
\begin{split}
T_t^V 1(x)&=\Ee_x\left[\exp\left(-\int_0^t V(B_s)\,ds\right)\I_{\{\tau_U>t\}}\right]+
\Ee_x\left[\exp\left(-\int_0^t V(B_s)\,ds\right)\I_{\{\tau_U\le t\}}\right]\\
&=:I_1+I_2.
\end{split}
\end{equation}

It follows from \eqref{e1-1a} that
$\inf_{z\in U}V(z)\ge \frac{c_1}{(1+|x|)^\alpha}$,
which implies
\begin{equation*}
I_1\le \exp\left(-\frac{c_1 t}{(1+|x|)^\alpha}\right).
\end{equation*}
Meanwhile, we have
\begin{align*}
I_2&\le \Ee_x\left[\exp\left(-\int_0^t V(B_s)\,ds\right)\I_{\{\tau_U\le t,B_t\in B(x,{|x|}/{3})\}}\right] +\Ee_x\left[\exp\left(-\int_0^t V(B_s)\,ds\right)\I_{\{\tau_U\le t,B_t\notin B(x,{|x|}/{3})\}}\right]\\
&=:I_{21}+I_{22}.
\end{align*}
By the strong Markov property, it holds that
\begin{equation}\label{l2-2-3}
\begin{split}
I_{21}&\le \Pp_x\left(\tau_U\le t, B_t\in B(x,{|x|}/{3})\right)\le \Ee_x\left[\I_{\{\tau_U\le t\}}\Pp_{B_{\tau_U}}\left(B_{t-\tau_U}\in B(x,{|x|}/{3})\right)\right]\\
&\le \sup_{s\in [0,t],z\in \partial U}\int_{B(x,{|x|}/{3})}q(s,z,y)\,dy\\
&\le c_2\sup_{s\in [0,t]}\left(  \left(\frac{|x|^2}{s}\right)^{{d}/{2}}\exp\left(-\frac{c_3|x|^2}{s}\right)\right)
\le c_4\exp\left(-\frac{c_5|x|^2}{t}\right),
\end{split}
\end{equation}
where in the fourth inequality we have used \eqref{e1-6},
and the last inequality is due to the fact $t\le t_0(|x|)\le (2+|x|)^2\le 4|x|^2$ (thanks to
$|x|\ge 2$).
On the other hand,
\begin{equation}\label{l2-2-4}
\begin{split}
I_{22}&\le \Pp_x\left(B_t\notin B(x,{|x|}/{3})\right)=\int_{B(x,{|x|}/{3})^c}q(t,x,y)\,dy\\
&\le c_6t^{-{d}/{2}}\int_{{|x|}/{3}}^\infty e^{-\frac{c_7r^2}{t}}r^{d-1}\,dr=c_6\int_{{|x|}/{3}}^\infty \left(\frac{r^2}{t}\right)^{{d}/{2}}e^{-\frac{c_7r^2}{t}}r^{-1}\,dr
\\
&\le c_8\exp\left(-\frac{c_9|x|^2}{t}\right),
\end{split}
\end{equation}
where the second inequality is due to \eqref{e1-6}
and in the last inequality we have also used the fact $t\le 4|x|^2$ thanks to $|x|\ge 2$.

Putting both estimates above for $I_1$ and $I_2$ together yields that for all $t\le t_0(|x|)$,
\begin{align*}
T_t^V1(x)&\le c_{10}\left(\exp\left(-\frac{c_{11}t}{(1+|x|)^\alpha}\right)+\exp\left(-\frac{c_{11}|x|^2}{t}\right)\right)\le c_{12}\exp\left(-\frac{c_{13}t}{(1+|x|)^\alpha}\right),
\end{align*}
where we have used the fact that
\begin{align*}
\frac{t}{(1+|x|)^\alpha}\le \frac{c_{14}|x|^2}{t},\quad t\le t_0(|x|),\ |x|\ge 2.
\end{align*}

When $|x|\le 2$, $t\le t_0(|x|)\le c_{15}$ and so
\begin{align*}
T_t^V1(x)&\le c_{16}\le c_{17}\exp\left(-\frac{c_{18}t}{(1+|x|)^\alpha}\right).
\end{align*}

Hence, we have proved \eqref{l2-2-1} for the case that $|x|\le t_0(|x|)$.

{\bf Case 2: $t>t_0(|x|)$.}\,\, If $t>t_0(|x|)$, then  $|x|\le c_1t^{{2}/({2+\alpha})}$. Let $R(t):=2c_1t^{{2}/({2+\alpha})}$ and
$U:=B(x,R(t))$. We still define $I_1$ and $I_2$ by \eqref{l2-2-2} with $U=B(x,R(t))$.
By some direct computations, we obtain
\begin{align*}
I_1&\le \exp\left(-t\cdot \inf_{z\in U}V(z)\right)\le
\exp\left(-c_2t\cdot \frac{1}{(1+R(t))^\alpha}\right)\le \exp\left(-c_3t^{(2-\alpha)/(2+\alpha)}\right),
\end{align*}
where in the last inequality we have used the fact that $t\ge t_0(|x|)\ge 1$.
Meanwhile, it holds that
\begin{align*}
I_2&\le \Ee_x\left[\exp\left(-\int_0^t V(B_s)\,ds\right)\I_{\{\tau_U\le t,B_t\in B(x,{R(t)}/{3})\}}\right]\\
&\quad+\Ee_x\left[\exp\left(-\int_0^t V(B_s)\,ds\right)\I_{\{\tau_U\le t,B_t\notin B(x,{R(t)}/{3})\}}\right]\\
&=:I_{21}+I_{22}.
\end{align*}
Applying the same arguments for \eqref{l2-2-3} and \eqref{l2-2-4} with $B(x,{|x|}/{3})$ replaced by $B(x,{R(t)}/{3})$, and noting that $R(t)\ge c_4t_0(|x|)^{{2}/({2+\alpha})}\ge c_5$, we can prove
\begin{align*}
I_{21}+I_{22}&\le c_6\exp\left(-\frac{c_7R(t)^2}{t}\right)\le c_8\exp\left(-c_9t^{(2-\alpha)/(2+\alpha)}\right).
\end{align*}
Combining with all the estimates above, we can deduce that for all $t> t_0(|x|)$,
\begin{align*}
T_t^V1(x)&\le c_{10}\exp\left(-c_{11}t^{(2-\alpha)/(2+\alpha)}\right).
\end{align*} The proof is complete.
\end{proof}

\begin{remark} \label{remark:one} Explicit bounds of $T_t^V1(x)$ somehow indicate the relation between the Schr\"{o}dinger heat kernel $p(t,x,y)$ and the Gaussian heat kernel $q(t,x,y)$. In particular, according to Theorem \ref{t1-1}, the right hand side of \eqref{l2-2-1} is the multiple weighted function of $q(t,x,y)$ for $p(t,x,y)$. \end{remark}

\subsection{Proof of Theorem \ref{t1-1}: the case that
$0<t\le C_0t_0(\max\{|x|,|y|\})$
} From this subsection, without any mention we always assume that $|x|\le |y|$. We firstly prove the following
on-diagonal estimate.
\begin{lemma}\label{l3-1}
For any positive constants $C_0$ and $C_0'$, there exist positive constants $C_{1}$, $C_{2}$, $C_{3}$ and $C_{4}$ such that for all $t>0$ and $x,y\in \R^d$ with $0<t\le C_0t_0(|y|)$ and $|x-y|\le C_0't^{{1}/{2}}$,
\begin{equation}\label{l3-1-1}
\begin{split}
&C_{1}t^{- {d}/{2}}\exp\left(-\frac{C_{2}t}{(1+|y|)^\alpha}\right)\le p(t,x,y)\le
C_{3}t^{- {d}/{2}}\exp\left(-\frac{C_{4}t}{(1+|y|)^\alpha}\right).
\end{split}
\end{equation}
\end{lemma}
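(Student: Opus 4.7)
The plan is to prove the upper and lower bounds separately, both matching the form $t^{-d/2}\exp(-ct/(1+|y|)^\alpha)$, which amounts to multiplying the on-diagonal Gaussian value $t^{-d/2}$ by a weight whose size is dictated by the local behaviour of $V$ near $y$ (recall we have normalised so that $|x|\le|y|$).

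For the upper bound, I would exploit the semigroup property together with Lemma \ref{l2-2}. Since $V\ge 0$ implies $p(s,z,x)\le q(s,z,x)\le c s^{-d/2}$, the semigroup property yields
\[
p(t,x,y)=\int_{\R^d}p(t/2,y,z)p(t/2,z,x)\,dz\le ct^{-d/2}\int_{\R^d}p(t/2,y,z)\,dz=ct^{-d/2}T_{t/2}^V1(y).
\]
Applying Lemma \ref{l2-2} to $T_{t/2}^V1(y)$ produces the sum of two exponentials. A direct computation shows that for $t\le C_0 t_0(|y|)=C_0(1+|y|)^{1+\alpha/2}$,
\[
\frac{t}{(1+|y|)^\alpha}\le C_0^{2\alpha/(2+\alpha)}\,t^{(2-\alpha)/(2+\alpha)},
\]
so the second exponential can be absorbed into the first, yielding the claimed upper bound in \eqref{l3-1-1}.

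For the lower bound, I would use a Feynman--Kac localisation argument on a ball. Set $U=B(x,R)$ with $R=C_* t^{1/2}$ for a sufficiently large $C_*>2C_0'$; then $y\in U$ since $|x-y|\le C_0' t^{1/2}\le R/2$. Let $p^U$ and $q^U$ denote the Dirichlet heat kernels on $U$ for $-\tfrac{1}{2}\Delta+V$ and $-\tfrac{1}{2}\Delta$ respectively. Feynman--Kac gives
\[
p(t,x,y)\ge p^U(t,x,y)\ge q^U(t,x,y)\exp\Bigl(-t\sup_{z\in U}V(z)\Bigr),
\]
and the Dirichlet heat kernel estimate \eqref{l2-1-1} yields $q^U(t,x,y)\ge ct^{-d/2}$ under the spatial constraint $|x-y|\le C_0' t^{1/2}$ (with $C_*$ chosen accordingly). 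It remains to verify $\sup_{z\in U}V(z)\le C(1+|y|)^{-\alpha}$. The key estimate is that $t\le C_0 t_0(|y|)$ together with $\alpha\in(0,2)$ gives $R=C_* t^{1/2}\le C''(1+|y|)^{(2+\alpha)/4}$; since $(2+\alpha)/4<1$, for $|y|$ sufficiently large this yields $R\le|y|/2$, hence $\inf_{z\in U}(1+|z|)\ge(1+|y|)/2$; for $|y|$ bounded the inequality $V\le K_2\le C(1+|y|)^{-\alpha}$ is automatic.

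The main technical point is precisely this interplay between the time scale $t$ and the spatial scale $|y|$ through the transition time $t_0(|y|)$, which controls both the comparability of the two exponential terms in Lemma \ref{l2-2} (for the upper bound) and the smallness of $R=C_* t^{1/2}$ relative to $1+|y|$ (for the lower bound). Once these comparisons are in place, both bounds reduce to standard Feynman--Kac/semigroup manipulations combined with Lemma \ref{l2-2} and \eqref{l2-1-1}.
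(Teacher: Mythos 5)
Your proposal is correct and follows essentially the same route as the paper: for the upper bound, the semigroup identity $p(t,x,y)\le ct^{-d/2}T_{t/2}^V1(y)$ combined with Lemma \ref{l2-2} and the comparison $\tfrac{t}{(1+|y|)^\alpha}\lesssim t^{(2-\alpha)/(2+\alpha)}$ when $t\le C_0t_0(|y|)$; for the lower bound, Feynman--Kac localisation on a ball of radius $\simeq t^{1/2}$ plus the Dirichlet heat kernel estimate. Two cosmetic points: the paper centres the ball at $y$ rather than $x$ (immaterial since $|x-y|\le C_0't^{1/2}$), and the Dirichlet lower bound $q_U(t,\cdot,\cdot)\ge ct^{-d/2}$ you need comes from \eqref{l2-1-2} (the interior estimate), not \eqref{l2-1-1} (the normal-derivative estimate).
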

\begin{proof}
Suppose that $0<t\le C_0t_0(|y|)$ and $|x-y|\le C_0't^{{1}/{2}}$. By the semigroup property, we have
\begin{align*}
p(t,x,y)&=\int_{\R^d}p\left({t}/{2},y,z\right)p\left({t}/{2},z,x\right)\,dz\le c_1t^{- {d}/{2}}\int_{\R^d}p\left({t}/{2},y,z\right)\,dz=
c_1t^{- {d}/{2}}T_{{t}/{2}}^V 1(y)\\
&\le c_2t^{-{d}/{2}}\exp\left(-\frac{c_3t}{(1+|y|)^\alpha}\right).
\end{align*}
Here in the first inequality we have used the fact
\begin{align*}
p\left({t}/{2},z,x\right)\le q\left({t}/{2},z,x\right)\le c_1t^{-{d}/{2}},
\end{align*}
and the last inequality follows from that for all $t\le C_0t_0(|y|)$,
\begin{align*}
T_{{t}/{2}}^V 1(y)&\le c_4\left(\exp\left(-\frac{c_5t}{(1+|y|)^\alpha}\right)+\exp\left(-c_5t^{(2-\alpha)/(2+\alpha)}\right)\right)\le c_6\exp\left(-\frac{c_7t}{(1+|y|)^\alpha}\right),
\end{align*}
where we have used \eqref{l2-2-1} and the fact that
$$\frac{t}{(1+|y|)^\alpha}\le c_8t^{(2-\alpha)/(2+\alpha)},\quad t\le C_0t_0(|y|).$$

Next, define $U:=B(y,2C_0't^{{1}/{2}})$. Since
$$t^{{1}/{2}}\le C_0^{{1}/{2}}t_0(|y|)^{{1}/{2}}\le C_0^{{1}/{2}}(1+|y|)^{1/2+{\alpha}/{4}},$$
we have $\sup_{z\in U}V(z)\le c_9(1+|y|)^{-\alpha}$. Taking any $f\in C_b(\R^d)$ with ${\rm supp}[f]\subset B(y,C_0't^{{1}/{2}})$, it holds that
\begin{align*}
T_t^Vf(y)&=\Ee_y\left[f(B_t)\exp\left(-\int_0^t V(B_s)\,ds\right)\right]\ge \Ee_y\left[f(B_t)\exp\left(-\int_0^t V(B_s)\,ds\right)\I_{\{t<\tau_U\}}\right]\\
&\ge \exp\left(-t\sup_{z\in U}V(z)\right)\Ee_y\left[f(B_t)\I_{\{t<\tau_U\}}\right]\ge \exp\left(-\frac{c_{10}t}{(1+|y|)^\alpha}\right)\int_{B(y,C_0't^{{1}/{2}})}q_U(t,y,z)f(z)\,dz\\
&\ge c_{11}t^{-{d}/{2}}\exp\left(-\frac{c_{10}t}{(1+|y|)^\alpha}\right)\int_{B(y,C_0't^{{1}/{2}})}f(z)\,dz.
\end{align*}
Here the third inequality is due to the fact that ${\rm supp}[f]\subset B(y,C_0't^{{1}/{2}})$, and in the last inequality we have used
\begin{equation}\label{e:Diri}
q_{B(y,2C_0't^{{1}/{2}})}(t,z_1,z_2)\ge c_{11}t^{-{d}/{2}},\quad z_1,z_2\in B(y,C_0't^{{1}/{2}}),
\end{equation}
which is proved by \eqref{l2-1-2}.
According to the expression above, we can deduce the lower bound of $p(t,x,y)$ in \eqref{l3-1-1} immediately.
\end{proof}

\begin{lemma}\label{l3-3}
For any positive constants $C_0$ and $C_0'$, there exist positive constants $C_{5}$ and $C_{6}$ such that for all $t>0$ and $x,y\in \R^d$ with  $0<t\le C_0t_0(|y|)$ and $|x-y|> 2C_0't^{1/2}$,
\begin{equation}\label{l3-3-1}
 p(t,x,y)\le C_{5}t^{-d/2}\exp\left(-C_{6}\left(\frac{t}{(1+|y|)^\alpha}
 +\frac{|x-y|^2}{t}\right)\right).
\end{equation}
\end{lemma}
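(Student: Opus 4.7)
The plan is to pair the on-diagonal weight bound from Lemma \ref{l2-2} with the trivial Gaussian factor $p\le q$ (valid since $V\ge 0$) via the Chapman--Kolmogorov identity
\[
p(t,x,y)=\int_{\R^d}p(t/2,x,z)\,p(t/2,z,y)\,dz,
\]
splitting $\R^d$ into the ball $A_1:=B(y,|x-y|/2)$ around $y$ and its complement $A_2$.

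On $A_1$ the triangle inequality gives $|x-z|\ge |x-y|/2$, so \eqref{e1-6} and $p\le q$ yield $p(t/2,x,z)\le C t^{-d/2}\exp(-c|x-y|^2/t)$ uniformly in $z\in A_1$. Pulling this out of the integral leaves $\int_{A_1} p(t/2,z,y)\,dz\le T_{t/2}^V 1(y)$ (using symmetry of $p$), which Lemma \ref{l2-2} bounds by $C(\exp(-c't/(1+|y|)^\alpha)+\exp(-c't^{(2-\alpha)/(2+\alpha)}))$. The key elementary observation is that in the regime $t\le C_0 t_0(|y|)=C_0(1+|y|)^{1+\alpha/2}$ one has
\[
t^{(2-\alpha)/(2+\alpha)}=t\cdot t^{-2\alpha/(2+\alpha)}\ge c\,\frac{t}{(1+|y|)^\alpha},
\]
since $(1+\alpha/2)\cdot 2\alpha/(2+\alpha)=\alpha$; hence the second exponential is absorbed into the first.

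On $A_2$ we have $|z-y|>|x-y|/2$, and the roles reverse: $p(t/2,z,y)\le Ct^{-d/2}\exp(-c|x-y|^2/t)$, and the remaining mass is $T_{t/2}^V 1(x)$. This is where the main technical point lies: we need to convert a bound with natural weight $(1+|x|)^{-\alpha}$ into one phrased with $(1+|y|)^{-\alpha}$. The standing convention $|x|\le|y|$ gives $(1+|x|)^{-\alpha}\ge(1+|y|)^{-\alpha}$, while the same transition-time inequality from the previous step handles the $t^{(2-\alpha)/(2+\alpha)}$ contribution from Lemma \ref{l2-2} irrespective of whether $t\le t_0(|x|)$ or not. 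Together these yield $T_{t/2}^V 1(x)\le C\exp(-c\,t/(1+|y|)^\alpha)$.

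Summing the contributions from $A_1$ and $A_2$ and taking $C_6$ to be the minimum of the constants in the Gaussian factor $\exp(-c|x-y|^2/t)$ and the potential weight $\exp(-c' t/(1+|y|)^\alpha)$ produces \eqref{l3-3-1}. The hypothesis $|x-y|>2C_0't^{1/2}$ is not strictly required by the argument; it only guarantees that the Gaussian off-diagonal factor is genuinely small, so that the resulting estimate complements (rather than subsumes) the on-diagonal bound of Lemma \ref{l3-1}.
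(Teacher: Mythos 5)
Your proof is correct and follows essentially the same route as the paper's: split the Chapman--Kolmogorov integral at $|z-y|=|x-y|/2$, pull out the Gaussian factor from the half with the large displacement, bound the remaining mass by $T_{t/2}^V 1$ at $y$ or $x$ via Lemma \ref{l2-2}, use $|x|\le|y|$ to pass from $(1+|x|)^{-\alpha}$ to $(1+|y|)^{-\alpha}$, and absorb the $t^{(2-\alpha)/(2+\alpha)}$ term using $t\le C_0t_0(|y|)$. Your concluding remark that the hypothesis $|x-y|>2C_0't^{1/2}$ is not actually used in the estimate, only to delineate the off-diagonal regime, is also accurate.
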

\begin{proof}
Suppose that $0<t\le C_0t_0(|y|)$ and $|x-y|> 2C_0't^{1/2}$.
By the semigroup property we have
\begin{align*}
  p(t,x,y)
&=\int_{\{z:|z-y|\le {|x-y|}/{2}\}}p\left(t/2,x,z\right)p\left(t/2,z,y\right)dz +
\int_{\{z:|z-y|> {|x-y|}/{2}\}}p\left(t/2,x,z\right)p\left(t/2,z,y\right)dz\\
&=:I_1+I_2.
\end{align*}

When $|z-y|\le {|x-y|}/{2}$, it holds that
\begin{equation*}
|z-x|\ge |x-y|-|z-y|\ge \frac{|x-y|}{2}\ge C_0't^{1/2}.
\end{equation*}
Then
\begin{align*}
p\left(t/2,x,z\right)&\le q(t/2,x,z)\le c_1t^{-d/2}\exp\left(-\frac{c_2|x-z|^2}{t}\right)\le c_3t^{-d/2}\exp\left(-\frac{c_4|x-y|^2}{t}\right).
\end{align*}
Hence, we obtain
\begin{align*}
I_1&\le \sup_{z\in \R^d:|z-y|\le {|x-y|}/{2}}p\left(t/2,x,z\right)\cdot \int_{\R^d}p\left(t/2,z,y\right)\,dz\le c_3t^{-d/2}\exp\left(-\frac{c_4|x-y|^2}{t}\right)\cdot T_{t/2}^V 1(y)\\
&\le c_5t^{-d/2}\exp\left(-c_6\left(\frac{|x-y|^2}{t}+\frac{t}{(1+|y|)^\alpha}\right)\right),
\end{align*}
where the third inequality follows from \eqref{l2-2-1} and the fact $0<t\le C_0t_0(|y|)$.

Similarly, when $|z-y|>{|x-y|}/{2}>C_0't^{1/2}$, we get
\begin{align*}
p\left(t/2, z,y\right)&\le  q\left(t/2,z,y\right)\le c_{7}t^{-d/2}\exp\left(-\frac{c_{8}|y-z|^2}{t}\right)\le  c_{9}t^{-d/2}\exp\left(-\frac{c_{10}|x-y|^2}{t}\right).
\end{align*}
Therefore, using \eqref{l2-2-1} again we have
\begin{align*}
I_2& \le \sup_{z\in \R^d:|z-y|> {|x-y|}/{2}}p\left(t/2,z,y\right)\cdot \int_{\R^d}p\left(t/2,x,z\right)\,dz\\
&\le c_{11}t^{-d/2}\exp\left(-\frac{c_{12}|x-y|^2}{t}\right)\cdot\left(\exp\left(-\frac{c_{12}t}{(1+|x|)^\alpha}\right)+\exp\left(-c_{12}t^{(2-\alpha)/(2+\alpha)}\right)\right)\\
&\le c_{11}t^{-d/2}\exp\left(-\frac{c_{12}|x-y|^2}{t}\right)\cdot\left(\exp\left(-\frac{c_{12}t}{(1+|y|)^\alpha}\right)+\exp\left(-c_{12}t^{(2-\alpha)/(2+\alpha)}\right)\right)\\
&\le c_{13}t^{-d/2}\exp\left(-\frac{c_{14}|x-y|^2}{t}\right)\cdot \exp\left(-\frac{c_{14}t}{(1+|y|)^\alpha}\right),
\end{align*}
where we have used the facts that $|x|\le |y|$ and $t\le C_0t_0(|y|)$.

Combining with both estimates for $I_1$ and $I_2$, we can prove \eqref{l3-3-1}.
\end{proof}

To consider off-diagonal lower bound estimates of $p(t,x,y)$ for $ 0<t\le C_0t_0(|y|)$, we further need to split it into two lemmas.

\begin{lemma}\label{l3-2}
Suppose that $d\ge 2$. For any positive constants $C_0$ and $C_0'$,  there exist positive constants $C_{7}$ and $C_{8}$ such that for all $0<t\le C_0t_0(|x|)$ and $|x-y|> 8C_0't^{{1}/{2}}$,
\begin{equation}\label{l3-2-1}
 p(t,x,y)\ge C_{7}t^{-{d}/{2}}\exp\left(-C_{8}\left(\frac{|x-y|^2}{t}+\frac{t}{(1+|y|)^\alpha}\right)\right).
\end{equation}
\end{lemma}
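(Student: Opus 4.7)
The plan is to prove Lemma~\ref{l3-2} by a chain argument that iterates the local lower bound of Lemma~\ref{l3-1} along a carefully chosen path from $x$ to $y$. Set $n := \lceil K\,|x-y|^2/t\rceil$ with $K$ chosen large enough that the step length $|x-y|/n$ stays strictly below $C_0'\sqrt{t/n}$, the scale tolerated by Lemma~\ref{l3-1}. Select intermediate points $z_0=x, z_1,\ldots, z_n=y$ equally spaced along a path $\gamma$: when $|x|$ is comparable with $|y|$ and the straight segment $[x,y]$ stays bounded away from the origin, take $\gamma=[x,y]$; otherwise, using $d\ge 2$, route $\gamma$ as a detour that first moves $x$ radially outward to the sphere of radius $\sim 1+|y|$ and then travels along that sphere to $y$, so that $|z_i|\ge|x|$ all along $\gamma$.

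The Chapman--Kolmogorov identity applied $n-1$ times together with Lemma~\ref{l3-1} on each factor then yields
\[
 p(t,x,y)\;\ge\;\int_{B_1}\!\!\cdots\!\int_{B_{n-1}}\prod_{i=1}^n p(t/n,w_{i-1},w_i)\,dw_1\cdots dw_{n-1}\;\ge\;c_0^{\,n}\,t^{-d/2}\exp\!\left(-C_2\,S\right),
\]
where $w_0:=x$, $w_n:=y$, $B_i:=B(z_i,\rho\sqrt{t/n})$ for a small $\rho$, the constant $c_0\in(0,1)$ absorbs $C_1$ from Lemma~\ref{l3-1} together with the ball volumes $|B_i|\asymp(t/n)^{d/2}$, and $S:=\sum_{i=1}^n (t/n)(1+|w_i|)^{-\alpha}$. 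The admissibility conditions of Lemma~\ref{l3-1} are checked by noting that $|w_{i-1}-w_i|\le C_0'\sqrt{t/n}$ is built into the construction, and that $t/n\le C_0 t_0(|w_i|)$ follows from the standing hypothesis $t\le C_0 t_0(|x|)$ together with $|w_i|\ge|x|$ along $\gamma$. Taking $n\asymp|x-y|^2/t$ gives $c_0^{\,n}\ge\exp(-c|x-y|^2/t)$, producing the Gaussian factor, while the leftover $(t/n)^{-d/2}$ is folded into $t^{-d/2}$ through $M^{d/2}e^{-cM}\lesssim e^{-c'M}$ with $M=|x-y|^2/t$.

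The main remaining task, and the key obstacle, is to prove
\[
 S\;\le\;C\left(\frac{t}{(1+|y|)^{\alpha}}+\frac{|x-y|^2}{t}\right).
\]
On the portion of $\gamma$ contained in $\{|w|\ge c(1+|y|)\}$ each summand is bounded by $C(t/n)(1+|y|)^{-\alpha}$, so the corresponding contribution is at most $C t/(1+|y|)^{\alpha}$, which accounts for the first term. The delicate piece is the transition portion along which $|w_i|$ can be much smaller than $1+|y|$: here the plan is to absorb the extra contribution into $|x-y|^2/t$ by exploiting that whenever a detour is actually needed one has $|x-y|\gtrsim 1+|y|$, and the standing hypothesis $t\le C_0(1+|x|)^{1+\alpha/2}$ then forces $|x-y|^2/t\gtrsim (1+|y|)^{1-\alpha/2}$, a quantity large enough to dominate the transition contribution uniformly for $\alpha\in(0,2)$. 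Carrying out this absorption cleanly across all of $\alpha\in(0,2)$ is the subtle part, and it is here that the geometric detour---only possible because $d\ge 2$---is crucial; the argument would break down in $d=1$, since no detour around the origin exists.
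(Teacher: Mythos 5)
Your proposal is correct in outline but takes a genuinely different route from the paper's proof. The paper crosses from $x$ to the sphere $\{|z|\asymp|y|\}$ in a \emph{single} step, via the exit-time identity \eqref{l2-4-2}, the explicit exit law \eqref{l2-5-1}, and the Dirichlet heat kernel estimates \eqref{l2-1-1} on one large ball of radius $\asymp|y|$, and then chains around the sphere with a \emph{uniformly bounded} number $N(y)\le c$ of balls of radius $\asymp|y|$. You instead run a fine-scale Aronson--Moser chain with $n\asymp|x-y|^2/t$ balls of radius $\asymp\sqrt{t/n}$, relying only on Lemma \ref{l3-1} and Chapman--Kolmogorov; this dispenses with the exit-distribution machinery entirely, at the price of the Riemann-sum estimate on $S$, which you rightly flag as the crux. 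That estimate does hold: with $\delta:=\sqrt{t/n}\asymp t/|x-y|$, the radial transit contributes $\asymp\delta\int_{|x|}^{|y|}(1+r)^{-\alpha}\,dr$, and since the detour is invoked only when $|x-y|\gtrsim 1+|y|$, the hypotheses $t\le C_0(1+|x|)^{1+\alpha/2}$ and $|x|\le|y|$ give $\delta\int_{|x|}^{|y|}(1+r)^{-\alpha}\,dr\lesssim |x-y|^2/t$ for every $\alpha\in(0,2)$ (for instance, when $\alpha>1$ the integral is $\asymp(1+|x|)^{1-\alpha}$ and $t^2(1+|x|)^{1-\alpha}\le C_0^2(1+|x|)^3\le C_0^2(1+|y|)^3\lesssim|x-y|^3$; the cases $\alpha\le 1$ are similar with $(1+|y|)^{1-\alpha}$ or a logarithm), while the spherical arc contributes $\lesssim\delta\,(1+|y|)^{1-\alpha}\lesssim t/(1+|y|)^\alpha$. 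One small inaccuracy: in the no-detour regime $|x-y|\le|y|/4$ the points on $[x,y]$ satisfy $|w|\ge|y|/2$, not $|w|\ge|x|$; this still gives both the $t_0$-admissibility and the potential bound, so the conclusion is unaffected.
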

\begin{proof}
Without loss of generality we assume that $|x|\ge 2$; otherwise, the desired estimate \eqref{l3-2-1} automatically holds, by
noting that $t_0(|x|)\le 2$ for every $|x|\le 2$ and
$p(t,x,y)\ge q(t,x,y)e^{-c_0t}$ for all $x,y\in \R^d$ and $t>0$, thanks to
\eqref{note-}.
The proof is split into three cases.

 {\bf Case 1: $|x-y|>|y|/4$ and $|x|\le {|y|}/{4}$.}\,\, Set $y_0:=\frac{|y|}{|x|}\cdot x$.  One can find $\kappa_0\in (0,1/4)$ small enough such that
for every $y'\in D_0:=B(y_0,\kappa_0|y|)$
and $x'\in B(x,|x|/8)$,
\begin{align*}
|y'-x'|&\ge |y_0-x|-|y'-y_0|-|x'-x |=|y|-|x|-|y'-y_0|-|x'-x |\\
&\ge |y|/4\ge |x-y|/8\ge  C_0't^{1/2},
\end{align*} where in the second inequality we have used the assumption that
$|x|\le |y|/4$.

Now for every $y'\in D_0$, let $U:=B\left(y', |y'-x|-{C_0't^{1/2}}/{4}\right)$ and
$W:=B(x,C_0't^{1/2})$. Hence, choosing $\kappa_0$ small enough if necessary,
\begin{equation}\label{l3-2-2}
|\partial U \cap W|\ge c_1t^{(d-1)/2}.
\end{equation} Furthermore, according to the definitions of $y_0$ and $D_0$, there exists $x^*\in  B(x,|x|/8)$ so that
$y'=\frac{|y'|}{|x^*|}\cdot x^*$. Then, for all $z\in U$,
\begin{equation}\label{l3-2-1a}
\begin{split}
  |z|\ge &|y'|-|z-y'|\ge |y'|-|y'-x|\ge |y'|-|y'-x^*|-|x^*-x|\\
  \ge &|y'|-(|y'|-|x^*|)-\frac{|x|}{8}\ge |x^*|-\frac{|x|}{8}\ge \frac{3| x|}{4},
\end{split}
\end{equation} where in the fifth inequality we used the fact that $|y'|\ge \frac{3|y|}{4}\ge \frac{9}{8}|x|\ge |x^*|$ thanks to $|x|\le \frac{|y|}{4}$ and $\kappa_0\in (0,1/4)$.
In particular, by \eqref{l3-2-1a}, it is obvious to verify that
\begin{equation}\label{l3-2-2a}
\sup_{u\in U}V(u)\le c_2(1+|x|)^{-\alpha}.
\end{equation}
By \eqref{l2-4-2},  for every $y'\in D_0$,
\begin{equation}\label{l3-2-3}
\begin{split}
p(t,x,y')
&=\Ee_{y'}\left[\exp\left(-\int_0^{\tau_U}V(B_s)\,ds\right)p\left(t-\tau_U,B_{\tau_U},x\right)\I_{\{\tau_U\le t\}}\right]\\
&\ge \Ee_{y'}\left[\exp\left(-\tau_U \cdot \sup_{u\in U}V(u)\right)p\left(t-\tau_U,B_{\tau_U},x\right)
\I_{\{\tau_U\le t,B_{\tau_U}\in W\}}\right]\\
&=\frac 1{2}\int_0^t \exp\left(-s\sup_{u\in U}V(u)\right)\left(\int_{\partial U \cap W}p(t-s,z,x)\frac{\partial q_U(s,y',\cdot)}{\partial n}(z)\,\sigma(dz)\right)\,ds\\
&\ge \frac{c_3|x-y'|}{t^{1/2}}\int_0^{t/2} e^{-\frac{c_4s}{(1+|x|)^\alpha}}(t-s)^{-d/2}e^{-\frac{c_4(t-s)}{(1+|x|)^\alpha}}
\left(\frac{t}{s}\right)^{d/2}e^{-\frac{c_4|x-y'|^2}{s}}s^{-1}\,ds\\
&\ge c_5 t^{-d/2}e^{-\frac{c_6t}{(1+|x|)^\alpha}}\int_0^{t/2}\exp\left(-c_6\left(\frac{s}{(1+|x|)^\alpha}+\frac{|x-y|^2}{s}\right)\right)s^{-1}\,ds.
\end{split}
\end{equation}
Here
the second equality is due to \eqref{l2-5-1},
the second inequality follows from
the facts that
$|x-y'|\ge C_0't^{1/2}$,
\eqref{l3-2-2}, \eqref{l2-1-1} and
\begin{align*}
\sup_{z\in W}p(t-s,z,x)\ge c_7(t-s)^{-d/2}e^{-\frac{c_8(t-s)}{(1+|x|)^\alpha}},\quad 0<s\le t/2,
\end{align*}
which can be deduced from \eqref{l3-1-1}, and the last inequality follows from the
facts that $$4(2+\kappa_0)|x-y|\ge (2+\kappa_0)|y|\ge |x|+|y'|\ge |x-y'|\ge \left(\frac{1}{4}-\kappa_0\right)|y|\ge \frac{1}{4} \left(\frac{1}{4}-\kappa_0\right)|x-y|$$ and ${|x-y|}\ge 8C_0't^{{1}/{2}}. $
Meanwhile, we deduce that
\begin{align*}
&\int_0^{{t}/{2}}\exp\left(-c_6\left(\frac{s}{(1+|x|)^\alpha}+\frac{|x-y|^2}{s}\right)\right)s^{-1}\,ds\\
&\ge
\begin{cases}
\displaystyle \int_{{t}/{4}}^{{t}/{2}}\exp\left(-c_6\left(\frac{s}{(1+|x|)^\alpha}+\frac{|x-y|^2}{s}\right)\right)s^{-1}\,ds,\ & t\le 4|x-y|(1+|x|)^{\alpha/2},\\
\displaystyle\int_{|x-y|(1+|x|)^{\alpha/2}}^{2|x-y|(1+|x|)^{\alpha/2}}\exp\left(-c_6\left(\frac{s}{(1+|x|)^\alpha}+\frac{|x-y|^2}{s}\right)\right)s^{-1}\,ds,\ & t> 4|x-y|(1+|x|)^{\alpha/2},\\
\end{cases}
\\
&\ge
\begin{cases}
c_{9}\exp\left(-c_{10}\left(\frac{t}{(1+|x|)^\alpha}+\frac{|x-y|^2}{t}\right)\right),\ & t\le 4|x-y|(1+|x|)^{\alpha/2},\\
c_{9}\exp\left(-\frac{c_{10}|x-y|}{(1+|x|)^{\alpha/2}}\right),\ & t> 4|x-y|(1+|x|)^{\alpha/2},\\
\end{cases}
\\
&\ge c_{11} \exp\left(-c_{12}\left(\frac{t}{(1+|x|)^\alpha}+\frac{|x-y|}{(1+|x|)^{\alpha/2}}+\frac{|x-y|^2}{t}\right)\right).
\end{align*}
Putting such estimate into \eqref{l3-2-3}, we obtain
\begin{align*}
p(t,x,y')&\ge
 c_{13}t^{- {d}/{2}}\exp\left(-c_{14}\left(\frac{t}{(1+|x|)^\alpha}+\frac{|x-y|}{(1+|x|)^{\alpha/2}}+\frac{|x-y|^2}{t}\right)\right).
\end{align*}
This, along with the fact
\begin{align*}
\frac{|x-y|}{(1+|x|)^{\alpha/2}}\le c_{15}\left(\frac{|x-y|^2}{t}+\frac{t}{(1+|x|)^\alpha}\right),
\end{align*}
implies that for any $0<t\le C_0t_0(|x|)$ and $y'\in D_0$,
\begin{equation}\label{l3-2-4}
\begin{split}
p(t,x,y')&\ge c_{16}t^{-d/2}\exp\left(-c_{17}\left(\frac{t}{(1+|x|)^\alpha}+\frac{|x-y|^2}{t}\right)\right)\\
&\ge c_{16}t^{-d/2}\exp\left(- \frac{c_{18}|x-y|^2}{t} \right)\ge c_{16}t^{-d/2}\exp\left(-c_{18}\left(\frac{t}{(1+|y|)^\alpha}+\frac{|x-y|^2}{t}\right)\right),
\end{split}
\end{equation} where in the second inequality we used the fact that $$\frac{|x-y|^2}{t}\ge \frac{c_{19}|y|^2}{t}\ge \frac{c_{19}|x|^2}{t}\ge \frac{c_{20}t}{(1+|x|)^\alpha}$$ since $|y-x|\ge |y|/4\ge |x|/4$ and $0<t\le C_0t_0(|x|)$.

Note that $d\ge 2$. According to the set $D_0=B(y_0,\kappa_0|y|)$ constructed above,
we can find a series of balls $D_k=B(y_i,\kappa_0|y|)$, $1\le k \le 1+N(y)$, such that
the following statements hold:
\begin{itemize}
\item [(i)] $y\in D_{1+N(y)}$;

\item [(ii)]
\begin{equation}\label{l3-2-5}
\sup_{y\in \R^d} N(y)\le c_{21};
\end{equation}

\item [(iii)]
\begin{equation}\label{l3-2-5a}
c_{22}|y|\ge |z_k-z_{k+1}|\ge c_{23}|y|\ge \frac{c_{23}|x-y|}{2} \hbox{ for all } z_k\in D_k \hbox{ and } 0\le k \le N(y);
\end{equation}

\item [(iv)] There exists $\kappa_1>0$ such that for every $0\le k \le N(y)$ we can find $y_k'\in \R^d$ so that
\begin{equation}\label{l3-2-6}
\begin{split}
 D_k\cup D_{k+1}\subset B\left(y_k',\kappa_1|y|\right)\hbox{ and }
 |z|\ge
 4c_{22}
 |y|\hbox{ for all } z\in  B\left(y_k',\kappa_1|y|\right),
\end{split}
\end{equation}
where $c_{22}$ is the constant given in \eqref{l3-2-5a} (that can be achieved by taking $\kappa_0$ small enough).
\end{itemize}

According to
\eqref{l3-2-5a} and \eqref{l3-2-6}, we know that $t \le C_0t_0(|x|)\le c_{24}t_0(|z_k|)$ and
$|z_k-z_{k+1}|\ge 2c_{25}t^{1/2}$ for all $z_k\in D_k$ and $0\le k \le N(y)$. Hence, by \eqref{l3-2-6} (which ensures
that $|z|\ge c_{26}|y|$ for every $z\in B\left(z_k,|z_k-z_{k+1}|-c_{25}t^{1/2}\right)$), we can apply the  argument of
\eqref{l3-2-4}
(with $U=B\left(z_k,|z_k-z_{k+1}|-c_{25}t^{1/2}\right)$ and $W=B\left(z_{k+1},2c_{25}t^{1/2}\right)$)
to obtain that for every $0< t \le C_0t_0(|x|)$, $ z_k\in D_k$ and $0\le k \le N(y)$,
\begin{equation}\label{l3-2-7}
\begin{split}
&  p\left(\frac{t}{2(N(y)+1)},z_k,z_{k+1}\right)\\
&\ge c_{27}\left(\frac{t}{2(N(y)+1)}\right)^{-d/2}
\exp\left(-c_{28}\left( \frac{2(N(y)+1)|z_k-z_{k+1}|^2}{t}+\frac{t}{2N(y)(1+|y|)^\alpha}\right)\right)\\
&\ge c_{29}t^{-d/2}\exp\left(-c_{30}\left( \frac{|x-y|^2}{t}+\frac{t}{(1+|y|)^\alpha}\right)\right),
\end{split}
\end{equation}
where the second inequality follows from  \eqref{l3-2-5}, \eqref{l3-2-5a} and the fact that $|x-y|\ge |y|/4$.

Combining this with \eqref{l3-2-4} yields that
\begin{align*}
& p(t,y,x)\\
&\ge \int_{D_{N(y)}}\cdots \int_{D_0}p\left(\frac{t}{2(N(y)+1)},y,z_{N(y)}\right)\left(\prod_{i=1}^{N(y)}
p\left(\frac{t}{2(N(y)+1)},z_{i},z_{i-1}\right)\right)p(t/2,z_0,x)\prod_{i=0}^{N(y)}dz_i\\
&\ge c_{31}\left(\prod_{i=0}^{N(y)}|D_i|\right)\cdot \left(t^{-d/2}\exp\left(-c_{32}\left(\frac{|x-y|^2}{t}+\frac{t}{(1+|y|)^\alpha}\right)\right)\right)^{N(y)+2} \\
&\ge  c_{33} t^{-d/2}
\left(\frac{|y|^2}{t}\right)^{\frac{d(N(y)+1)}{2}}\cdot \exp\left(-c_{34}(N(y)+2)\left(\frac{|x-y|^2}{t}+\frac{t}{(1+|y|)^\alpha}\right)\right)\\
&\ge c_{35}t^{-d/2}\exp\left(-c_{36}\left(\frac{|x-y|^2}{t}+\frac{t}{(1+|y|)^\alpha}\right)\right),
\end{align*}
where the last inequality follows from \eqref{l3-2-5} and the facts that $|y|\ge |x-y|/2\ge 4C_0't^{1/2}$ and $|y|\ge |x|\ge 2$. Therefore, we have finished
the proof for {\bf Case 1}.

{\bf Case 2: $|x-y|>|y|/4$ and $|x|>|y|/4$.}\,\, As explained in {\bf Case 1}, since $d\ge 2$, for every $|x-y|>|y|/4$ and $|x|>|y|/4$, we can find
a series of balls $D_k$, $0\le k \le N(y)+1$, such that $x\in D_0$, $y\in D_{N(y)+1}$  and the properties
\eqref{l3-2-5}--\eqref{l3-2-6} hold.
Using them and repeating the procedure in {\bf Case 1}, we know
\eqref{l3-2-7} still holds. Hence,
\begin{align*}
& p(t,y,x)\\
&\ge \int_{D_{N(y)}}\cdots \int_{D_1}p\left(\frac{t}{ N(y)+1 },y,z_{N(y)}\right)\left(\prod_{i=2}^{N(y)}
p\left(\frac{t}{N(y)+1},z_{i},z_{i-1}\right)\right)p\left(\frac{t}{N(y)+1},z_1,x\right)\prod_{i=1}^{N(y)}dz_i\\
&\ge c_{1}\left(\prod_{i=1}^{N(y)}|D_i|\right)\cdot \left(t^{-d/2}\exp\left(-c_{2}\left(-\frac{|x-y|^2}{t}+\frac{t}{(1+|y|)^\alpha}\right)\right)\right)^{N(y)+1} \\
&\ge  c_{3}t^{-d/2} \left(\frac{|y|^2}{t}\right)^{\frac{dN(y)}{2}}\cdot \exp\left(-c_{4}(N(y)+1)\left(-\frac{|x-y|^2}{t}+\frac{t}{(1+|y|)^\alpha}\right)\right)\\
&\ge c_{5}t^{-d/2}\exp\left(-c_{6}\left(-\frac{|x-y|^2}{t}+\frac{t}{(1+|y|)^\alpha}\right)\right).
\end{align*}
Here in the last inequality we have also used the facts that $N(y)\le c_{7}$ and $|y|\ge |x-y|/2\ge 4C_0't^{1/2}$. Thus, we prove the assertion for {\bf Case 2}.

{\bf Case 3: $|x-y|\le |y|/4$.}\,\, In this case, we just take $U:=B\left(y,|x-y|-C_0't^{1/2}\right)$ and $W:=B\left(x,2C_0't^{1/2}\right)$.
It is easy to see that
$$
|z|\ge c_1|y|,\quad z\in U.
$$
Applying this and the argument of \eqref{l3-2-4}, we can prove the desired conclusion.
\end{proof}

\begin{lemma}\label{l3-4}
Suppose that $d\ge 2$. Then, for any $C_0'>0$,  there exist constants $C_0$,  $C_9$ and $C_{10}>0$ so that for all $x,y\in \R^d$ and  $C_0t_0(|x|)<t\le C_0't_0(|y|)$,
\begin{equation}\label{l3-4-1}
 p(t,x,y)\ge C_{9}t^{-{d}/{2}}\exp\left(-C_{10}\left(\frac{|x-y|^2}{t}+\frac{t}{(1+|y|)^\alpha}\right)\right).
\end{equation}
\end{lemma}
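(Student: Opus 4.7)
My plan is to reduce to an off–diagonal problem and then to run a chain argument that is parallel in spirit to the proof of Lemma \ref{l3-2}, but with the chain of balls now living in the ``far from origin'' region $\{|z|\gtrsim R(t)\}$, where $R(t):=t^{2/(2+\alpha)}$ is the spatial scale determined by the transition time. The standing hypothesis $C_0 t_0(|x|)<t\le C_0' t_0(|y|)$ forces $1+|x|\lesssim R(t)\lesssim 1+|y|$, and by choosing the constant $C_0$ in the conclusion sufficiently large the ratio $(1+|x|)/R(t)$ can be made as small as we wish. The first reduction handles the near–diagonal case $|x-y|\le 2C_0'\sqrt{t}$: since $t\le C_0' t_0(|y|)$, Lemma \ref{l3-1} (applied with constant $C_0'$) gives $p(t,x,y)\ge c_1 t^{-d/2}\exp(-c_2 t/(1+|y|)^\alpha)$, and $|x-y|^2/t\le 4(C_0')^2$ then yields the claim.

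For the genuinely off–diagonal case $|x-y|>2C_0'\sqrt{t}$, I split according to whether $|x|$ is comparable to $|y|$. If $|x|\ge c_*|y|$ for some fixed $c_*>0$, then $t_0(|x|)\asymp t_0(|y|)\ge t/C_0'$, and Lemma \ref{l3-2}, invoked with its free constant chosen large enough (namely $\ge C_0'/c_*$), directly delivers the desired estimate without any further work. So the substantive case is $|x|\ll |y|$, in which $|x-y|\ge |y|/2\gtrsim R(t)$, and hence the Gaussian factor already satisfies $|x-y|^2/t\gtrsim R(t)^2/t=t^{(2-\alpha)/(2+\alpha)}$.

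To attack this substantive case I would construct, exactly as in Case 1 of Lemma \ref{l3-2}, an anchor point $y_0$ with $|y_0|=|y|$ lying on the ray through $x$, a small ball $D_0=B(y_0,\kappa_0|y|)$, and a bounded sequence of balls $D_0,D_1,\dots,D_{N(y)+1}$ whose centers lie on (or near) the $|y|$–sphere, satisfy $|z|\ge c|y|\ge c R(t)$ uniformly on each $D_k$, and have consecutive centers at distance $\lesssim |y|$. The chain from $y$ to $D_0$ along this sphere is handled by applying Lemma \ref{l3-2} on each step of length $t/(N(y)+1)$: since each step time is $\le C_0 t_0(|z_k|)\asymp t_0(|y|)\ge t/C_0'$, the lemma's time constraint is satisfied (after enlarging its constant). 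The per–step potential contribution is $\lesssim s_i/(1+|y|)^\alpha$, summing to $\lesssim t/(1+|y|)^\alpha$, and the per–step Gaussian contribution sums to $\lesssim |y|^2/t\lesssim |x-y|^2/t$. The final step from $D_0$ to $x$ is produced by an exit–time argument from a set $U$ whose closure is required to be contained in $\{|u|\gtrsim R(t)\}$ rather than in $\{|u|\gtrsim |x|\}$, so that $\sup_U V\lesssim R(t)^{-\alpha}$ and the exponential cost contributed by this step is $\lesssim t\cdot R(t)^{-\alpha}=t^{(2-\alpha)/(2+\alpha)}$. Combined with the on–diagonal estimate of Lemma \ref{l3-1} on the small portion of the trajectory that eventually lands in $B(x,C_0' s^{1/2})$, this delivers a lower bound of the form $c t^{-d/2}\exp(-C|x-y|^2/t-Ct/(1+|y|)^\alpha-Ct^{(2-\alpha)/(2+\alpha)})$. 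Finally, the extra $t^{(2-\alpha)/(2+\alpha)}$ is absorbed because $|x-y|^2/t\ge c t^{(2-\alpha)/(2+\alpha)}$ in this regime, so, after enlarging the constant $C_{10}$, the target \eqref{l3-4-1} follows.

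The main obstacle I anticipate is precisely in the last step of the chain: the natural choice $U=B(y_0,|y_0-x|-C_0' s^{1/2}/4)$ used in Lemma \ref{l3-2} has radius $\approx |y|$ and hence contains points arbitrarily close to the origin, so that the naive bound $\sup_U V\lesssim (1+|x|)^{-\alpha}$ produces an exponential cost of order $t/(1+|x|)^\alpha$, which is catastrophic once $t\gg t_0(|x|)$. The fix must be to force $U$ to stay in the region $\{|u|\gtrsim R(t)\}$: this requires either shrinking the radius of $U$ (at the price of the exit point no longer landing near $x$, which then must be recovered by inserting one or two extra balls bridging $R(t)$–shell and $x$) or refining the strong Markov/integral computation as in \eqref{l3-2-3} so that only the trajectory's time spent outside $\{|u|\gtrsim R(t)\}$ is counted in the potential exponent. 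Making this replacement rigorous while keeping both the number of chain links and the multiplicative constants bounded (so that nothing like $c^{N(y)}$ grows with $|y|/|x|$) is, in my view, the subtle point; it should however succeed because, thanks to the choice of large $C_0$, $|x|$ is a priori a very small fraction of $R(t)$, so a fixed finite number of additional exit balls of geometrically decreasing scale between $R(t)$ and $|x|$ suffices.
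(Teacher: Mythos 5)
You have correctly identified the crux of the problem: bridging the inner point $x$ to the shell $\{|u|\asymp R(t)\}$, where $R(t)=t^{2/(2+\alpha)}$, when $1+|x|\ll R(t)$. But the claim that ``a fixed finite number of additional exit balls of geometrically decreasing scale between $R(t)$ and $|x|$ suffices'' is wrong, and the reasoning offered for it is backwards. Under the standing hypothesis $t>C_0t_0(|x|)$ one only knows $R(t)/(1+|x|)>C_0^{2/(2+\alpha)}$; this ratio is \emph{not} bounded above by anything depending on $C_0$ or $C_0'$ alone (fix $|x|$ and let $t\to\infty$ with $|y|$ growing accordingly). A geometric chain of balls across $[\,1+|x|,\,R(t)\,]$ therefore has $\asymp\log\bigl(R(t)/(1+|x|)\bigr)\asymp\log t$ links, a quantity that must be allowed to grow. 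Enlarging $C_0$ makes $|x|$ a smaller fraction of $R(t)$, which \emph{increases} the number of required links rather than decreasing it, so the ``fixed finite number'' claim cannot be rescued, and you give no mechanism to control the resulting product of per-step constants $c^{N}$ or the accumulated potential/Gaussian cost along those $N\asymp\log t$ steps.

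The paper's proof supplies exactly the missing accounting, and in a cleaner architecture (no spherical detour is needed: the chain emanates radially from $x$ and the final hop to $y$ is handled by Lemmas \ref{l3-1}--\ref{l3-2}). One takes $x_0=x$, $R_0=1+|x|/2$, $R_k=(1+\delta_0)R_{k-1}$, $x_k=(1+\sum_{i<k}R_i)\,x/|x|$, with $D_k=B(x_k,\kappa_0 R_k)$, and stops at the smallest $N(x)$ with $|x_{N(x)}|\ge 2(t/C_0)^{2/(2+\alpha)}$; then $N(x)\lesssim\log(1+t)$. Each step is given time $\asymp R_k^{1+\alpha/2}$ (the local transition time), so that both the potential cost $\asymp R_k^{1+\alpha/2}\cdot R_k^{-\alpha}$ and the Gaussian cost $\asymp R_k^2/R_k^{1+\alpha/2}$ in the exit-time integral are $\asymp R_k^{1-\alpha/2}$, giving a recursion $Q_k\ge c_{11}\exp(-c_{12}R_k^{1-\alpha/2})Q_{k+1}$. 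Since $\alpha\in(0,2)$, the exponent $1-\alpha/2>0$ and the radii grow geometrically, so $\sum_k R_k^{1-\alpha/2}\lesssim R_{N(x)}^{1-\alpha/2}\asymp t^{(2-\alpha)/(2+\alpha)}$; and $c_{11}^{N(x)}=\exp(-c\,N(x))$ with $N(x)\lesssim\log t$ is likewise $\ge\exp(-c' t^{(2-\alpha)/(2+\alpha)})$. In the substantive regime $|x|\ll|y|$ one has $|x-y|^2/t\gtrsim|y|^2/t\gtrsim t^{(2-\alpha)/(2+\alpha)}$, so all of this is absorbed into the Gaussian term. Without this three-way matching --- per-step time $R_k^{1+\alpha/2}$, per-step cost $\exp(-cR_k^{1-\alpha/2})$, and absorption of both $\sum_k R_k^{1-\alpha/2}$ and the $\log t$ overhead into $|x-y|^2/t$ --- your argument does not close, so as written the proposal has a genuine gap at precisely the point you flagged as ``the subtle point''.
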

\begin{proof}
Without loss of generality we assume that $t\ge 2$, $C_0't_0(|y|)\ge 4C_0t_0(|x|)$ and $|y|\ge 4|x|$; otherwise, by \eqref{e1-4} and $t_0(|x|)\simeq t_0(|y|)$, the desired conclusion holds true, thanks to \eqref{l3-2-1}.

Below, we let $x\neq 0$ first. Note that
$t>C_0t_0(|x|)=C_0(1+|x|)^{{(\alpha+2)}/{2}}$, so $|x|\le \left({t}/{C_0}\right)^{{2}/({2+\alpha})}$.
For any $\delta_0>0$, define
\begin{align*}
x_0=x,\ R_0=1+\frac{|x|}{2},\ x_k:=\left(1+\sum_{i=0}^{k-1}R_i\right) \frac{x}{|x|},\ R_k:=(1+\delta_0)R_{k-1},\quad 1\le k \le N(x),
\end{align*}
where
$N(x)$ is the smallest positive integer (which may depend on $x$)
such that $|x_{N(x)}|\ge 2\left({t}/{C_0}\right)^{{2}/({2+\alpha})}$.
It is not difficult
to verify that
\begin{equation}\label{l3-4-2}
N(x)\le c_1\log (1+t),\ R_{N(x)}\simeq t^{2/(2+\alpha)},\quad \ x\in \R^d,
\end{equation} and, by taking $C_0>0$ large enough we have $$2\sum_{i=0}^{N(x)-1}R_i^{1+\alpha/2}\le t/2.$$
Furthermore, note that
\begin{align*}
\sum_{i=0}^{k-1}R_i\ge \frac{c_2}{\delta_0}\left(1-\frac{1}{(1+\delta_0)^k}\right)R_k,\quad k\ge 1,
\end{align*}
which ensures that, by choosing $\delta_0$ small enough, it holds that $|z|\ge c_3R_k$ for every $z\in B(x_k,R_k)$ and $k\ge k_0$ with some $k_0>0$
(also thanks to $\lim_{\delta_0 \downarrow 0}\delta_0^{-1}{\left(1-{(1+\delta_0)^{-k}}\right)}{}=k$).
Then, there exist positive constants $\delta_0$, $\kappa_0$ small enough such that for $D_k:=B\left(x_{k},\kappa_0R_{k}\right)$,
the following properties hold:
\begin{itemize}
\item [(i)]
\begin{equation}\label{l3-4-3}
|\partial B(z,R_{k})\cap D_{k+1}|\ge c_4R_{k}^{d-1},\quad z\in D_k,\ 1\le k \le N(x);
\end{equation}

\item [(ii)]
\begin{equation}\label{l3-4-4}
\sup_{u\in B(z,R_k)}V(u)\le c_5 R_k^{-\alpha},\quad\ z\in D_k,\, 1\le k \le N(x).
\end{equation}
\end{itemize}

Let $t_0:=t$ and $Q_0(t,y):=p(t,x,y)$. For each $1\le k \le N(x)$, define $$t_k:=t-2\sum_{i=0}^{k-1}R_i^{1+{\alpha}/{2}},\quad Q_k(t,y):=\inf_{s\in [t_k,t],z\in D_k}p(s,z,y)$$
Then, according to \eqref{l2-4-2},
for each $1\le k \le N(x)-1$, $t_k\le r \le t$ and $u\in D_k$, setting $U = B(u,R_k)$,
\begin{align*}
& p(r,u,y)\\
&\ge \Ee_u\left[\exp\left(-\tau_U \sup_{z\in U}V(z)\right)p\left(r-\tau_U,B_{\tau_U},y\right)\right]\\
&=\frac 1{2}\int_0^r \exp\left(-s \sup_{z\in U}V(z)\right)\cdot\left(\int_{\partial U}p(r-s,z,y)\frac{\partial q_U(s,u,\cdot)}{\partial n}(z)\,\sigma(dz)\right)\,ds\\
&\ge \frac 1{2}\int_0^r \exp\left(-\frac{c_6s}{R_k^\alpha} \right)\cdot\left(\int_{\partial U\cap D_{k+1}}p(r-s,z,y)\frac{\partial q_U(s,u,\cdot)}{\partial n}(z)\,\sigma(dz)\right)\,ds\\
&\ge c_7\bigg(\int_{R_k^{1+{\alpha}/{2}}}^{2R_k^{1+{\alpha}/{2}}} \exp\left(-\frac{c_8s}{R_k^\alpha} \right)
\frac{R_k^{d-1} R_k}{s^{d/2+1}}   \exp\left(-c_8\left(\frac{R_k^2}{s}+\frac{s}{R_k^2}\right)\right)\,ds\bigg)\cdot \inf_{s\in [r-2R_k^{1+\alpha/{2}},r],z\in D_{k+1}}p(s,z,y)\\
&\ge c_9Q_{k+1}(t,y)\int_{R_k^{1+{\alpha}/{2}}}^{2R_k^{1+{\alpha}/{2}}}
\exp\left(-c_{10}\left(\frac{R_k^2}{s}+\frac{s}{R_k^2}+\frac{s}{R_k^\alpha}\right)\right)\left(\frac{R_k^2}{s}\right)^{d/2}s^{-1}\,ds\\
&\ge c_{11}Q_{k+1}(t,y)\exp\left(-c_{12}R_k^{1-\alpha/{2}}\right).
\end{align*}
Here the second inequality is due to \eqref{l3-4-4}, the third inequality follows from \eqref{l2-1-1}, \eqref{l3-4-3}
and the fact $t_k\ge 2R_k^{1+\alpha/{2}}$, and
in the fourth inequality we have used the fact $r-2R_k^{1+\alpha/{2}}\ge t_{k+1}$ for every $r\ge t_k$.
Hence,
\begin{align}\label{l3-4-5}
Q_k(t,y)&=\inf_{s\in [t_k,t],u\in D_{k}}p(r,u,y)\ge c_{11}\exp\left(-c_{12}R_k^{1-\alpha/{2}}\right)Q_{k+1}(t,y),
\end{align}
where the constants $c_{11}$ and $c_{12}$ are independent of $k$.

Note that, by the definitions of $R_k$ and $N(x)$, we have
$t_{N(x)}=t-2\sum_{i=0}^{N(x)-1} R_i^{1+\alpha/{2}}\ge t/2$, and so
\begin{align*}
c_{13}t^{{2}/({\alpha+2})}\le |z|\le c_{14}t^{{2}/({\alpha+2})},\quad z\in D_{N(x)}=B\left(x_{N(x)},\kappa_0R_{N(x)}\right).
\end{align*}
This, along with the facts that $t\le C_0't_0(|y|)=C_0'(1+|y|)^{1+\alpha/{2}}$ and $|y|\ge 4|x|$,
yields that
$$t\le c_{15}t_0(|z|),\ |y-z|\le c_{16}|y|\le c_{17}|x-y|,\quad z\in D_{N(x)}.$$
Therefore, according to this, \eqref{l3-1-1} and \eqref{l3-2-1},
\begin{align*}
\inf_{s\in [t/2,t],z\in D_{N(x)}}p(s,z,y)&\ge c_{18}t^{-d/2}\exp\left(-c_{19}\left(\frac{|x-y|^2}{t}+\frac{t}{(1+|y|)^\alpha}\right)\right).
\end{align*}

Putting this into \eqref{l3-4-5} yields that
\begin{align*}
p(t,x,y)
&=Q_0(t,y)\ge c_{11}^{N(x)}\exp\left(-c_{12}\sum_{k=0}^{N(x)-1}R_k^{1-\alpha/{2}}\right)\inf_{s\in [t_{N(x)},t],z\in D_{N(x)}}p(s,z,y)\\
&\ge c_{20}t^{-d/2}\exp\left(-c_{21}\left(N(x)+R_{N(x)}^{1-\alpha/{2}}\right)\right)\exp\left(-c_{21}\left(\frac{|x-y|^2}{t}+\frac{t}{(1+|y|)^\alpha}\right)\right)\\
&\ge c_{22}t^{-d/2}\exp\left(-c_{23}t^{({2-\alpha})/({2+\alpha)}}\right)\exp\left(-c_{23}\left(\frac{|x-y|^2}{t}+\frac{t}{(1+|y|)^\alpha}\right)\right)\\
&\ge c_{24}t^{-d/2}\exp\left(-c_{25}\left(\frac{|x-y|^2}{t}+\frac{t}{(1+|y|)^\alpha}\right)\right).
\end{align*}
Here in the second inequality we have used
\begin{align*}
\sum_{k=0}^{N(x)-1}R_k^{1-\alpha/{2}}\le c_{26}R_{N(x)}^{1-\alpha/{2}},
\end{align*}
which can be verified by the facts that $R_{k+1}=(1+\delta_0)R_k$ and $\alpha\in (0,2)$, the third inequality is due to \eqref{l3-4-2}, and in the last inequality we have used
\begin{align*}
t^{(2-\alpha)/(2+\alpha)}\le \frac{c_{28}|y|^2}{t}\le \frac{c_{29}|x-y|^2}{t},\quad 0<t\le C_0't_0(|y|),\ |y|\ge 4|x|.
\end{align*}
The proof of the desired conclusion \eqref{l3-4-1} when $x\neq 0$ is finished.

Furthermore, for any $y\in \R^d$ and  $C_0t_0(0)<t\le C_0't_0(|y|)$ with $C_0$ large enough, it holds that
\begin{align*}p(t,0,y)\ge \int_{\{1\le |z|\le 2\}} p(1,0,z)p(t-1,z,y)\,dy.\end{align*} This along with the assertion above and Lemma \ref{l3-1} yields the desired conclusion \eqref{l3-4-1} when $x=0$. Then the proof is completed.
\end{proof}

Summarising Lemma \ref{l3-1}--Lemma \ref{l3-4}, we have the following two-sided estimates for $p(t,x,y)$ when $0<t\le C_0t_0(|y|)$.
\begin{proposition}\label{p3-1}
For every $C_0>0$, there exists $C_i>0$, $i=11,12,13,14$, so that for all $x,y\in \R^d$ with $|x|\le |y|$ and all $0<t\le C_0t_0(|y|)$,
\begin{equation}\label{p3-1-1}
\begin{split}
& p(t,x,y)\ge C_{11}t^{-d/2}\exp\left(-C_{12}\left(\frac{t}{(1+|y|)^\alpha}+\frac{|x-y|^2}{t}\right)\right),\\
& p(t,x,y)\le C_{13}t^{-d/2}\exp\left(-C_{14}\left(\frac{t}{(1+|y|)^\alpha}+\frac{|x-y|^2}{t}\right)\right).
\end{split}
\end{equation}
\end{proposition}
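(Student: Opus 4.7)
The plan is to combine Lemmas \ref{l3-1}--\ref{l3-4} by partitioning the parameter region $\{(t,x,y): |x|\le|y|,\ 0<t\le C_0 t_0(|y|)\}$ according to the relative sizes of $|x-y|$, $t^{1/2}$ and, for the lower bound, $t_0(|x|)$. Throughout, the standing assumption $|x|\le|y|$ entails $t_0(|x|)\le t_0(|y|)$, which is what allows the $t_0(|x|)$-threshold regime to fit inside the declared range.

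For the upper bound I would fix an auxiliary threshold $C_0'>0$ and split into two cases. When $|x-y|\le 2C_0' t^{1/2}$, Lemma \ref{l3-1} supplies $p(t,x,y)\le C t^{-d/2}\exp(-Ct/(1+|y|)^\alpha)$; since then $|x-y|^2/t$ is bounded, the extra Gaussian factor $\exp(-C_{14}|x-y|^2/t)$ in \eqref{p3-1-1} may be absorbed into the multiplicative constant. When $|x-y|>2C_0' t^{1/2}$, Lemma \ref{l3-3} applies verbatim and already has the form \eqref{p3-1-1}.

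For the lower bound the plan is slightly more delicate. I would first invoke Lemma \ref{l3-4} with its parameter $C_0'$ taken to equal the Proposition's $C_0$; this produces a transition threshold, call it $\tilde C_0$, such that \eqref{l3-4-1} is valid whenever $\tilde C_0 t_0(|x|)<t\le C_0 t_0(|y|)$. With $\tilde C_0$ fixed, I would then partition the remaining range into three subregions. If $|x-y|\le 8C_0' t^{1/2}$, the on-diagonal bound in Lemma \ref{l3-1} yields the estimate, and since $|x-y|^2/t$ is bounded, the Gaussian factor $\exp(-C_{12}|x-y|^2/t)$ can be inserted by shrinking the prefactor. If $|x-y|>8C_0' t^{1/2}$ and $t\le \tilde C_0 t_0(|x|)$, Lemma \ref{l3-2} (invoked with constant $\tilde C_0$) gives the claim directly. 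If $|x-y|>8C_0' t^{1/2}$ and $\tilde C_0 t_0(|x|)<t\le C_0 t_0(|y|)$, Lemma \ref{l3-4} applies directly. Taking the minimum of the constants produced in the three subregions delivers a uniform lower bound of the form \eqref{p3-1-1}.

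The argument is essentially a reorganisation and I do not expect any substantive obstacle. The only point requiring care is the bookkeeping of thresholds: the constant appearing in the hypothesis of Lemma \ref{l3-2} must be matched with the $\tilde C_0$ produced by Lemma \ref{l3-4} so that the regimes $\{t\le \tilde C_0 t_0(|x|)\}$ and $\{\tilde C_0 t_0(|x|)<t\le C_0 t_0(|y|)\}$ tile the entire range exactly, with compatible constants across the overlapping boundaries. Because each of Lemmas \ref{l3-1}--\ref{l3-4} is stated for arbitrary positive auxiliary constants, such matching is always possible, and the proposition follows by collecting the worst multiplicative constant and the largest exponent in each regime.
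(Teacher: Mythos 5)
Your proposal is correct and follows the same route the paper intends: the paper states Proposition~\ref{p3-1} as a direct summary of Lemmas~\ref{l3-1}--\ref{l3-4}, and your partitioning of the parameter region — on-diagonal via Lemma~\ref{l3-1} (with the Gaussian factor absorbed when $|x-y|^2/t$ is bounded), the off-diagonal upper bound via Lemma~\ref{l3-3}, and the off-diagonal lower bound split at the threshold produced by Lemma~\ref{l3-4} with Lemma~\ref{l3-2} covering $t\le \tilde C_0 t_0(|x|)$ — is exactly the intended argument. The only bookkeeping point, which you already flag, is to choose the auxiliary constants so that the regimes of Lemmas~\ref{l3-1} and~\ref{l3-3} (and of~\ref{l3-2} and~\ref{l3-4}) tile the range; since those lemmas hold for arbitrary positive auxiliary constants, this is immediate.
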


\subsection{Proof of Theorem \ref{t1-1}: the case that
$t>C_0t_0(\max\{|x|,|y|\})$
} We still assume that $|x|\le |y|$ in this part. The main result of this part is as follows.
\begin{proposition}\label{p3-2}
For every $C_0>0$, there exist constants $C_i$, $i=1,2,3,4$, so that for all $x,y\in \R^d$ with $|x|\le |y|$ and $t>C_0t_0(|y|)$,
\begin{equation}\label{p3-2-1}
\begin{split}
C_1\exp\left(-C_2t^{(2-\alpha)/(2+\alpha)}\right)\le p(t,x,y)\le
C_3\exp\left(-C_4t^{(2-\alpha)/(2+\alpha)}\right).
\end{split}
\end{equation}
\end{proposition}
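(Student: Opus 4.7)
The plan is to prove both bounds by the semigroup identity
$p(t,x,y)=\int_{\R^d} p(t/2,x,z)\,p(t/2,z,y)\,dz$,
combining it with Lemma \ref{l2-2} (for the upper bound) and Proposition \ref{p3-1} (for the lower bound). Throughout, assume $|x|\le|y|$ and $t>C_0t_0(|y|)$; in particular $|x|\le|y|\le c\,t^{2/(2+\alpha)}$ and $t\ge C_0$.

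For the upper bound, since $V\ge 0$ we have $p(t/2,z,y)\le q(t/2,z,y)\le c\,t^{-d/2}$, so
$p(t,x,y)\le c\,t^{-d/2}\int p(t/2,x,z)\,dz=c\,t^{-d/2}T^V_{t/2}1(x)$.
Because $t/2>(C_0/2)t_0(|x|)$, the exponent $t/(1+|x|)^\alpha$ dominates $t^{(2-\alpha)/(2+\alpha)}$ (by the very definition of the threshold $t_0(|x|)$), hence both exponential terms in Lemma \ref{l2-2} are bounded by $\exp(-c\,t^{(2-\alpha)/(2+\alpha)})$. This yields $p(t,x,y)\le c\,t^{-d/2}\exp(-c\,t^{(2-\alpha)/(2+\alpha)})$; since $t$ is bounded below and $t^{(2-\alpha)/(2+\alpha)}\to\infty$, the polynomial prefactor is absorbed by slightly shrinking the constant in the exponent, producing the claimed bound.

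For the lower bound, pick a unit vector $e\in\R^d$, a constant $\kappa>0$ to be fixed, and set $R:=\kappa\,t^{2/(2+\alpha)}$, $z^*:=Re$, $B:=B(z^*,\sqrt{t})$. Since $\alpha\in(0,2)$, for $t$ in our range we have $\sqrt t\le R/2$, so every $z\in B$ satisfies $|z|\asymp R\asymp t^{2/(2+\alpha)}$. Choosing $\kappa$ large enough we guarantee simultaneously (i)~$|z|\ge\max(|x|,|y|)$ for all $z\in B$ (using $|y|\le c\,t^{2/(2+\alpha)}$), and (ii)~$t/2\le C_0 t_0(|z|)$, since $t_0(|z|)\asymp R^{1+\alpha/2}\asymp\kappa^{1+\alpha/2}t$. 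Under these conditions Proposition \ref{p3-1} applies to $p(t/2,x,z)$ and, by symmetry, to $p(t/2,y,z)=p(t/2,z,y)$. Using $(1+|z|)^\alpha\asymp R^\alpha\asymp t^{2\alpha/(2+\alpha)}$ and $|x-z|,|y-z|\le c\,R$, the two key quantities in Proposition \ref{p3-1} satisfy $t/(1+|z|)^\alpha\asymp t^{(2-\alpha)/(2+\alpha)}$ and $|x-z|^2/t,\,|y-z|^2/t\le c\,t^{(2-\alpha)/(2+\alpha)}$. Consequently, for each $z\in B$,
$p(t/2,x,z)\,p(t/2,z,y)\ge c\,t^{-d}\exp(-c'\,t^{(2-\alpha)/(2+\alpha)})$,
and integrating over $B$ (whose volume is of order $t^{d/2}$) yields $p(t,x,y)\ge c\,t^{-d/2}\exp(-c'\,t^{(2-\alpha)/(2+\alpha)})$; the $t^{-d/2}$ is again absorbed.

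The main technical point is calibrating $\kappa$ so that all hypotheses of Proposition \ref{p3-1} hold on both halves simultaneously: $|z^*|$ must dominate $|x|$ and $|y|$, while $t_0(|z^*|)$ must dominate $t/2$. The scale $|z^*|\asymp t^{2/(2+\alpha)}$ is precisely where the local term $t/(1+|z|)^\alpha$ and the Gaussian diffusive term $|x-z|^2/t$ balance against each other, both producing the same order $t^{(2-\alpha)/(2+\alpha)}$; this is what makes the integrated bound tight. A minor subtlety is the case of very small $t$ just above $C_0$ (possible when $|y|$ is small), which can be handled either by enlarging $C_0$ at the outset or by appealing to the trivial estimate \eqref{note-} on any bounded range of $t$ where $|x-y|\lesssim t^{2/(2+\alpha)}$.
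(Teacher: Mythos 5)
Your proposal is correct and follows essentially the same strategy as the paper's proof: upper bound by the semigroup identity and the $T^V_{t/2}1$ bound from Lemma~\ref{l2-2}, lower bound by chaining through a region at scale $t^{2/(2+\alpha)}$ where the medium-time two-sided estimates apply. The only cosmetic differences are that you use a ball of radius $\sqrt{t}$ (and absorb the resulting $t^{-d/2}$) where the paper uses a unit ball, and you invoke Proposition~\ref{p3-1} where the paper cites Lemma~\ref{l3-4} directly; your "minor subtlety" about small $t$ is in fact handled automatically by taking $\kappa$ large enough depending on the given $C_0$, since $t\ge C_0$ in the stated range.
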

\begin{proof}
First, for all $x,y\in \R^d$ with $|x|\le |y|$ and $t>C_0t_0(|y|)$,
\begin{align*}
p(t,x,y)&=\int_{\R^d}p\left(t/2,y,z\right)p\left(t/2,z,x\right)\,dz\le \sup_{z\in \R^d}p\left(t/2,z,x\right)\cdot T^V_{t/2}1(y)\\
&\le c_1t^{-d/2}\exp\left(-c_2t^{(2-\alpha)/(2+\alpha)}\right)\le c_3\exp\left(-c_4t^{(2-\alpha)/(2+\alpha)}\right),
\end{align*}
where the second inequality follows from \eqref{l2-2-1} and the fact $t>C_0t_0(|y|)$, and in the last inequality  we have
used the fact $t>C_0t_0(|y|)\ge c_5$.

Since $t>C_0t_0(|y|)\ge C_0t_0(|x|)$
(in particular, $|x|\le |y|\le c_6 t^{{2}/({\alpha+2})}$$)$, we can find a ball $B(u,1)\subset \R^d$ such that
\begin{equation}\label{p3-2-2}
c_7t^{{2}/({\alpha+2})}\le |z| \le c_8t^{{2}/({\alpha+2})},\quad
\min\{|z-x|,|z-y|\}\ge c_7t^{{2}/({\alpha+2})},\quad z\in B(u,1),
\end{equation}
which implies that
\begin{align*}
t\le c_9t_0(|z|),\quad z\in B(u,1).
\end{align*}
Combining this with the fact $t>C_0t_0(|y|)\ge C_0t_0(|x|)$, we can apply \eqref{l3-4-1} to derive that for all $z\in B(u,1)$,
\begin{align*}
p\left(t/2,x,z\right)&\ge c_{10}t^{-d/2}\exp\left(-c_{11}\left(\frac{|x-z|^2}{t}+\frac{t}{(1+|z|)^\alpha}\right)\right)\ge c_{12}\exp\left(-c_{13}t^{(2-\alpha)/(2+\alpha)}\right)
\end{align*} and
\begin{align*}
p\left(t/2,z,y\right)&\ge c_{10}t^{-d/2}\exp\left(-c_{11}\left(\frac{|y-z|^2}{t}+\frac{t}{(1+|z|)^\alpha}\right)\right)\ge c_{12}\exp\left(-c_{13}t^{(2-\alpha)/(2+\alpha)}\right).
\end{align*}
Here the last step in both inequalities above follows from the fact that $$|x-z|+|y-z|\le c_{14}(|x|+|y|+|z|)\le c_{15} t^{{2}/({\alpha+2})},$$ thanks to the first inequality in \eqref{p3-2-2}. Hence, it holds that
\begin{align*}
p(t,x,y)&\ge \int_{B(u,1)}p\left(t/2,x,z\right)p\left(t/2,z,y\right)\,dz
\ge c_{16}\exp\left(-c_{17}t^{(2-\alpha)/(2+\alpha)}\right).
\end{align*}
Therefore, we have finished the proof for \eqref{p3-2-1}.
\end{proof}

Finally, we present the

\begin{proof}[Proof of Theorem $\ref{t1-1}$]
Note that for all $x,y\in \R^d$ with $|x|\le |y|$ and for all $t>C_0t_0(|y|)$,
\begin{align*}
\frac{c_1|x-y|^2}{t}\le \frac{4c_1|y|^2}{t}\le c_2t^{(2-\alpha)/(2+\alpha)}.
\end{align*}
Then, Theorem \ref{t1-1} immediately follows from Propositions \ref{p3-1} and \ref{p3-2}.
\end{proof}

\subsection{Proof of Theorem \ref{t1-3}: estimates of Green function}

\begin{proof}[Proof of Theorem $\ref{t1-3}$]
Without loss of generality, throughout the proof we always assume that $|x|\le |y|$. The proof is split into two cases.

{\bf Case 1:\,\, $|x-y|\ge 1+|y|$.}\,\, In this case, it is easy to see that
$$
\frac{|x-y|^2}{t}\ge \frac{t}{(1+|y|)^{\alpha}},\quad 0<t\le (1+|y|)^{1+\alpha/2},
$$
which along with \eqref{t1-1-1} yields that \begin{equation}\label{t1-3-2}
p(t,x,y)\asymp
\begin{cases}
t^{-d/2}\exp\left(-\frac{|x-y|^2}{t}\right),\ \ &0<t\le (1+|y|)^{1+\alpha/2},\\
\exp\left(-t^{(2-\alpha)/(2+\alpha)}\right),\ &t>(1+|y|)^{1+\alpha/2}.
\end{cases}
\end{equation}
Hence,
\begin{align*}
G(x,y)&=\int_0^\infty p(t,x,y)\,dt\\
&\le c_1\int_0^{(1+|y|)^{1+\alpha/2}}t^{-d/2}\exp\left(-\frac{c_2|x-y|^2}{t}\right)\,dt+c_1
\int_{(1+|y|)^{1+\alpha/2}}^\infty \exp\left(-c_2t^{(2-\alpha)/(2+\alpha)}\right)\,dt\\
&=:I_1+I_2.
\end{align*}

By the change of variable $s=\frac{t}{|x-y|^2}$,  we have
\begin{align*}
I_1&= c_1|x-y|^{-(d-2)}\int_0^{\frac{(1+|y|)^{1+\alpha/2}}{|x-y|^2}}s^{-d/2}e^{-{c_2}/{s}}\,ds\\
&\le c_3|x-y|^{-(d-2)}\exp\left(-\frac{c_4|x-y|^2}{(1+|y|)^{1+\alpha/2}}\right)\le c_3|x-y|^{-(d-2)}\exp\left(-\frac{c_4|x-y|}{(1+|y|)^{\alpha/2}}\right),
\end{align*}
where in the first inequality we have used the fact
\begin{align*}
\int_0^{\frac{(1+|y|)^{1+\alpha/2}}{|x-y|^2}}s^{-d/2}e^{-{c_2}/{s}}\,ds\le c_3
 \exp\left(-\frac{c_4|x-y|^2}{(1+|y|)^{1+\alpha/2}}\right),
\end{align*}
thanks to  $\frac{(1+|y|)^{1+\alpha/2}}{|x-y|^2}\le 1$, and
the last inequality follows from $|x-y|\ge 1+|y|$.

Meanwhile, it holds that
\begin{align*}
I_2&\le c_5\exp\left(-c_6\left((1+|y|)^{1+\alpha/2}\right)^{(2-\alpha)/(2+\alpha)}\right)= c_5
\exp\left(-c_6(1+|y|)^{1-\alpha/2}\right)\\
&\le c_7|x-y|^{-(d-2)}\exp\left(-\frac{c_8|x-y|}{(1+|y|)^{\alpha/2}}\right),
\end{align*}
where the last inequality follows from the facts that $|x-y|\simeq (1+|y|)$ and $|y|\ge |x|$.

Putting both estimates for $I_1$ and $I_2$ together and using the fact $|x-y|\simeq 1+|y|$ again, we obtain that
$$
G(x,y)\le c_{9}|x-y|^{-(d-2)}\exp\left(-\frac{c_{10}|x-y|}{(1+|y|)^{\alpha/2}}\right).
$$

On the other hand, according to \eqref{t1-3-2}, we have
\begin{align*}
G(x,y)&\ge c_{11}\int_{(1+|y|)^{1+\alpha/2}}^\infty \exp\left(-c_{12}t^{(2-\alpha)/(2+\alpha)}\right)\,dt\ge c_{13}\exp\left(-c_{14}(1+|y|)^{1-\alpha/2}\right)\\
&\ge c_{15}|x-y|^{-(d-2)}\exp\left(-\frac{c_{16}|x-y|}{(1+|y|)^{\alpha/2}}\right),
\end{align*}
where in the last inequality we have used again the fact that $|x-y|\simeq (1+|y|)$.

{\bf Case 2:\,\, $|x-y|\le 1+|y|$.}\,\, Note that
\begin{align*}
&\frac{|x-y|^2}{t}\ge \frac{t}{(1+|y|)^{\alpha}},\quad 0<t\le |x-y|(1+|y|)^{\alpha/2},\\
&\frac{|x-y|^2}{t}\le \frac{t}{(1+|y|)^{\alpha}},\quad |x-y|(1+|y|)^{\alpha/2}<t\le (1+|y|)^{1+\alpha}.
\end{align*}
By \eqref{t1-1-1}, it holds that
\begin{equation}\label{t1-3-3}
p(t,x,y)\asymp
\begin{cases}
t^{-d/2}\exp\left(-\frac{|x-y|^2}{t}\right),\ \ &0<t\le |x-y|(1+|y|)^{\alpha/2},\\
t^{-d/2}\exp\left(-\frac{t}{(1+|y|)^\alpha}\right),\ \  &|x-y|(1+|y|)^{\alpha/2}<t \le (1+|y|)^{1+\alpha/2},\\
\exp\left(-t^{(2-\alpha)/(2+\alpha)}\right),\ &t>(1+|y|)^{1+\alpha/2}.
\end{cases}
\end{equation}
Then,
\begin{align*}
G(x,y)&=\int_0^\infty p(t,x,y)\,dt\\
&\le c_1\int_0^{|x-y|(1+|y|)^{\alpha/2}}t^{-d/2}\exp\left(-\frac{c_2|x-y|^2}{t}\right)\,dt+c_1
\int_{|x-y|(1+|y|)^{\alpha/2}}^{(1+|y|)^{1+\alpha/2}}t^{-d/2}\exp\left(-\frac{c_2 t}{(1+|y|)^\alpha}\right)\,dt\\
&\quad +c_1\int_{(1+|y|)^{1+\alpha/2}}^\infty \exp\left(-c_2t^{(2-\alpha)/(2+\alpha)}\right)\,dt\\
&=:J_1+J_2+J_3.
\end{align*}

By the change of variable $s=\frac{t}{|x-y|^2}$,
\begin{align*}
J_1&=c_1|x-y|^{-(d-2)}\int_0^{\frac{(1+|y|)^{\alpha/2}}{|x-y|}}s^{-d/2}e^{-{c_2}/{s}}\,ds\\
&\le c_3|x-y|^{-(d-2)}\exp\left(-\frac{c_4|x-y|}{(1+|y|)^{\alpha/2}}\right)\cdot
\begin{cases}
1,\ \ & d\ge 3,\\
1+\left(\log\left(\frac{(1+|y|)^{\alpha/2}}{|x-y|}\right)\right)_+,\ &d=2.
\end{cases}
\end{align*}
Applying the change of variable $s=\frac{t}{(1+|y|)^\alpha}$, we obtain that for $d\ge 3$,
\begin{align*}
J_2&=c_1(1+|y|)^{-\frac{\alpha(d-2)}{2}}\int_{\frac{|x-y|}{(1+|y|)^{\alpha/2}}}^{(1+|y|)^{1-\alpha/2}}s^{-d/2}e^{-c_2 s}\,ds\\
&\le c_5 (1+|y|)^{-\frac{\alpha(d-2)}{2}}\exp\left(-\frac{c_2|x-y|}{(1+|y|)^{\alpha/2}}\right)\int_{\frac{|x-y|}{(1+|y|)^{\alpha/2}}}^{(1+|y|)^{1-\alpha/2}}s^{-d/2}ds\\
&\le c_6(|x-y|(1+|y|)^{\alpha/2})^{-d/2+1}\exp\left(-\frac{c_2|x-y|}{(1+|y|)^{\alpha/2}}\right)\\
&=c_6|x-y|^{-(d-2)}\left(\frac{|x-y|}{(1+|y|)^{\alpha/2}}\right)^{d/2-1}\exp\left(-\frac{c_2|x-y|}{(1+|y|)^{\alpha/2}}\right)\\
&\le c_7|x-y|^{-(d-2)}\exp\left(-\frac{c_2|x-y|}{(1+|y|)^{\alpha/2}}\right);
\end{align*}
while for $d=2$,
\begin{align*}
J_2&=c_1\int_{\frac{|x-y|}{(1+|y|)^{\alpha/2}}}^{(1+|y|)^{1-\alpha/2}}s^{-1}e^{-c_2 s}\,ds\\
&=c_1\int_{\frac{|x-y|}{(1+|y|)^{\alpha/2}}}^{\max\left\{\frac{|x-y|}{(1+|y|)^{\alpha/2}},1\right\}}s^{-1}e^{-c_2 s}\,ds+c_1
\int_{{\max\left\{\frac{|x-y|}{(1+|y|)^{\alpha/2}},1\right\}}}^{(1+|y|)^{1-\alpha/2}}s^{-1}e^{-c_2 s}\,ds\\
&\le c_{8}\left(1+\left(\log\left(\frac{(1+|y|)^{\alpha/2}}{|x-y|}\right)\right)_+\right)
\exp\left(-\frac{c_{9}|x-y|}{(1+|y|)^{\alpha/2}}\right).
\end{align*}
Furthermore, we have
\begin{align*}
J_3&\le c_{10} \exp\left(-c_{11}\left((1+|y|)^{1+\alpha/2}\right)^{(2-\alpha)/(2+\alpha)}\right)=
c_{10}\exp\left(-c_{11}(1+|y|)^{1-\alpha/2}\right)\\
&\le c_{12}(1+|y|)^{-(d-2)}\exp\left(-\frac{c_{11}(1+|y|)^{1-\alpha/2}}{2}\right)\le
c_{13}|x-y|^{-(d-2)}\exp\left(-\frac{c_{11}|x-y|}{2(1+|y|)^{\alpha/2}}\right),
\end{align*}
where in the last inequality we used the fact that $|x-y|\le 1+|y|$.

Combining with all the estimates for $J_1$, $J_2$ and $J_3$, we prove the
desired upper bound when $|x-y|\le 1+|y|$.

Next we turn to the lower bound. According to \eqref{t1-3-3}, it holds that
\begin{align*}
G(x,y)&\ge c_{14}\int_{0}^{|x-y|(1+|y|)^{\alpha/2}}t^{-d/2}\exp\left(-\frac{c_{15}|x-y|^2}{t}\right)dt\\
&=c_{14} |x-y|^{-(d-2)}\int_0^{\frac{(1+|y|)^{\alpha/2}}{|x-y|}}s^{-d/2}e^{-{c_{15}}/{s}}\,ds\\
&\ge c_{16}|x-y|^{-(d-2)}\exp\left(-\frac{c_{17}|x-y|}{(1+|y|)^{\alpha/2}}\right)\cdot
\begin{cases}
1,\ \ & d\ge 3,\\
1+\left(\log\left(\frac{(1+|y|)^{\alpha/2}}{|x-y|}\right)\right)_+,\ &d=2.
\end{cases}
\end{align*}

Therefore, by all the conclusions above, we have proved the desired conclusion.
\end{proof}

\section{The case for negative potentials}
In this section, \emph{we always assume that \eqref{e1-1a-} holds}. We aim to prove Theorem \ref{t1-2}. Without any mention throughout the proof below we suppose that $x,y\in \R^d$ so that $|x|\le |y|$.

\subsection{Upper bound}

\begin{lemma}\label{l4-2}
There exist positive constants $C_{1}$, $C_{2}$ and $C_{3}$ such that for all $t>1$ and $x,y\in \R^d$,
\begin{equation}\label{l4-2-1}
\begin{split}
p(t,x,y)\le
C_{1}t^{- {d}/{2}}\exp\left(\max\left\{\frac{C_2t}{(1+|x|)^\alpha},C_2t-\frac{C_3(1+|x|^2)}{t}\right\}\right).
\end{split}
\end{equation}
\end{lemma}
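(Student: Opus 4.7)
The plan is to adapt the stopping-time argument of Lemma \ref{l2-2} to the negative potential setting, and then transfer the resulting bound on $T_t^V 1(x)$ to a bound on $p(t,x,y)$ via an on-diagonal estimate and Cauchy--Schwarz. The essential difference from the positive case is that $-V\ge 0$, so the Feynman--Kac functional $\exp(-\int_0^t V(B_s)\,ds)\ge 1$ grows rather than decays, and we must control its growth from above.

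For $|x|\ge 2$ I would set $U:=B(x,|x|/2)$, $\tau:=\tau_U$, and split $T_t^V 1(x)$ into contributions from $\{\tau>t\}$ and $\{\tau\le t\}$. On $U$ we have $|V|\le c(1+|x|)^{-\alpha}$, so the first contribution is bounded trivially by $\exp(Ct/(1+|x|)^\alpha)$. For the second, applying the strong Markov property at $\tau$, the Feynman--Kac factor on $[0,\tau]$ is bounded by $\exp(Ct/(1+|x|)^\alpha)$, the factor $T_{t-\tau}^V 1(B_\tau)$ by the crude global bound $e^{Ct}$ coming from global boundedness of $V$, and $\Pp_x(\tau\le t)$ by $Ce^{-c|x|^2/t}$. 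Summing and absorbing the easy $|x|<2$ case, this gives $T_t^V 1(x)\le C\exp(\max\{C_2 t/(1+|x|)^\alpha,\ C_2t-C_3(1+|x|^2)/t\})$.

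The naive transfer $p(t,x,y)\le\sup_z p(t/2,z,y)\cdot T_{t/2}^V 1(x)$ combined with $p\le q\,e^{ct}$ would introduce a spurious $e^{ct/2}$ factor that spoils the shape of the exponent. Instead, by symmetry $p(t,z,w)=p(t,w,z)$ and Cauchy--Schwarz, $p(t,x,y)=\int p(t/2,x,z)p(t/2,y,z)\,dz\le\sqrt{p(t,x,x)\,p(t,y,y)}$. For the diagonal $p(t,x,x)$ I would invoke the Feynman--Kac representation with a Brownian bridge, $p(t,x,x)=q(t,x,x)\Ee_{x,x}^t[\exp(-\int_0^t V(B_s)\,ds)]$, and split on the event $A=\{\sup_{0\le s\le t}|B_s-x|<|x|/2\}$: on $A$ the integrand is $\le\exp(Ct/(1+|x|)^\alpha)$, on $A^c$ it is $\le e^{Ct}$, and the bridge exit estimate $\Pp_{x,x}^t(A^c)\le Ce^{-c|x|^2/t}$ handles the remaining factor. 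This yields the same exponential shape for $p(t,x,x)$, multiplied by $q(t,x,x)\simeq Ct^{-d/2}$.

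Finally, the upper bound $\phi(r):=Ct^{-d/2}\exp(\max\{C_2t/(1+r)^\alpha,\ C_2t-C_3(1+r^2)/t\})$ is monotone decreasing in $r$ (both arguments of the max are decreasing in $r$), so under the standing assumption $|x|\le|y|$ we have $\sqrt{p(t,x,x)p(t,y,y)}\le\sqrt{\phi(|x|)\phi(|y|)}\le\phi(|x|)$, which is exactly \eqref{l4-2-1}. The main technical obstacle is the Brownian bridge exit estimate in the diagonal step: conditioning on the endpoint alters the modulus of continuity, so the unconditioned Gaussian tail cannot be quoted directly. I would handle this either via the classical distribution of the maximum of a Brownian bridge, or, more flexibly in general dimension, by decomposing the bridge at time $t/2$ through the Markov property and reducing to two unconditioned Gaussian exit estimates on the two halves.
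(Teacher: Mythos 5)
Your argument is correct, but it takes a genuinely different route from the paper's. The paper proves the bound by directly estimating $T_t^V f(x)$ for test functions $f\in C_c(\R^d)$: it sets $U=B(x,|x|/2)$, splits the Feynman--Kac expectation on $\{\tau_U>t\}$ and $\{\tau_U\le t\}$, absorbs $\exp(-\int_0^t V)$ by $\exp(ct/(1+|x|)^\alpha)$ resp.\ $e^{ct}$, and then gains the crucial $t^{-d/2}$ by integrating $f$ against $q_U(t,x,\cdot)$ in the first piece and against the Gaussian tail $q(s,u,\cdot)$ after the strong Markov property in the second; since the resulting bound is $c(t,x)\int f$, it reads off the pointwise upper bound on $p(t,x,\cdot)$. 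You instead symmetrize: reduce to the diagonal via $p(t,x,y)\le\sqrt{p(t,x,x)p(t,y,y)}$, then use the Brownian-bridge representation $p(t,x,x)=q(t,x,x)\,\Ee_{x,x}^t[\exp(-\int_0^t V)]$ and split on the bridge exiting $B(x,|x|/2)$. The bridge exit estimate you flag as the technical point is indeed nonobvious but standard; your proposed fixes (the explicit law of the bridge maximum in each coordinate, or splitting the bridge at $t/2$) both work, and so does the more elementary observation that a bridge from $x$ to $x$ on $[0,t]$ can be realized as $s\mapsto x+B_s-\tfrac{s}{t}B_t$, whence $\sup_s|X_s-x|\le 2\sup_s|B_s|$ and the unconditioned Gaussian tail applies. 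Each route buys something: the paper's proof is more elementary and avoids invoking the bridge law, while yours is conceptually cleaner in that Cauchy--Schwarz and monotonicity of the exponent in $|x|$ isolate exactly why the bound can depend only on $\min\{|x|,|y|\}$, and it sidesteps having to carry the test function $f$ through the stopping-time decomposition. One minor stylistic remark: your second paragraph derives a bound on $T_t^V 1(x)$ that you then discard once you observe the naive transfer introduces a spurious $e^{ct}$; in a write-up you could cut that paragraph and go straight to the diagonal argument, since the bridge computation reproduces the same exponent anyway.
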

\begin{proof}
Without loss of generality we can assume that $|x|\ge 4$; otherwise the assertion immediately follows from
\eqref{note-}.
Set $U:=B(x,{|x|}/{2})$. According to \eqref{e1-4} again, for any given $f\in C_c(\R^d)$ and $t>0$,
\begin{align*}
T_t^V f(x)&=\Ee_x\left[\exp\left(-\int_0^t V(B_s)\,ds\right)f(B_t)\I_{\{\tau_U>t\}}\right]+
\Ee_x\left[\exp\left(-\int_0^t V(B_s)\,ds\right)f(B_t)\I_{\{\tau_U\le t\}}\right]\\
&=:I_1+I_2.
\end{align*}

By \eqref{e1-1a-}, $$\sup_{z\in U}(-V(z))\le \frac{c_1}{(1+|x|)^\alpha},$$ which implies that
\begin{align*}
I_1&\le  \exp\left(\frac{c_1t}{(1+|x|)^\alpha}\right)\Ee_x\left[f(B_t)\I_{\{\tau_U>t\}}\right]= \exp\left(\frac{c_1t}{(1+|x|)^\alpha}\right)\int_U q_U(t,x,z)f(z)\,dz\\
&\le c_2t^{-d/2}\exp\left(\frac{c_1t}{(1+|x|)^\alpha}\right)\int_{\R^d}f(z)\,dz.
\end{align*}
Here in the last inequality above we have used the fact that
\begin{align*}
q_U(t,x,z)\le q(t,x,z)\le c_2t^{-d/2},\quad t>0.
\end{align*}

For any $t\ge1$, we have
\begin{align*}
I_2&\le \Ee_x\left[\exp\left(-\int_0^t V(B_s)\,ds\right)f(B_t)\I_{\{\tau_U\le t,B_t\in B(x,{|x|}/{3})\}}\right]\\
&\quad +\Ee_x\left[\exp\left(-\int_0^t V(B_s)\,ds\right)f(B_t)\I_{\{\tau_U\le t,B_t\notin B(x,{|x|}/{3})\}}\right]\\
&\le e^{c_3t}\Big(\Ee_x\left[f(B_t)\I_{\{\tau_U\le t,B_t\in B(x,{|x|}/{3})\}}\right]+\Ee_x\left[f(B_t)\I_{\{\tau_U\le t,B_t\notin B(x,{|x|}/{3})\}}\right]\Big)\\
&\le t^{-d/2}e^{2c_3t}\Big(\Ee_x\left[f(B_t)\I_{\{\tau_U\le t,B_t\in B(x,{|x|}/{3})\}}\right]+\Ee_x\left[f(B_t)\I_{\{\tau_U\le t,B_t\notin B(x,{|x|}/{3})\}}\right]\Big)\\
&=:t^{-d/2}e^{2c_3t}\left(I_{21}+I_{22}\right).
\end{align*}
It holds that
\begin{equation*}
\begin{split}
I_{21}&\le \Ee_x\left[\Ee_{B_{\tau_U}}\left(f(B_{t-\tau_U})\I_{\{B_{t-\tau_U}\in B(x,{|x|}/{3})\}}\right)
\I_{\{\tau_U\le t\}}
\right]\le \sup_{s\in (0,t],u\in \partial U}\int_{B(x,{|x|}/{3})}q(s,u,z)f(z)\,dz\\
&\le c_4\sup_{s\in (0,t]}\left( s^{-d/2}\exp\left(-\frac{c_5|x|^2}{s}\right)\right)\cdot \int_{\R^d}f(z)\,dz\le c_6\exp\left(-\frac{c_7|x|^2}{t}\right)\cdot \int_{\R^d}f(z)\,dz,
\end{split}
\end{equation*}
where in the last inequality we have used the fact that for every $x\in \R^d$ with $|x|\ge 1$
\begin{align*}
\sup_{s\in (0,t]}\left( s^{-d/2}\exp\left(-\frac{c_5|x|^2}{2s}\right)\right)\le
\sup_{s>0}\left( s^{-d/2}\exp\left(-\frac{c_5}{2s}\right)\right)\le c_8.
\end{align*}
On the other hand,
\begin{equation*}
\begin{split}
I_{22}&\le  \Ee_x\left[f(B_t)\I_{\{B_t\notin B(x,{|x|}/{3})\}}\right]=\int_{B(x,{|x|}/{3})^c}q(t,x,z)f(z)\,dz\\
&\le c_9t^{-d/2}\exp\left(-\frac{c_{10}|x|^2}{t}\right)\int_{\R^d}f(z)\,dz.
\end{split}
\end{equation*}

Combining with all the estimates above yields that for every $f\in C_c(\R^d)$ and $t>1$,
\begin{align*}
T_t^Vf(x)&=\int_{\R^d}p(t,x,z)f(z)\,dz\le
c_{11}t^{-d/2}\exp\left(\max\left\{\frac{c_{12}t}{(1+|x|)^\alpha},c_{12}t-\frac{c_{13}(1+|x|^2)}{t}\right\}\right).
\int_{\R^d}f(z)\,dz.
\end{align*}
This proves the desired conclusion \eqref{l4-2-1}.
\end{proof}

\begin{lemma}\label{l4-3}
For any positive constant $C_0$, there exist positive constants $C_4$, $C_5$, $C_6$ and $C_7$ such that for all $t>2$ and $x,y\in \R^d$ with $|x-y|> C_0 t^{1/2}$,
\begin{equation}\label{l4-3-1}
 p(t,x,y)\le C_4t^{- {d}/{2}}\exp\left(-\frac{C_5|x-y|^2}{t}\right)\exp\left(\max\left\{\frac{C_6t}{(1+|x|)^\alpha},C_6t-\frac{C_7(1+|x|^2)}{t}\right\}\right).
\end{equation}
\end{lemma}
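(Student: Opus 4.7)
The plan is to adapt the strategy of Lemma~\ref{l3-3} (the off-diagonal upper bound for the positive-potential case), with Lemma~\ref{l4-2} playing the role that Lemma~\ref{l2-2} played there. Starting from the Chapman--Kolmogorov identity
\begin{equation*}
p(t,x,y)=\int_{\R^d}p(t/2,x,z)\,p(t/2,z,y)\,dz=J_1+J_2,
\end{equation*}
I split the integral at $|z-y|=|x-y|/2$, with $J_1$ the part where $|z-y|\le|x-y|/2$ (so that $|z-x|\ge|x-y|/2>C_0t^{1/2}/2$) and $J_2$ the complementary piece. By the symmetry $p(t,x,y)=p(t,y,x)$ and the assumption $|x|\le|y|$, $J_2$ is estimated analogously to $J_1$, so I focus on $J_1$.

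For $J_1$ I intend to establish an off-diagonal upper bound
\begin{equation*}
p(t/2,x,z)\le c\,t^{-d/2}\exp\!\left(-\frac{c'|x-y|^2}{t}\right)\exp\!\left(\max\!\left\{\frac{C_6t}{(1+|x|)^\alpha},\,C_6t-\frac{C_7(1+|x|^2)}{t}\right\}\right)
\end{equation*}
uniformly in $z$ with $|z-x|\ge|x-y|/2$, and then use Lemma~\ref{l4-2} to handle the remaining factor through
\begin{equation*}
\int_{|z-y|\le|x-y|/2}p(t/2,z,y)\,dz\le T^V_{t/2}1(y)\le c\exp\!\left(\max\!\left\{\frac{C_2t}{(1+|y|)^\alpha},\,C_2t-\frac{C_3(1+|y|^2)}{t}\right\}\right),
\end{equation*}
where the right-hand side is bounded by the analogous $|x|$-expression thanks to $|x|\le|y|$. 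The off-diagonal bound above will be derived by a Feynman--Kac argument combined with the strong Markov property, following the mechanism in the proof of Lemma~\ref{l4-2}: I take $U=B(x,R)$ with $R$ of order $\min(|x|/2,\,|x-y|/4)$ so that $z\notin U$ and $-V\le c(1+|x|)^{-\alpha}$ on $U$; then $\exp(-\int_0^{\tau_U}V(B_s)\,ds)\le\exp(ct/(1+|x|)^\alpha)$, which feeds into the first entry of the $\max$; the strong Markov property at $\tau_U$ combined with Lemma~\ref{l4-2} applied at the new starting point $B_{\tau_U}$ (for which $|B_{\tau_U}|\ge|x|/2$, so the resulting $\max$-expression stays comparable to the one at $x$) controls $p(t/2-\tau_U,B_{\tau_U},z)$; and the standard Gaussian exit estimate $\Pp_x(\tau_U\le t/2)\le c\exp(-cR^2/t)$ supplies the decay factor.

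The hard part will be the subcase $|x-y|>2|x|$, in which $R=|x|/2<|x-y|/4$ and a single exit argument produces only $\exp(-c|x|^2/t)$ rather than the required $\exp(-c|x-y|^2/t)$. I plan to handle this either by iterating the stopping-time decomposition (each iteration moves the effective starting point a distance of order $|x|$, and a bounded number of iterations is enough to close the gap) or by a chain of overlapping balls connecting $x$ to a neighbourhood of $z$ in the spirit of Lemma~\ref{l3-2}, each ball contributing an independent Gaussian factor whose product delivers $\exp(-c|x-y|^2/t)$. A parallel pitfall is that the naive comparison $p(t/2,x,z)\le e^{K_2t/2}q(t/2,x,z)$ produces an unabsorbable $e^{K_2t/2}$ when $|x|$ is very large, which is exactly why the refined stopping-time argument (rather than the crude Gaussian comparison) is needed. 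Assembling all ingredients and enlarging the constants $C_4$ through $C_7$ to absorb the various multiplicative factors then yields \eqref{l4-3-1} for $J_1$, and the bound for $J_2$ follows symmetrically.
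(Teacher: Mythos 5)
Your proposal diverges substantially from the paper's proof, and the point you yourself flag as ``the hard part'' is a genuine gap that your suggested fixes do not close.

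The paper does not split at time $t/2$. Instead it writes $p(t+1,x,y) = T^V_t\bigl(p(1,\cdot,x)\bigr)(y)$ (Case~1) or $T^V_t\bigl(p(1,\cdot,y)\bigr)(x)$ (Case~2). Shifting by one unit of time is the key move: it replaces the singular endpoint of the heat kernel by the smooth function $p(1,\cdot,\cdot)$, and the Gaussian decay $\exp(-c|x-y|^2/t)$ is extracted from the pointwise estimate $p(1,z,x)\le c\,e^{-c|z-x|^2}$ (valid for $z$ in the ball, using $|x-y|>C_0 t^{1/2}$ to convert $e^{-c|x-y|^2}$ into $t^{-d/2}e^{-c|x-y|^2/t}$) rather than from an exit probability alone. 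In Case~1, $|x-y|\ge(1+|x|)/8$, the exit ball is centered at $y$, namely $U=B(y,|x-y|/3)$: every $z\in U$ satisfies $|z|\ge|y|/3$, so the potential is uniformly small on $U$ and the Feynman--Kac factor is controlled by $\exp(ct/(1+|y|)^\alpha)\le\exp(ct/(1+|x|)^\alpha)$; simultaneously $|z-x|\ge 2|x-y|/3$ gives the Gaussian factor. In Case~2, $|x-y|\le(1+|x|)/8$, the paper uses two nested balls $U_1\subset U_2$ centered at $x$; the $\exp(-c(1+|x|)^2/t)$ term in the $\max$ comes precisely from the exit of $U_2$.

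Your plan always centers the ball at $x$ with radius $R=\min(|x|/2,|x-y|/4)$, and you acknowledge that when $|x-y|>2|x|$ this yields only $\exp(-c|x|^2/t)$ instead of $\exp(-c|x-y|^2/t)$. Neither of your proposed remedies resolves this. A chain of overlapping balls in the spirit of Lemma~\ref{l3-2} is a device for \emph{lower} bounds: one restricts Chapman--Kolmogorov to paths passing through the chain. For an upper bound one must control all paths, and paths that leave the chain contribute in an uncontrolled way. Iterating the stopping-time decomposition has the same problem: at each iteration the exit position $B_{\tau_U}$ can lie anywhere on $\partial U$, including points closer to the origin where the potential is large; without the geometry that the paper arranges (recentering at $y$ so that $|z|\ge|y|/3$ throughout), the Feynman--Kac exponent can grow by a factor $e^{ct}$ at each step and the chain never converges. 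Your observation that $|B_{\tau_U}|\ge|x|/2$ is also insufficient, because the bound of Lemma~\ref{l4-2} applied to $p(s,B_{\tau_U},z)$ actually involves $\min(|B_{\tau_U}|,|z|)$, and for $z$ near $y$ but on the far side one can have $|z|<|x|/2$. Finally, the step $\int_{\{|z-y|\le|x-y|/2\}} p(t/2,z,y)\,dz\le T^V_{t/2}1(y)\le\ldots$ should cite Lemma~\ref{l4-1}, not Lemma~\ref{l4-2}: the latter gives $T^V_t f\le c\,t^{-d/2}\exp(\ldots)\int f$, which is vacuous at $f\equiv 1$, whereas Lemma~\ref{l4-1} gives the needed bound on $T^V_t 1$ directly.
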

\begin{proof}
The proof is split into two cases.

{\bf Case 1: $|x-y|\ge {(1+|x|)}/{8}$.}\,\,Define $U=B(y,{|x-y|}/{3})$. By the semigroup property, it holds that for all $t>0$
\begin{align*}
p(t+1,x,y)&=p(t+1,y,x)=\int_{\R^d}p(t,y,z)p(1,z,x)\,dz=T_t^V\left(p(1,\cdot,x)\right)(y)\\
&=\Ee_y\left[\exp\left(-\int_0^t V(B_s)\,ds\right)p(1,B_t,x)\right]\\
&=\Ee_y\left[\exp\left(-\int_0^t V(B_s)\,ds\right)p(1,B_t,x)\I_{\{\tau_U>t\}}\right]\\
&\quad+
\Ee_y\left[\exp\left(-\int_0^t V(B_s)\,ds\right)p(1,B_t,x)\I_{\{\tau_U\le t\}}\right]\\
&=:I_1+I_2.
\end{align*}

Note that, for all $z\in U$, $|z|\ge |y|-{|x-y|}/{3}\ge |y|-({|x|+|y|})/{3}\ge {|y|}/{3}$, and so
$$\sup_{z\in U}(-V(z))\le \frac{c_1}{(1+|y|)^\alpha}.$$ On the other hand, for all $z\in U$, $|z-x|\ge |x-y|-|y-z|\ge 2|x-y|/3$, and so
\begin{align}\label{l4-3-2a}
p(1,z,x)\le c_2q(1,z,x)\le c_2\exp\left(-c_3|z-x|^2\right)\le c_2
\exp\left(-c_4|x-y|^2\right),\quad z\in U.
\end{align}
Thus,
\begin{align*}
I_1&\le \exp\left(t\sup_{z\in U}(-V(z))\right)\cdot \sup_{z\in U}p(1,z,x)\le c_2\exp\left(\frac{c_1 t}{(1+|y|)^\alpha}\right)\cdot \exp\left(-c_4|x-y|^2\right)\\
&\le  c_5t^{-d/2}\exp\left(-\frac{c_6|x-y|^2}{t}\right)\cdot\exp\left(\frac{c_1 t}{(1+|y|)^\alpha}\right),
\end{align*}
where in the last inequality we used the fact that
\begin{equation}\label{l4-3-2}
\begin{split}
\exp\left(-c_4|x-y|^2\right)&\le c_7|x-y|^{-d}\exp\left(-\frac{c_4|x-y|^2}{2t}\right)\\
&=c_7t^{-d/2}\left(\frac{|x-y|^2}{t}\right)^{-d/2}\exp\left(-\frac{c_4|x-y|^2}{2t}\right)\\
&\le c_8t^{-d/2}\exp\left(-\frac{c_6|x-y|^2}{t}\right),\quad
|x-y|>C_0t^{1/2},\ t>1.
\end{split}
\end{equation}

Meanwhile, for all $t>1$, it holds that
\begin{align*}
I_2&\le \Ee_y\left[\exp\left(-\int_0^t V(B_s)\,ds\right)p(1,B_t,x)\I_{\{\tau_U\le t,B_t\in B(y,{|x-y|}/{3})\}}\right]\\
&\quad +\Ee_y\left[\exp\left(-\int_0^t V(B_s)\,ds\right)p(1,B_t,x)\I_{\{\tau_U\le t,B_t\notin B(y,{|x-y|}/{3})\}}\right]\\
&\le c_{9}e^{c_{10}t}\Big(\Pp_y\left(\tau_U\le t, B_t\in B(y,{|x-y|}/{3})\right)+\Pp_y\left(\tau_U\le t, B_t\notin B(y,{|x-y|}/{3})\right)\Big)\\
&\le c_{9}t^{-d/2}e^{2c_{10}t}\Big(\Pp_y\left(\tau_U\le t, B_t\in B(y,{|x-y|}/{3})\right)+\Pp_y\left(\tau_U\le t, B_t\notin B(y,{|x-y|}/{3})\right)\Big)\\
&=:c_{9}t^{-d/2}e^{2c_{10}t}\left(I_{21}+I_{22}\right).
\end{align*}
Here in the third inequality we have used the fact that
\begin{align*}
\sup_{z \in \R^d}p(1,z,x)\le e^{c_{11}}\sup_{z \in \R^d}q(1,z,x)\le c_{12}.
\end{align*}
According to the arguments of \eqref{l2-2-3} and \eqref{l2-2-4}
(see also the arguments for estimates of $I_{21}$ and $I_{22}$ in the proof of Lemma \ref{l4-2}), we can obtain
\begin{align*}
I_{21}+I_{22}&\le c_{13}\exp\left(-\frac{c_{14}|x-y|^2}{t}\right),\quad
|x-y|>C_0t^{1/2},\ t>1.
\end{align*}

Therefore, combining with all the estimates above yields that for all $t>1$,
\begin{align*}
 p(t+1,x,y)&\le c_{15}t^{- {d}/{2}}\exp\left(-\frac{c_{16}|x-y|^2}{t}\right)\exp\left(\max\left\{\frac{c_{17}t}{(1+|y|)^\alpha},c_{17}t-\frac{c_{18}|x-y|^2}{t}\right\}\right)\\
 &\le c_{15}t^{- {d}/{2}}\exp\left(-\frac{c_{16}|x-y|^2}{t}\right)\exp\left(\max\left\{\frac{c_{17}t}{(1+|x|)^\alpha},c_{17}t-\frac{c_{19}(1+|x|^2)}{t}\right\}\right),
\end{align*}
where the last inequality follows from the fact that $|x|\le |y|$ and $|x-y|\ge (1+|x|)/8$.

{\bf Case 2: $|x-y|\le  {(1+|x|)}/{8}$.}\,\,Define $U_1:=B\left(x,{|x-y|/2} \right)$ and $U_2=B\left(x,({1+|x|})/{2}\right)$. It is obvious that $U_1\subset U_2$. Following the arguments in {\bf Case 1}, we have for all $t>0$,
\begin{align*}
p(t+1,x,y)
&=\int_{\R^d}p(t,x,z)p(1,z,y)dz=T_t^V\left(p(1,\cdot,y)\right)(x)\\
&=\Ee_x\left[\exp\left(-\int_0^t V(B_s)ds\right)p(1,B_t,y)\right]\\
&=\Ee_x\left[\exp\left(-\int_0^t V(B_s)ds\right)p(1,B_t,y)\I_{\{\tau_{U_1}>t\}}\right]\\
&\quad+
\Ee_x\left[\exp\left(-\int_0^t V(B_s)ds\right)p(1,B_t,y)\I_{\{\tau_{U_1}\le t,\tau_{U_2}>t\}}\right]\\
&\quad +\Ee_x\left[\exp\left(-\int_0^t V(B_s)ds\right)p(1,B_t,y)\I_{\{\tau_{U_2}\le t\}}\right]\\
&=:J_1+J_2+J_3.
\end{align*}

For all $z\in U_1$, $|z|\ge |x|-{|x-y|/2}\ge |x|-({1+|x|})/{16}$,  $$\sup_{z\in U_1}(-V(z))\le \frac{c_1}{(1+|x|)^\alpha}.$$
Combining this with the arguments for \eqref{l4-3-2a} and \eqref{l4-3-2} yields that
\begin{align*}
J_1&\le \exp\left(t\sup_{z\in U_1}(-V(z))\right)\cdot \sup_{z\in U_1}p(1,z,y)\le  c_2 t^{-d/2}\exp\left(-\frac{c_3|x-y|^2}{t}\right)\cdot\exp\left(\frac{c_1 t}{(1+|x|)^\alpha}\right).
\end{align*}

Noting that $$\sup_{z\in U_2}(-V(z))\le \frac{c_4}{(1+|x|)^\alpha},$$ we can define
\begin{align*}
J_2&\le \exp\left(\frac{c_4 t}{(1+|x|)^\alpha}\right)\cdot
\Ee_x\left[p(1,B_t,y)\I_{\{\tau_{U_1}\le t\}}\right]\\
&\le \exp\left(\frac{c_4 t}{(1+|x|)^\alpha}\right)\cdot\left(
\Ee_x\left[p(1,B_t,y)\I_{\{\tau_{U_1}\le t,  B_t\in B(x,{|x-y|}/{3})\}}\right]+
\Ee_x\left[p(1,B_t,y)\I_{\{\tau_{U_1}\le t, B_t\notin B(x,{|x-y|}/{3})\}}\right]\right)\\
&=:\exp\left(\frac{c_4 t}{(1+|x|)^\alpha}\right)\cdot\left(J_{21}+J_{22}\right).
\end{align*}
Applying the strong Markov property, we get
\begin{align*}
J_{21}&\le \Ee_x\left[\Ee_{B_{\tau_{U_1}}}\left[p\left(1,B_{t-\tau_{U_1}},y\right)\I_{\{B_{t-\tau_{U_1}}\in B(x,{|x-y|}/{3})\}}\right]\I_{\{\tau_{U_1}\le t\}}\right]\\
&\le \sup_{s\in (0,t],u\in \partial U_1}\int_{B(x,{|x-y|}/{3})}q(s,u,z)p(1,z,y)\,dz\\
&\le c_5\sup_{s\in (0,t]}\left( \left(\frac{|x-y|^2}{s}\right)^{d/2}\exp\left(-\frac{c_6|x-y|^2}{s}\right)\right)\cdot |x-y|^{-d}\int_{\R^d}p(1,z,y)\,dz\\
&\le c_7\exp\left(-\frac{c_8|x-y|^2}{t}\right)\cdot |x-y|^{-d}\le c_9t^{-d/2}\exp\left(-\frac{c_{10}|x-y|^2}{t}\right),
\end{align*}
where in the fourth inequality we have used the fact that for every $x,y\in \R^d$ and $t>0$ with $|x-y|\ge C_0t^{1/2}$,
\begin{align*}
\sup_{s\in (0,t]}\left( \left(\frac{|x-y|^2}{s}\right)^{d/2}\exp\left(-\frac{c_6|x-y|^2}{s}\right)\right)\le
c_{11}\exp\left(-\frac{c_8|x-y|^2}{t}\right).
\end{align*}
On the other hand, we have
\begin{equation*}
\begin{split}
J_{22}&\le \Ee_x\left[p(1,B_t,y)\I_{\{B_t\notin B(x,{|x-y|}/{3})\}}\right]=\int_{B(x,{|x-y|}/{3})^c}q(t,x,z)p(1,z,y)\,dz\\
&\le c_{12}\int_{\R^d}q(t,x,z)q(1,z,y)\,dz=c_{12}q(t+1,x,y)\le c_{13}t^{-d/2}\exp\left(-\frac{c_{14}|x-y|^2}{t}\right).
\end{split}
\end{equation*}

Similarly,  we define
\begin{align*}
J_3&\le e^{c_{15}t}\Ee_x\left[p(1,B_t,y)\I_{\{\tau_{U_2}\le t\}}\right]\\
&=e^{c_{15}t}\Big(\Ee_x\left[p(1,B_t,y)\I_{\{\tau_{U_2}\le t,B_t\in B(x,({1+|x|})/{3})\}}\right]+\Ee_x\left[p(1,B_t,y)\I_{\{\tau_{U_2}\le t, B_t\notin B(x,({1+|x|})/{3})\}}\right]\Big)\\
&=:e^{c_{15}t}\left(J_{31}+J_{32}\right).
\end{align*}
Using the strong Markov inequality again, we deduce
\begin{align*}
J_{31}&\le \Ee_x\left[\Ee_{B_{\tau_{U_2}}}\left[p (1,B_{t-\tau_{U_2}},y )\I_{\{B_{t-\tau_{U_2}}\in B(x,({1+|x|})/{3})\}}\right]\I_{\{\tau_{U_2}\le t\}}\right]\\
&\le \sup_{s\in (0,t],u\in \partial U_2}\int_{B(x,({1+|x|})/{3})}q(s,u,z)p(1,z,y)\,dz\\
&\le c_{16}\sup_{s\in (0,t]}\left( \left(\frac{(1+|x|)^2}{s}\right)^{d/2}\exp\left(-\frac{c_{17}(1+|x|)^2}{s}\right)\right)\cdot (1+|x|)^{-d}\int_{\R^d}p(1,z,y)\,dz\\
&\le c_{18}\exp\left(-\frac{c_{19}(1+|x|)^2}{t}\right)\cdot (1+|x|)^{-d}\\
&\le c_{20}t^{-d/2}\exp\left(-\frac{c_{21}|x-y|^2}{t}\right)\exp\left(-\frac{c_{21}(1+|x|)^2}{t}\right),
\end{align*}
where the last inequality follows from the fact that $C_0t^{1/2}\le |x-y|\le {(1+|x|)}/{8}$. On the other hand, following the arguments of $J_{22}$, we obtain
\begin{align*}
J_{32}&\le \Ee_x\left[p(1,B_t,y)\I_{\{B_t\notin B(x,({1+|x|})/{3})\}}\right]=\int_{B(x,({1+|x|})/{3})^c}q(t,x,z)p(1,z,y)\,dz\\
&\le c_{22}\sup_{z\notin B(x,({1+|x|})/{3})}q(t,x,z)\cdot\int_{B(x,({1+|x|})/{3})^c}q(1,z,y)\,dz\\
&\le c_{23}t^{-d/2}\exp\left(-\frac{c_{24}(1+|x|)^2}{t}\right)\\
&\le c_{25}t^{-d/2}\exp\left(-\frac{c_{26}|x-y|^2}{t}\right)\exp\left(-\frac{c_{26}(1+|x|)^2}{t}\right),
\end{align*}
where in the last inequality we have used the fact that $C_0t^{1/2}\le |x-y|\le  {(1+|x|)}/{8}$ again.

Now, combining with all the estimates of $J_1,J_2,J_3$ above yields the desired conclusion \eqref{l4-3-1} for
the case that $|x-y|\ge C_0t^{1/2}$.
\end{proof}

Summarising Lemmas \ref{l4-2} and \ref{l4-3}, we get
\begin{proposition}\label{p4-1}
There exist positive constants $C_8$, $C_9$, $C_{10}$ and $C_{11}$ such that for all $t>2$ and $x,y\in \R^d$,
\begin{equation}\label{p4-1-1}
 p(t,x,y)\le C_{8}t^{- {d}/{2}}\exp\left(-\frac{C_9|x-y|^2}{t}\right)\exp\left(\max\left\{\frac{C_{10}t}{(1+|x|)^\alpha},C_{10}t-\frac{C_{11}(1+|x|^2)}{t}\right\}\right).
\end{equation}
\end{proposition}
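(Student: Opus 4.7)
The plan is to derive Proposition \ref{p4-1} as a direct consequence of Lemmas \ref{l4-2} and \ref{l4-3} via a case split based on whether the pair $(x,y)$ is near-diagonal or off-diagonal relative to the time scale $t^{1/2}$. Throughout, I keep the standing convention $|x|\le |y|$ stated at the beginning of the subsection, so the role of $|x|$ in the weighted exponential is precisely that of $\min\{|x|,|y|\}$, which is what makes the two lemmas compatible.

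Concretely, I first fix the threshold constant $C_0>0$ appearing in Lemma \ref{l4-3}. For $t>2$ and $|x-y|>C_0 t^{1/2}$, Lemma \ref{l4-3} already yields the desired upper bound, so the choice $C_9=C_5$, $C_{10}=C_6$, $C_{11}=C_7$, $C_8=C_4$ works in this regime (with the constants $C_i$ as they appear in Lemma \ref{l4-3}). For the complementary regime $|x-y|\le C_0 t^{1/2}$, I would start from Lemma \ref{l4-2}, which gives the bound
\begin{equation*}
p(t,x,y)\le C_1 t^{-d/2}\exp\!\left(\max\!\left\{\frac{C_2 t}{(1+|x|)^\alpha},\,C_2 t-\frac{C_3(1+|x|^2)}{t}\right\}\right),
\end{equation*}
and then artificially insert the Gaussian factor by noting that, under $|x-y|\le C_0 t^{1/2}$,
\begin{equation*}
1\le e^{C_9 C_0^2}\exp\!\left(-\frac{C_9|x-y|^2}{t}\right)
\end{equation*}
for any constant $C_9>0$. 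Multiplying Lemma \ref{l4-2} through by this trivial inequality produces an estimate of exactly the form required by \eqref{p4-1-1}.

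It then remains to reconcile the constants from the two regimes. I would set $C_9:=\min\{C_5,1\}$ (for instance), $C_{10}:=\max\{C_2,C_6\}$, $C_{11}:=\min\{C_3,C_7\}$, and $C_8:=\max\{C_1 e^{C_9 C_0^2},C_4\}$, so that both sub-estimates are dominated by the single expression on the right-hand side of \eqref{p4-1-1}. Since the structural form of the exponential weight
\begin{equation*}
\max\!\left\{\frac{C_{10}t}{(1+|x|)^\alpha},\,C_{10}t-\frac{C_{11}(1+|x|^2)}{t}\right\}
\end{equation*}
is monotone in $C_{10}$ and anti-monotone in $C_{11}$, enlarging $C_{10}$ or shrinking $C_{11}$ only enlarges the right-hand side, so this reconciliation is lossless.

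There is no genuine obstacle here: the two lemmas are designed precisely to cover the off-diagonal and near-diagonal regimes respectively, and all the delicate probabilistic work involving the exit domains $B(x,|x|/2)$, $B(y,|x-y|/3)$, and the nested balls $U_1\subset U_2$ has already been done in their proofs. The only mild point of care is to ensure that the standing assumption $|x|\le |y|$ is used consistently, so that $(1+|x|)^{-\alpha}\ge (1+|y|)^{-\alpha}$ and $|x|^2\le |y|^2$ make the bounds derived in terms of $|y|$ at intermediate stages of Lemma \ref{l4-3}'s proof (e.g.\ the estimate of $I_1$ in Case 1) pass to the stated form in $|x|$ without loss.
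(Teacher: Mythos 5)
Your proof is correct and is precisely the argument the paper has in mind: the paper gives no explicit proof of Proposition \ref{p4-1}, stating only that it follows by ``summarising Lemmas \ref{l4-2} and \ref{l4-3},'' which is exactly the case split on $|x-y|\lessgtr C_0 t^{1/2}$ with the Gaussian factor absorbed into the constant in the near-diagonal regime, and the constant reconciliation you spell out is routine and checks out.
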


Finally, we are concerned on the case that $\alpha\in (2,+\infty)$ and $K_2$ is small.

\begin{lemma}\label{l4-6}
For every $\alpha\in (2,+\infty)$, it holds that
\begin{equation}
\label{e:equ1}\begin{split} \int_{\R^d} \frac{s^{-d/2}}{(1+|z|)^\alpha}\exp\left(-\frac{|x-z|^2}{s}\right)\,dz
\le C_{12}\begin{cases}
\frac{1+\log (2+|x|)\I_{\{d=\alpha\}}}
{(1+|x|)^{\min\{d,\alpha\}}},\qquad \qquad \quad\,\,\, & s<(1+|x|)^2,\\
s^{-\min\{d,\alpha\}/2}\left(1+\log (1+s)\I_{\{d=\alpha\}}\right),\quad & s\ge (1+|x|)^2.
\end{cases}
\end{split}
\end{equation}
\end{lemma}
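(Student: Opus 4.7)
Let $r := 1+|x|$. The plan is to split into the two regimes $s < r^2$ and $s \ge r^2$, and within each regime decompose the $z$-integral into pieces controlled either by the polynomial factor $(1+|z|)^{-\alpha}$ or by the Gaussian tail. A convenient preliminary reduction is the elementary bound
\begin{equation*}
\sup_{s>0}\bigl(s^{-d/2}\exp(-r^2/(16s))\bigr) \asymp r^{-d},
\end{equation*}
which will be used to control the contribution of the Gaussian wherever $|x-z|$ is forced to be of order $r$.

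\textbf{Small-time regime $s<r^2$.} Assume first $|x|\ge 2$, so $r\asymp |x|$; the case $|x|\le 2$ is trivial since the target is then comparable to a constant and $\int s^{-d/2}\exp(-|x-z|^2/s)(1+|z|)^{-\alpha}\,dz \lesssim 1$ because $\alpha>2$. I decompose the integral as $\int = \int_{|z|\le r/2} + \int_{r/2<|z|\le 2r} + \int_{|z|>2r}$. On the inner region $|x-z|\ge r/4$, and combining the Gaussian maximum $\asymp r^{-d}$ with $\int_{|z|\le r/2}(1+|z|)^{-\alpha}\,dz$, which behaves like $r^{d-\alpha}$, $\log r$, or a constant according as $\alpha<d$, $\alpha=d$, or $\alpha>d$, yields a contribution of order $r^{-\min\{d,\alpha\}}(1+\log(2+|x|)\I_{\{d=\alpha\}})$. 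On the middle annulus $(1+|z|)^{-\alpha}\asymp r^{-\alpha}$ while the unrestricted Gaussian integral is bounded, giving $\lesssim r^{-\alpha}$. On the outer region $|x-z|\ge|z|/2$ and $(1+|z|)^{-\alpha}\le (1+2r)^{-\alpha}\lesssim r^{-\alpha}$, and the Gaussian again integrates to $O(1)$. Since $r^{-\alpha}\le r^{-\min\{d,\alpha\}}$, all three contributions are dominated by the claimed bound.

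\textbf{Large-time regime $s\ge r^2$.} Here $|x|\le\sqrt{s}-1\le\sqrt{s}$. I split according to $|x-z|\le\sqrt{s}$ or not. In the near part the Gaussian is bounded by $s^{-d/2}$ and $|z|\le |x|+\sqrt{s}\le 2\sqrt{s}$, so the contribution is at most
\begin{equation*}
s^{-d/2}\int_{|z|\le 2\sqrt{s}}(1+|z|)^{-\alpha}\,dz \asymp \begin{cases} s^{-\alpha/2}, & \alpha<d,\\ s^{-d/2}\log(1+s), & \alpha=d,\\ s^{-d/2}, & \alpha>d,\end{cases}
\end{equation*}
which matches the target. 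In the far part I change variables $z=x+\sqrt{s}\tilde z$, reducing to $\int_{|\tilde z|>1}(1+|x+\sqrt{s}\tilde z|)^{-\alpha}e^{-|\tilde z|^2}\,d\tilde z$. For $|\tilde z|\ge 2$ the triangle inequality together with $|x|\le\sqrt{s}$ gives $|x+\sqrt{s}\tilde z|\ge\sqrt{s}|\tilde z|/2$, so $(1+|x+\sqrt{s}\tilde z|)^{-\alpha}\lesssim s^{-\alpha/2}|\tilde z|^{-\alpha}$ and the $|\tilde z|$-integral converges, yielding $O(s^{-\alpha/2})$. For $1<|\tilde z|<2$ I change back to $w=x+\sqrt{s}\tilde z$, whose image is a shell of $w$-volume $\asymp s^{d/2}$; this produces another instance of $s^{-d/2}\int_{|w|\le 3\sqrt{s}}(1+|w|)^{-\alpha}\,dw$ and hence the same bound.

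\textbf{Main obstacle.} The delicate point is keeping the three sub-cases $\alpha<d$, $\alpha=d$, $\alpha>d$ organised so that the logarithmic factor appears exactly at the critical exponent $\alpha=d$ and nowhere else; in particular one must verify that the Gaussian gain $\sup_{s>0}s^{-d/2}e^{-cr^2/s}\asymp r^{-d}$ combines with the volume estimate for $\int_{|z|\le R}(1+|z|)^{-\alpha}\,dz$ to produce $r^{-\min\{d,\alpha\}}$ up to the correct logarithm. Once this bookkeeping is done, \eqref{e:equ1} follows by summing the three pieces in each regime.
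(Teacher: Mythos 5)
Your proposal is correct, and it reaches the same inequality by a genuinely different (if closely related) decomposition. In the small-time regime the two arguments are essentially the same in substance: the paper substitutes $\tilde z=(x-z)/\sqrt{s}$, splits on $|z|\lessgtr (1+|x|)/2$ and then sums over unit annuli in $|z|$ for the inner piece; you split on $|z|$ directly and replace the explicit annulus sum by the elementary volume estimate $\int_{|z|\le R}(1+|z|)^{-\alpha}\,dz\asymp R^{d-\alpha}$, $\log R$, or $1$, combined with $\sup_{s>0}s^{-d/2}e^{-cr^2/s}\asymp r^{-d}$. Your middle annulus $r/2<|z|\le 2r$ is actually handled identically to your outer region and could be merged with it, which is exactly what the paper's $I_2$ does. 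The real divergence is in the large-time regime $s\ge(1+|x|)^2$. There the paper keeps the split on $|z|\lessgtr(1+|x|)/2$, reuses its Case-1 bound for the inner piece (and absorbs the leftover $(r/\sqrt{s})^{\max\{d-\alpha,0\}}$ via $s\ge r^2$), and then for the outer piece sums unit annuli in $|z|$ with a further cut at $|z|\sim\sqrt{s}$. You instead cut on $|x-z|\lessgtr\sqrt{s}$, observe that the near piece has $|z|\le 2\sqrt{s}$ and feed it into the same volume estimate, and handle the far piece after the Gaussian substitution by a secondary cut at $|\tilde z|=2$ (needed since for $1<|\tilde z|<2$ the triangle inequality gives no useful lower bound on $|x+\sqrt{s}\tilde z|$ when $|x|\asymp\sqrt{s}$). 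That is a cleaner bookkeeping: one volume estimate carries both the near piece and the $1<|\tilde z|<2$ leftover, and the annulus sum is entirely hidden inside it. Both routes are fully elementary and equivalent in generality; yours trades the paper's explicit double annulus sum for a reusable volume lemma, which makes the three cases $\alpha<d$, $\alpha=d$, $\alpha>d$ fall out more transparently.
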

\begin{proof}
The proof is split into two cases.

{\bf Case 1: $s\le (1+|x|)^2$.}\,\,
By the change of variable with $\tilde z=(x-z)/\sqrt{s}$,
it holds that the left hand side of \eqref{e:equ1} equals to
\begin{align*}\int_{\R^d}\frac{1}{(1+|x-\sqrt{s}z|)^\alpha}\exp\left(-|z|^2\right)\,dz=&\int_{\{|x-\sqrt{s}z|\le (1+|x|)/2\}}\frac{1}{(1+|x-\sqrt{s}z|)^\alpha}\exp\left(-  |z|^2\right)\,dz\\
&+\int_{\{|x-\sqrt{s}z|\ge (1+|x|)/2\}}\frac{1}{(1+|x-\sqrt{s}z|)^\alpha}\exp\left(-  |z|^2\right)\,dz\\
=&:I_1+I_2.\end{align*}

It is clear that $$I_2\le \frac{c_1}{(1+|x|)^\alpha}\int_{\R^d} e^{- |z|^2}\,dz\le  \frac{c_2}{(1+|x|)^\alpha}.$$
For $I_1$, we apply the following fact $$\{z\in \R^d: |x-\sqrt{s}z|< (1+|x|)/2\}\subset \cup_{k=0}^{[(1+|x|)/2]}\{z\in \R^d: k\le |x-\sqrt{s}z|< k+1\}=:\cup_{k=0}^{[(1+|x|)/2]} A_k.$$ In particular,
$$|A_k|\le (c_3/\sqrt{s})((k+1)/\sqrt{s})^{d-1}=c_3(k+1)^{d-1}s^{-d/2}.$$
Moreover, for every $z\in A_k$, $|x|\ge 2$ and $0\le k\le [(1+|x|)/2]$,
$$|z|\ge \frac{|x|}{\sqrt{s}}-\frac{k+1}{\sqrt{s}}\ge \frac{c_4|x|}{\sqrt{s}},$$ which implies $e^{-|z|^2}\le e^{-c_5(1+|x|)^2/s}$. Thus, for all $|x|\ge 2$ and $0\le k\le [(1+|x|)/2]$,
\begin{align*}\int_{A_k}\frac{1}{(1+|x-\sqrt{s}z|)^\alpha}\exp\left(- |z|^2\right)\,dz\le &\frac{c_6}{(1+k)^\alpha}\frac{(k+1)^{d-1}}{s^{d/2}}e^{-c_5(1+|x|)^2/s},\end{align*} and so
\begin{equation}\label{e:eproof1}\begin{split} I_1\le &\sum_{k=0}^{[(1+|x|)/2]}\int_{A_k}\frac{1}{(1+|x-\sqrt{s}z|)^\alpha}\exp\left(-|z|^2\right)\,dz\\
\le&\sum_{k=0}^{[(1+|x|)/2]}\frac{c_6}{(1+k)^\alpha}\frac{(k+1)^{d-1}}{s^{d/2}}e^{-c_5(1+|x|)^2/s}\\
\le& c_7(1+|x|)^{-\min\{d,\alpha\}}
\left(1+\log (2+|x|)\I_{\{d=\alpha\}}\right)
\left(\frac{1+|x|}{\sqrt{s}}\right)^d e^{-c_5(1+|x|)^2/s}\\
 \le& c_8(1+|x|)^{-\min\{d,\alpha\}}\left(1+\log (2+|x|)\I_{\{d=\alpha\}}\right).\end{split} \end{equation}
Here the last inequality follows from the property that
\begin{align*}
\left(\frac{1+|x|}{\sqrt{s}}\right)^d e^{-c_5(1+|x|)^2/s}\le \sup_{a\ge 1}a^d e^{-c_5 a}\le c_9,
\end{align*}
where we have also used the fact $s\le (1+|x|)^2$.
Furthermore, it is easy to see that when $|x|\le2$, $I_1$ is bounded, and so \eqref{e:eproof1} still holds. Thus,
\eqref{e:eproof1} is satisfied for all $x\in \R^d$.

Combining with both estimates above for $I_1$ and $I_2$ yields the desired assertion.

{\bf Case 2: $s\ge (1+|x|)^2$.}\,\,
We still define $I_1$ and $I_2$ by the same way as that in the proof of {\bf Case 1}.
First, according to the third inequality in \eqref{e:eproof1}, we find that
\begin{align*}
I_1&\le c_1(1+|x|)^{-\min\{d,\alpha\}}\left(1+\log (1+|x|)\I_{\{d=\alpha\}}\right)\left(\frac{1+|x|}{\sqrt{s}}\right)^d e^{-c_2(1+|x|)^2/s}\\
&\le c_3 (1+|x|)^{ \max\{d-\alpha,0\}}\left(1+\log (1+|x|)\I_{\{d=\alpha\}}\right)s^{-d/2}\\
&\le c_3 s^{-\min\{d,\alpha\}/2}\left(1+\log (1+|x|)\I_{\{d=\alpha\}}\right)\left((1+|x|)/\sqrt{s}\right)^{\max\{d-\alpha,0\}}\\
&\le c_3 s^{-\min\{d,\alpha\}/2}\left(1+\log (1+|x|)\I_{\{d=\alpha\}}\right).
\end{align*}
Here the second and fourth inequality follow from the fact $s\ge (1+|x|)^2$.

Now, we turn to the estimate for $I_2$. For this, we write
\begin{align*}\{z\in \R^d:|x-\sqrt{s}z|\ge (1+|x|)/2\}\subset & \{z\in \R^d:(1+|x|)/2\le |x-\sqrt{s}z|\le 2(1+|x|)\}\\
& \cup \{z\in \R^d:  |x-\sqrt{s}z|\ge 2(1+|x|)\}\\
\subset & B_0 \cup ( \cup_{k=[2(1+|x|)]}^\infty B_k),\end{align*} where $B_0=\{z:(1+|x|)/2\le |x-\sqrt{s}z|\le 2(1+|x|)\}$ and $B_k=\{z: k\le |x-\sqrt{s}z|< k+1\}$ for $k\ge [2(1+|x|)]$. Set
$$I_{20}:= \int_{B_0}\frac{1}{(1+|x-\sqrt{s}z|)^\alpha}\exp\left(-|z|^2\right)\,dz,\quad I_{2k}=\int_{B_k}\frac{1}{(1+|x-\sqrt{s}z|)^\alpha}\exp\left(-|z|^2\right)\,dz.$$
Since $|B_0|\le c_4(1+|x|)^d/s^{d/2}$, $$I_{20}\le c_5 (1+|x|)^{d-\alpha}/s^{d/2}\le c_6 s^{-\min\{d,\alpha\}/2}\left((1+|x|)/\sqrt{s}\right)^{\max\{d-\alpha,0\}}
\le c_6s^{-\min\{d,\alpha\}/2}.$$ On the other hand,
for any $k\ge [2(1+|x|)]$ and $z\in B_k$ it holds that $|z|\ge k/\sqrt{s}-|x|/ \sqrt{s} \ge c_7 k/\sqrt{s}$. Thus,
$$I_{2k}\le c_8 (1+k)^{-\alpha}e^{-c_9 k^2/s}|B_k|\le c_{10}(1+k)^{-\alpha} (k/\sqrt{s})^{d-1}(1/\sqrt{s}) e^{-c_{11} k^2/s},$$ and so
\begin{align*} \sum_{k=[2(1+|x|)]}^\infty I_{2k}&\le c_{10} \sum_{k=[2(1+|x|)]}^\infty  (1+k)^{-\alpha} (k/\sqrt{s})^{d-1}(1/\sqrt{s} )e^{-c_{11}k^2/s} \\
&=c_{10}\left(\sum_{k=[2(1+|x|)]}^{2[\sqrt{s/c_{11}}]} \,\, +\,\, \sum_{2[\sqrt{s/c_{11}}]+1}^\infty\right) (1+k)^{-\alpha} (k/\sqrt{s})^{d-1}(1/\sqrt{s}) e^{-c_{11}  k^2/s}\\
&\le c_{12} \left(s^{-\min\{d,\alpha\}/2}\left(1+\log (1+s)\I_{\{d=\alpha\}}\right)+s^{-\alpha/2}\right)\\
&\le c_{13}s^{-\min\{d,\alpha\}/2}\left(1+\log (1+s)\I_{\{d=\alpha\}}\right).
\end{align*}
Therefore, putting all the estimates above together, we arrive at the desired assertion again.
\end{proof}

\begin{lemma}\label{lemma4.2}
Suppose that $\alpha\in (2,+\infty)$ and $d\ge 3$. For any $0<a<b$, there is a constant $C_{13}>0$ so that for all $x,y\in \R^d$,
\begin{equation}\label{l4-7-1}
\begin{split}
&\int_0^t\int_{\R^d} \frac{1}{(t-s)^{d/2}}\exp\left(-\frac{b|x-z|^2}{t-s}\right)\frac{1}{(1+|z|)^\alpha}\frac{1}{s^{d/2}}\exp\left(-\frac{a|z-y|^2}{s}\right)\,dz\,ds\\
&\le C_{13}t^{-d/2}\exp\left(-\frac{a|x-y|^2}{t}\right).
\end{split}
\end{equation}
\end{lemma}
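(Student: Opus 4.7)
The plan is to extract the Gaussian factor $e^{-a|x-y|^2/t}$ by completing the square in $z$, apply Lemma \ref{l4-6} to the resulting $z$-integral, and then estimate the remaining $s$-integral using the hypothesis $\alpha>2$. Setting $A(s):=at+(b-a)s$ and $z_0(s):=(bsx+a(t-s)y)/A(s)$, completion of the square yields the exact identity
\begin{equation*}
b\frac{|x-z|^2}{t-s}+a\frac{|z-y|^2}{s}=\frac{ab}{A(s)}|x-y|^2+\frac{A(s)}{s(t-s)}|z-z_0(s)|^2,
\end{equation*}
together with the elementary algebraic identity $\frac{ab}{A(s)}=\frac{a}{t}+\frac{a(b-a)(t-s)}{tA(s)}$. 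The hypothesis $b>a$ is precisely what forces this extra term to be nonnegative, which is what allows us to keep the coefficient $a$ intact in front of $|x-y|^2/t$.

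Substituting this identity, writing $(s(t-s))^{d/2}=A(s)^{d/2}\sigma(s)^{d/2}$ with $\sigma(s):=s(t-s)/A(s)$, and using $at\le A(s)\le bt$ together with $e^{-a(b-a)(t-s)|x-y|^2/(tA(s))}\le 1$, the left-hand side of \eqref{l4-7-1} is bounded by
\begin{equation*}
C\,t^{-d/2}e^{-a|x-y|^2/t}\int_0^t\biggl[\int_{\R^d}\sigma(s)^{-d/2}e^{-|z-z_0(s)|^2/\sigma(s)}\frac{dz}{(1+|z|)^\alpha}\biggr]ds.
\end{equation*}
Applying Lemma \ref{l4-6} with $\sigma(s)$ in the role of $s$ and $z_0(s)$ in the role of $x$ bounds the bracketed inner integral by $C\,\Phi(z_0(s),\sigma(s))$, where $\Phi(u,\sigma)$ denotes the right-hand side of \eqref{e:equ1}. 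It therefore remains to prove the uniform bound $\int_0^t\Phi(z_0(s),\sigma(s))\,ds\le C$, with $C$ independent of $x,y,t$.

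The main obstacle is this final $s$-integral bound, and it is exactly where the hypothesis $\alpha>2$ (together with $d\ge 3$) enters. Set $p:=\min(d,\alpha)>2$. After absorbing the logarithmic factors of \eqref{e:equ1} in the borderline case $d=\alpha$ into a slightly smaller effective exponent $p'\in(2,p]$ via $\log(1+u)\le C_\varepsilon u^\varepsilon$, one obtains the crude inequality $\Phi(z_0,\sigma)\le C\min(\sigma^{-p'/2},(1+|z_0|)^{-p'})\le C\min(\sigma^{-p'/2},1)$. Since $\sigma(s)\asymp s$ near $s=0$ and $\sigma(s)\asymp t-s$ near $s=t$, the set $\{s\in(0,t):\sigma(s)<1\}$ is a union of two subintervals whose total length is $O(1)$ uniformly in $t$, and therefore contributes only $O(1)$ to the integral. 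On the complementary set where $\sigma(s)\ge 1$, the lower bound $\sigma(s)\ge s(t-s)/(bt)$ gives
\begin{equation*}
\int_{\{\sigma\ge 1\}}\sigma(s)^{-p'/2}\,ds\le C\,t^{p'/2}\int_{\{\sigma\ge 1\}}(s(t-s))^{-p'/2}\,ds,
\end{equation*}
and the strict inequality $p'>2$ renders this integral uniformly bounded in $t$ (its value is dominated by the boundary behavior of the restricted range of integration, where $s\sim c$ or $t-s\sim c$). Combining these contributions yields $\int_0^t\Phi(z_0(s),\sigma(s))\,ds\le C$, which completes the proof.
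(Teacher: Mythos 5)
Your proof is correct, and it takes a genuinely different route from the paper's. The paper proves Lemma~\ref{lemma4.2} by using the weaker pointwise inequality $\frac{|x-z|^2}{t-s}+\frac{|z-y|^2}{s}\ge\frac{|x-y|^2}{t}$ and then splitting the double integral into several pieces: first in time at $s=(1-\theta)t$, with $\theta$ chosen so that $(1-(a/b)^{1/2})^2/(2(1-\theta))=1$, and then further in $z$ according to whether $|z-x|\gtrless|x-y|(a/b)^{1/2}$. Each piece ($J_2$, $J_{11}$, $J_{12}$) is estimated separately via Lemma~\ref{l4-6}, with the gap $b>a$ exploited indirectly through the choice of $\theta$ and the spatial split. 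You instead complete the square in $z$ exactly, which yields the identity $b\frac{|x-z|^2}{t-s}+a\frac{|z-y|^2}{s}=\frac{a|x-y|^2}{t}+\frac{a(b-a)(t-s)|x-y|^2}{tA(s)}+\frac{|z-z_0(s)|^2}{\sigma(s)}$ with a manifestly nonnegative residual middle term; this isolates the target Gaussian factor in a single stroke, replaces the case analysis by a uniform application of Lemma~\ref{l4-6} at the moving center $z_0(s)$ and variance $\sigma(s)=s(t-s)/A(s)$, and reduces the whole lemma to the scalar bound $\int_0^t\Phi(z_0(s),\sigma(s))\,ds\le C$, which you correctly dispatch using $\min(d,\alpha)>2$ and the observation that $\sigma(s)\asymp s$ near $0$ and $\asymp t-s$ near $t$. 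Your route is cleaner and makes the role of the hypothesis $b>a$ transparent (it is precisely what makes the residual term nonnegative, and separately what gives $A(s)\ge at$), whereas the paper's approach is the more standard Duhamel-iteration estimate in the spirit of \cite[Lemma 3.1]{Z3}; both ultimately lean on Lemma~\ref{l4-6} and the integrability condition $\min(d,\alpha)>2$. One small stylistic suggestion: at the very end it would be cleaner to state explicitly that $\{\sigma(s)\ge 1\}\subset\{s(t-s)\ge at\}$ and that on the half $s\le t/2$ this forces $s\ge a$, so $\int_{\{\sigma\ge1,\,s\le t/2\}}(s(t-s))^{-p'/2}\,ds\le (t/2)^{-p'/2}\int_a^\infty s^{-p'/2}\,ds\lesssim t^{-p'/2}$; together with the prefactor $(bt)^{p'/2}$ this makes the uniform-in-$t$ boundedness explicit rather than relying on the phrase ``dominated by the boundary behavior.''
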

\begin{proof}
The proof is partly inspired by that of \cite[Lemma 3.1]{Z3}. We write
\begin{align*}&\int_0^t\int_{\R^d} \frac{1}{(t-s)^{d/2}}\exp\left(-\frac{b|x-z|^2}{t-s}\right)\frac{1}{(1+|z|)^\alpha}\frac{1}{s^{d/2}}\exp\left(-\frac{a|z-y|^2}{s}\right)\,dz\,ds\\
&=\int_0^{(1-\theta)t}\int_{\R^d} \,\,+\,\,\int_{(1-\theta)t}^t\int_{\R^d}\cdots \,dz\,ds=:J_1+J_2,\end{align*} where $\theta\in (0,1)$ is a constant to be determined later.
By \cite[(3.4) in Lemma 3.1]{Z3}, $$\frac{|x-z|^2}{t-s}+\frac{|z-y|^2}{s}\ge \frac{|x-y|^2}{t},\quad x,y,z\in \R^d,0<s\le t.$$ Then, according to \eqref{e:equ1},
\begin{align*}
J_2=&\int_{(1-\theta)t}^t\int_{\R^d} \frac{1}{(t-s)^{d/2} }\exp\left(-\frac{(b-a)|x-z|^2}{t-s}\right)\frac{1}{(1+|z|)^\alpha}\frac{1}{s^{d/2}}\exp\left(-a\left(\frac{|z-y|^2}{s}+\frac{|x-z|^2}{t-s}\right)\right) \,dz\,ds\\
\le &\frac{1}{((1-\theta)t)^{d/2}}\exp\left(-\frac{a|x-y|^2}{t}\right)\int_{(1-\theta)t}^t\int_{\R^d} \frac{1}{(t-s)^{d/2}}\exp\left(-(b-a)\frac{|x-z|^2}{t-s}\right)\frac{1}{(1+|z|)^\alpha}\,dz\,ds\\
\le & \frac{c_1}{((1-\theta)t)^{d/2}}\exp\left(-\frac{a|x-y|^2}{t}\right)\\
    &  \times\int_{(1-\theta)t}^t \Bigg(\frac{\log (2+|x|)}{(1+|x|)^{\min\{d,\alpha\}}}\I_{\{(1+|x|)^2\ge t-s\}} +(t-s)^{-\min\{d,\alpha\}/2}\log (1+t-s)\I_{\{(1+|x|)^2\le t-s\}}\Bigg)\,ds\\
\le & c_2 t^{-d/2}\exp\left(-a\frac{|x-y|^2}{t}\right),
\end{align*}
where in the last inequality  we have used fact that $\min\{d,\alpha\}>2$.

On the other hand,  it holds that
\begin{align*}
J_1= &\int_0^{(1-\theta)t}\int_{\{|z-x|\ge |x-y|(a/b)^{1/2}\}} \frac{1}{(t-s)^{d/2}}\exp\left(-\frac{b|x-z|^2}{t-s}\right)\frac{1}{(1+|z|)^\alpha}\frac{1}{s^{d/2}}\exp\left(-\frac{a|z-y|^2}{s}\right)\,dz\,ds\\
&+ \int_0^{(1-\theta)t}\int_{\{|z-x|\le |x-y|(a/b)^{1/2}\}} \frac{1}{(t-s)^{d/2}}\exp\left(-\frac{b|x-z|^2}{t-s}\right)\frac{1}{(1+|z|)^\alpha}\frac{1}{s^{d/2}}\exp\left(-\frac{a|z-y|^2}{s}\right)\,dz\,ds\\
=&:J_{11}+J_{12}.\end{align*}
If $|z-x|\ge |x-y|(a/b)^{1/2}$ and $s\in (0,(1-\theta)t)$, then
\begin{align*}
\frac{1}{(t-s)^{d/2}}\exp\left(-\frac{b|x-z|^2}{t-s}\right)&\le \frac{c_3}{t^{d/2}}\exp\left(-\frac{a|x-y|^2}{t}\right).
\end{align*}
Thus,
\begin{align*} J_{11}\le & c_5t^{-d/2}\exp\left(-\frac{a|x-y|^2}{t}\right)\int_0^{(1-\theta)t}\int_{\R^d} s^{-d/2}\exp\left(-\frac{a|z-y|^2}{s}\right)\frac{1}{(1+|z|)^\alpha}\,dz\,ds\\
\le & c_6t^{-d/2}\exp\left(-\frac{a|x-y|^2}{t}\right)\int_0^{(1-\theta)t}
\bigg(\frac{\log (2+|y|)}{(1+|y|)^{\min\{d,\alpha\}}}\I_{\{s\le (1+|y|)^2\}}\\
&\qquad\qquad\qquad \qquad\qquad\qquad \qquad\qquad+
s^{-\min\{d,\alpha\}/2}
\log(1+s)\I_{\{s>(1+|y|)^2\}}\bigg)\,ds\\
\le& c_7t^{-d/2}\exp\left(-\frac{a|x-y|^2}{t}\right).
\end{align*}

 If $|z-x|\le |x-y|(a/b)^{1/2}$, then
 $|z-y|\ge |x-y|-|z-x|\ge |x-y|\left(1-(a/b)^{1/2}\right).$ Hence, for all $0<s\le (1-\theta)t$,
\begin{align*}\exp\left(-\frac{a|z-y|^2}{s}\right)\le \exp\left(-\frac{a|z-y|^2}{2s}\right)\exp\left(-a(1-(a/b)^{1/2})^2\frac{|x-y|^2}{2(1-\theta)t}\right).
\end{align*}
Now, we choose $\theta\in (0,1)$ such that
$$\frac{(1-(a/b)^{1/2})^2}{2(1-\theta)}=1.$$ So it holds that for any $|z-x|\le |x-y|(a/b)^{1/2}$ and $0<s\le (1-\theta)t$,
$$\exp\left(-\frac{a|z-y|^2}{s}\right)\le  \exp\left(-\frac{a|z-y|^2}{2s}\right)\exp\left(-\frac{a|x-y|^2}{t}\right).$$ This along with \eqref{e:equ1} yields that
\begin{align*}
J_{12}\le & c_{12} (\theta t)^{-d/2}\exp\left(-\frac{a|x-y|^2}{t}\right)\int_0^{(1-\theta)t}\int_{\R^d}\exp\left(-\frac{a|z-y|^2}{2s}\right)s^{-d/2}\frac{1}{(1+|z|)^\alpha}\,dz\,ds\\
\le & c_{13} t^{-d/2}\exp\left(-\frac{a|x-y|^2}{t}\right)
\int_0^{(1-\theta)t}\bigg(\frac{\log (2+|y|)}{(1+|y|)^{\min\{d,\alpha\}}}\I_{\{s\le (1+|y|)^2\}}\\
&\qquad\qquad\qquad\qquad\qquad\qquad\qquad\qquad+s^{-\min\{d,\alpha\}/2}\log(1+s)\I_{\{s>(1+|y|)^2\}}\bigg)\,ds\\
\le & c_{14} t^{-d/2}\exp\left(-\frac{a|x-y|^2}{t}\right).
\end{align*}

Combining all the above estimates together yields the desired conclusion.
\end{proof}

\begin{proposition}\label{p4-3} Suppose that \eqref{e1-1a-} holds with some $\alpha\in (2,+\infty)$ and $K_1,K_2>0$. Then, there exists a constant $K_0>0$ such that if $K_2\le K_0$,
then for any $x,y\in \R^d$ and $t>0$,
\begin{equation}\label{p4-3-1}
p(t,x,y)\asymp q(t,x,y).
\end{equation}
\end{proposition}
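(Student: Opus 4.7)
The lower bound $p(t,x,y)\geq q(t,x,y)$ is immediate: under \eqref{e1-1a-} we have $V\leq 0$, so $\exp(-\int_0^tV(B_s)\,ds)\geq 1$ in the Feynman-Kac formula \eqref{e1-4}. The entire content of Proposition \ref{p4-3} is thus the matching Gaussian upper bound, and my plan is the classical Duhamel/Dyson perturbation series, tailored to the decay rate $(1+|z|)^{-\alpha}$ through the space-time convolution estimate of Lemma \ref{lemma4.2}.

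The first step is to record the Duhamel identity
\[
p(t,x,y) \;=\; q(t,x,y) \;+\; \int_0^t\int_{\R^d} q(t-s,x,z)\,(-V(z))\,p(s,z,y)\,dz\,ds, \qquad t>0,\;x,y\in\R^d,
\]
which is legitimate since $V$ is bounded. Iterating Duhamel produces the Dyson expansion $p=\sum_{n\geq 0}u_n$ with $u_0=q$ and
\[
u_{n+1}(t,x,y) \;=\; \int_0^t\int_{\R^d} q(t-s,x,z)\,(-V(z))\,u_n(s,z,y)\,dz\,ds;
\]
its convergence and identification with $p$ are standard for bounded $V$. Next I fix $a\in(0,1/2)$ (say $a=1/4$) and set $\tilde q_a(t,x,y):=t^{-d/2}\exp(-a|x-y|^2/t)$; since $a\leq 1/2$, $q(t,x,y)\leq(2\pi)^{-d/2}\tilde q_a(t,x,y)$. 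Using $-V(z)\leq K_2(1+|z|)^{-\alpha}$ from \eqref{e1-1a-} together with Lemma \ref{lemma4.2} applied with $b=1/2$ furnishes a constant $C_\star=(2\pi)^{-d/2}C_{13}=C_\star(d,\alpha)>0$ so that, by a routine induction on $n$,
\[
u_n(t,x,y) \;\leq\; (2\pi)^{-d/2}\,(K_2\,C_\star)^n\,\tilde q_a(t,x,y), \qquad n\geq 0.
\]

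Setting $K^*:=1/(2C_\star)$ and assuming $K_2\leq K^*$, the geometric series $\sum_n(K_2C_\star)^n$ converges and summing yields $p(t,x,y)\leq 2(2\pi)^{-d/2}\tilde q_a(t,x,y)$. Since $\tilde q_a(t,x,y)$ is comparable to $q(t/(2a),x,y)$ up to a multiplicative constant depending only on $a$ and $d$, this is exactly a bound of the form $p(t,x,y)\leq c_3\,q(c_4 t,x,y)$; combined with the lower bound it gives $p(t,x,y)\asymp q(t,x,y)$.

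The heavy lifting is Lemma \ref{lemma4.2}, whose proof is already in the excerpt: the convolution estimate encoding the decay of $(1+|z|)^{-\alpha}$ at spatial infinity when $\alpha>2$ and $d\geq 3$ is exactly what allows the iteration to close into a geometric series (this is why $\alpha>2$ and $d\geq 3$ are essential here). With that lemma in hand, the only remaining subtleties are (i) justifying that the Dyson series actually converges to $p$ rather than to a mere majorant, which is routine for bounded $V$, and (ii) tracking $C_\star$ accurately enough that $K^*=1/(2C_\star)$ becomes a concrete constant depending only on $d$ and $\alpha$; fixing $a=1/4$ once and for all removes the last degree of freedom and pins down $K^*$.
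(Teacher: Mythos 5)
Your proposal is correct and follows essentially the same route as the paper: both use the Duhamel/Dyson perturbation series with the space--time convolution estimate of Lemma \ref{lemma4.2} (applied with $a=1/4$, $b=1/2$) to close a geometric series in $K_2$. If anything, your bookkeeping is a bit more careful than the paper's: by introducing $\tilde q_a$ and tracking the $(2\pi)^{-d/2}$ prefactor explicitly, you get the induction to close cleanly with $C_\star=(2\pi)^{-d/2}C_{13}$, whereas the paper's $c_1$ (introduced via the $n=1$ case as $c_1=(2\pi)^{-d}C_{13}$) is strictly smaller than the constant $(2\pi)^{-d/2}C_{13}$ actually needed in the inductive step, so their hypothesis \eqref{p4-3-2} as written does not quite propagate; your formulation fixes that harmless slip.
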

\begin{proof}
Since $V(z)\le 0$, we have $p(t,x,y)\ge q(t,x,y)$, so it suffices to prove the upper bound in \eqref{p4-3-1}.

According to the classical Duhamel formula (see \cite{BDS}), we have the following expression for $p(t,x,y)$:
$$p(t,x,y)=q(t,x,y)+\int_0^t\int_{\R^d} q(t-s,x,z)(-V(z))p(s,z,y)\,dz\,ds.$$ Set
$p_0(t,x,y)=q(t,x,y),$ and \begin{equation}\label{e:proof0}p_n(t,x,y)=\int_0^t\int_{\R^d} q(t-s,x,z)(-V(z))p_{n-1}(s,z,y)\,dz\,ds,\quad n\ge 1.\end{equation} Then, for any $t>0$ and $x,y\in \R^d$,
\begin{equation}\label{e:proof01}p(t,x,y)=\sum_{n=0}^\infty p_n(t,x,y).\end{equation}

Suppose that \eqref{e1-1a-} holds. According to \eqref{l4-7-1},  we obtain immediately that (by taking $a={1}/{4}$ and $b={1}/{2}$)
\begin{align*}
p_1(t,x,y)\le c_1K_2t^{-d/2}\exp\left(-\frac{|x-y|^2}{4t}\right),\quad t>0,\ x,y\in \R^d.
\end{align*}

Next, assume that
\begin{equation}\label{p4-3-2}
p_n(t,x,y)\le (c_1K_2)^n t^{-d/2}\exp\left(-\frac{|x-y|^2}{4t}\right),\quad t>0,\ x,y\in \R^d.
\end{equation}
Then, putting \eqref{p4-3-2}, \eqref{e1-1a-}, \eqref{e:proof0} and \eqref{l4-7-1} together, we derive
\begin{align*}
p_{n+1}(t,x,y)&\le (c_1K_2)^n K_2\int_0^t \int_{\R^d}
\frac{1}{(t-s)^{d/2}}\exp\left(-\frac{|x-z|^2}{2(t-s)}\right)\frac{1}{(1+|z|)^\alpha}\frac{1}{s^{d/2}}\exp\left(-\frac{|z-y|^2}{4s}\right)\,dz\,ds\\
&\le (c_1K_2)^{n+1}t^{-d/2}\exp\left(-\frac{|x-y|^2}{4t}\right),\quad t>0,\ x,y\in \R^d.
\end{align*}
Hence, by the induction procedure, we know that \eqref{p4-3-2} holds  for all $n\ge 1$.

Therefore, when $K_2<K_0:=c_1^{-1}$, it holds that
\begin{align*}
p(t,x,y)\le t^{-d/2}\exp\left(-\frac{|x-y|^2}{4t}\right)\cdot \left(\sum_{n=0}^\infty (c_1K_2)^n\right)\le \frac{1}{1-c_1K_2}t^{-d/2}\exp\left(-\frac{|x-y|^2}{4t}\right).
\end{align*}
The proof is complete.
\end{proof}

\subsection{Lower bound}
\begin{lemma}\label{l4-4}
There exists a positive constant $K_0$ so that if \eqref{e1-1a-} holds with $K_1\ge K_0$, then for
any $C_0>0$ there exist
positive constants $C_{1}$, $C_{2}$ and $C_3$ such that for all $t>2$ and $x,y\in \R^d$ with $|x-y|\le C_0t^{1/2}$,
\begin{equation}\label{l4-4-1}
 p(t,x,y)\ge C_{1}t^{- {d}/{2}}\exp\left(\max\left\{\frac{C_2 t}{(1+|x|)^\alpha}, C_2t-\frac{C_3(1+|x|^2)}{t}\right\}\right).
\end{equation}
Moreover, if \eqref{e1-1a-} is satisfied with $\alpha\in (0,2)$ and $K_1,K_2\in (0,+\infty)$ $($without any restriction on the lower bound of $K_1)$, then
\eqref{l4-4-1} still holds for $p(t,x,y)$.
\end{lemma}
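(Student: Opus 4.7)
The plan is to establish both terms in the maximum separately, each in its natural regime, via the Feynman-Kac formula combined with the Dirichlet heat kernel estimate \eqref{l2-1-2}. For the first term $C_2 t/(1+|x|)^\alpha$, I would localize in $U:=B(x,(1+|x|)/4)$, on which \eqref{e1-1a-} gives $-V\ge c_1(1+|x|)^{-\alpha}$. Restricting the Feynman-Kac formula to paths with $\{\tau_U>t\}$ yields $p(t,x,y)\ge e^{c_1 t/(1+|x|)^\alpha}\,q_U(t,x,y)$, and for $t\le c_0(1+|x|)^2$ (with $c_0$ small enough relative to $C_0$) and $|x-y|\le C_0 t^{1/2}$, \eqref{l2-1-2} supplies $q_U(t,x,y)\ge c\,t^{-d/2}$; the small-$|x|$ case is handled by \eqref{note-}. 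This settles the first term, which dominates the maximum in this regime.

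For the second term $C_2 t-C_3(1+|x|^2)/t$, genuine exponential growth in $t$ is needed. I would first construct a bounded ball $B_*:=B(0,R_*)$ such that the Dirichlet Schr\"odinger operator has a strictly negative principal eigenvalue there, i.e., $K_1/(1+R_*)^\alpha>\lambda_1(R_*)$, where $\lambda_1(R_*)\asymp R_*^{-2}$ is the Dirichlet eigenvalue of $-\tfrac12\Delta$ on $B_*$. For $\alpha\in(0,2)$ any $K_1>0$ works by taking $R_*$ large (since $R_*^{-\alpha}$ dominates $R_*^{-2}$), whereas for $\alpha\ge 2$ the choice $R_*=1$ forces the hypothesis $K_1\ge K_0$. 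Combining the Feynman-Kac inequality with the long-time Dirichlet lower bound for Brownian motion on $B_*$ then yields
\begin{equation*}
 p^{V,B_*}(s,z,z')\ge c\,e^{\lambda_0 s},\qquad z,z'\in B(0,R_*/2),\ s\ge s_0,
\end{equation*}
with $\lambda_0:=K_1/(1+R_*)^\alpha-\lambda_1(R_*)>0$.

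Next is a chain step. I would split $t=t_1+t_2+t_3$ with $t_1=t_3=\alpha_0 t$ and $t_2=(1-2\alpha_0)t$ for a small parameter $\alpha_0\in(0,1/2)$, and restrict $z,z'\in B(0,R_*/2)$ in the semigroup identity to obtain
\begin{equation*}
 p(t,x,y)\ge\int\!\!\int p(t_1,x,z)\,p^{V,B_*}(t_2,z,z')\,p(t_3,z',y)\,dz\,dz'.
\end{equation*}
Since $V\le0$, the outer factors are bounded below by the free Gaussians $q(t_i,\cdot,\cdot)$; combined with the middle exponential factor and the estimate $|y|^2\le 2|x|^2+2C_0^2 t$, the resulting exponent is at least $\bigl(\lambda_0(1-2\alpha_0)-cC_0^2/\alpha_0\bigr)t-c'(1+|x|^2)/(\alpha_0 t)$. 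Choosing $\alpha_0$ small enough depending on $C_0$ and $\lambda_0$ keeps the coefficient of $t$ strictly positive, and the polynomial mismatch $t^{-d}$ versus the target $t^{-d/2}$ is absorbed into the exponential by a slight reduction of $C_2$ for $t$ large; small $t$ uses \eqref{note-} directly.

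The main obstacle, as I see it, is ensuring that the chain exponent genuinely produces the correction $(1+|x|^2)/t$ rather than $(1+|x|)^2$ in the regime where $|x|^2$ is comparable to $t$: the time splits must be chosen as fractions of $t$ (not of $(1+|x|)^2$), and the trade-off between $\alpha_0$ and $C_0$ must be managed so that the effective $C_2$ remains positive. A secondary technical point is the explicit derivation of the exponential lower bound on $p^{V,B_*}$ via Feynman-Kac combined with long-time Brownian Dirichlet asymptotics, since this keeps the argument within the paper's probabilistic framework and avoids invoking a spectral expansion.
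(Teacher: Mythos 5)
Your proposal is correct and follows essentially the same strategy as the paper's proof: Feynman--Kac localization near $x$ (with the exit time from a ball around $x$) captures the $t/(1+|x|)^\alpha$ branch when $t\lesssim(1+|x|)^2$; exponential growth is extracted by localizing in a ball $B(0,R)$ around the origin, choosing $R$ large when $\alpha\in(0,2)$ so that $K_1/(1+R)^\alpha$ beats the Dirichlet decay rate $\asymp R^{-2}$, and requiring $K_1\ge K_0$ with a fixed ball when $\alpha\ge 2$; and the general case $|x|\ge 2$, $|x-y|\le C_0 t^{1/2}$ is obtained by chaining $x\to B(0,R)\to y$ with free Gaussian lower bounds on the outer factors --- exactly the paper's Case 1/Case 2 and its three-fold semigroup chain. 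The only substantive difference is framing: you phrase the growth rate via the spectral gap $\lambda_0=K_1/(1+R_*)^\alpha-\lambda_1(R_*)$, whereas the paper reads the same information directly off the Dirichlet heat kernel two-sided bound \eqref{l2-1-2}.

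One small algebraic remark: since $z,z'$ range over the fixed bounded ball $B(0,R_*/2)$, the estimate $|y|^2\le 2|x|^2+2C_0^2t$ contributes only a \emph{constant} $-cC_0^2/\alpha_0$ to the exponent (from $\exp(-c|y-z'|^2/(\alpha_0 t))$), not the $t$-linear term $-cC_0^2t/\alpha_0$ you wrote; hence the coefficient of $t$ is simply $\lambda_0(1-2\alpha_0)>0$ for any fixed $\alpha_0\in(0,1/2)$, and there is no genuine trade-off between $\alpha_0$ and $C_0$ to manage. The ``main obstacle'' you flagged therefore does not arise, and the proof goes through as written.
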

\begin{proof} The proof is split into two cases.

{\bf Case 1: $\alpha\in (0,+\infty)$.}\,\,
When $|x|\le |y|\le 2$,  set $U:=B(0,4)$.
For any $f\in B_b(\R^d)$ with {\rm supp}$[f]\subset B(0,2)$ and $t\ge 1/3$, it holds that
 \begin{equation}\label{l4-4-1a}
 \begin{split}
 T_t^Vf(x)=&\Ee_x\left[f(B_t)\exp\left(- \int_0^tV(B_s)\,ds\right)\right]\ge \Ee_x\left[f(B_t)\exp\left( -\int_0^tV(B_s)\,ds\right)\I_{\{t<\tau_U\}}\right] \\
 \ge&\exp\left(t\inf_{z\in U}(-V(z))\right)\Ee_x\left[f(B_t)\I_{\{t<\tau_U\}}\right] \ge
 \exp\left(\frac{K_1t}{5^\alpha}\right)
 \int_{B(0,2)}q_U(t,x,z)f(z)\,dz\\
 \ge& c_1t^{-d/2}\exp\left(\frac{K_1t}{5^\alpha}-c_2t\right)\int_{B(0,2)}f(z)\,dz,
 \end{split}
 \end{equation}
 where in the third inequality we have used the fact
 $\inf_{z\in B(0,4)}(-V(z))\ge {K_1}{5^{-\alpha}}
 $
 that can be seen from \eqref{e1-1a-},
 and the last inequality follows from \eqref{l2-1-2}.
Hence, when $K_1\ge K_0:=2c_25^\alpha$,
for  every  $f\in B_b(\R^d)$ with {\rm supp}$[f]\subset B(0,2)$ and $t\ge 1/3$ we have
\begin{align*}
 T_t^Vf(x)&\ge c_3t^{-d/2}e^{c_2t}\int_{B(0,2)} f(z)\,dz.
 \end{align*}
 This implies that
 \begin{align}\label{l4-4-2}
p(t,x,y)\ge c_3t^{-d/2}e^{c_2t},\quad x,y\in B(0,2),\ t\ge 1/3.
 \end{align}

Now assume that $|x|\ge 2$ and $|x-y|\le C_0t^{1/2}$. Since $V\le 0$, for all $u\in B(x,1)$ and $z\in B(0,2)$,
\begin{equation}\label{l4-4-3}
p\left(\frac{t}{3},u,z\right)\ge q\left(\frac{t}{3},u,z\right)\ge c_4t^{-d/2}\exp\left(-\frac{c_5|u-z|^2}{t}\right)\ge c_4t^{-d/2}\exp\left(-\frac{c_6(1+|x|^2)}{t}\right).
\end{equation}
According to \eqref{l4-4-2} and \eqref{l4-4-3}, we find that for every $u\in B(x,1)$ and $t>1$,
\begin{align*}
p(t,x,u)&\ge \int_{B(0,2)}\int_{B(0,2)}p\left(\frac{t}{3},x,z_1\right)p\left(\frac{t}{3},z_1,z_2\right)p\left(\frac{t}{3},z_2,u\right)
\,dz_1\,dz_2\\
&\ge c_7t^{-3d/2}\exp\left(c_8t-\frac{c_9(1+|x|^2)}{t}\right)\ge c_{10}\exp\left(c_{11}t-\frac{c_9(1+|x|^2)}{t}\right).
\end{align*}
Hence, we know immediately that for every $t>2$
\begin{equation}\label{l4-4-4}
\begin{split}
p(t,x,y)&\ge \int_{B(x,1)}p\left(\frac{t}{2},x,z\right)
p\left(\frac{t}{2},z,y\right)\,dz\ge \int_{B(x,1)}p\left(\frac{t}{2},x,z\right)
q\left(\frac{t}{2},z,y\right)\,dz\\
&\ge c_{12}t^{-d/2}\exp\left(c_{13}t-\frac{c_{14}(1+|x|^2)}{t}\right),
\end{split}
\end{equation}
where the last inequality is due to the fact
\begin{align*}
q\left(\frac{t}{2},z,y\right)\ge c_{15}t^{-d/2},\quad
z\in B(x,1),\ |y-x|\le C_0t^{1/2}.
\end{align*}

Furthermore, if it additionally holds that $t\le c_{16} (1+|x|^2)$, then we set $U:=B(x,2C_0t^{1/2})$. Note that in this case $$\inf_{z\in U}(-V(z))\ge \frac{c_{17}}{(1+|x|)^\alpha}.$$ Then, for any $f\in B_b(\R^d)$ with {\rm supp}$[f]\subset B(x,C_0t^{1/2})$,
 \begin{align*}
 T_t^Vf(x)=&\Ee_x\left[f(B_t)\exp\left( -\int_0^tV(B_s)\,ds\right)\right] \ge \Ee_x\left[f(B_t)\exp\left(- \int_0^tV(B_s)\,ds\right)\I_{\{t<\tau_U\}}\right] \\
 \ge&\exp\left(t\inf_{z\in U}(-V(z))\right)\Ee_x\left[f(B_t)\I_{\{t<\tau_U\}}\right]
 \ge\exp\left(\frac{c_{17}t}{(1+|x|)^\alpha}\right)\int_{B(x,C_0t^{1/2})}q_U(t,x,z)f(z)\,dz\\
 \ge& c_{18}t^{-d/2}\exp\left(\frac{c_{17}t}{(1+|x|)^\alpha}\right)\int_{B(x,C_0t^{1/2})}f(z)\,dz,\end{align*} where in the last inequality we used \eqref{e:Diri}.
Hence,  for every
$t>2$
and $x,y\in \R^d$ with $|x-y|\le C_0t^{1/2}$ and $t\le c_{16} (1+|x|^2)$, it holds that
\begin{align*}
p(t,x,y)\ge c_{18}t^{-d/2}\exp\left(\frac{c_{17}t}{(1+|x|)^\alpha}\right).
\end{align*}

Combining this with \eqref{l4-4-4} yields the desired conclusion for $\alpha\in (0,+\infty)$.

{\bf Case 2: $\alpha\in (0,2)$.}\,\, According to
\eqref{l2-1-2},
$$
q_{B(0,R)}(t,z_1,z_1)\ge c_{1}t^{-d/2}\exp\left(-\frac{c_{2}t}{R^2}\right),\quad t>1,\ R>4,\ z_1,z_2\in B(0,2).
$$
On the other hand, by \eqref{e1-1a-},
\begin{equation*}
\inf_{z\in B(0,R)}(-V(z))\ge \frac{K_1}{(1+R)^\alpha},\quad R>4.
\end{equation*}
Hence, taking $U=B(0,R)$ and applying the same arguments as those for \eqref{l4-4-1a}, we  obtain
\begin{equation}\label{l4-4-5}
\begin{split}
p(t,x,y)&\ge \exp\left(t\cdot \inf_{z\in B(0,R)}(-V(z))\right)\cdot \inf_{z_1,z_2\in B(0,2)}q_{B(0,R)}(t,z_1,z_2)\\
& \ge c_{3}t^{-d/2}\exp\left(\frac{K_1t}{(1+R)^\alpha}-\frac{c_{4}t}{R^2}\right),\quad x,y\in B(0,2),\ R>4,\ t>1.
\end{split}
\end{equation}

Since $\alpha\in (0,2)$, we can find $R_0>4$ large enough so that
\begin{align*}
\frac{K_1}{(1+R_0)^\alpha}>\frac{2c_{4}}{R_0^2}.
\end{align*}
This along with \eqref{l4-4-5} yields that
\begin{align*}
p(t,x,y)\ge c_{5}t^{-d/2}\exp\left(\frac{K_1t}{2(1+R_0)^\alpha}\right),\quad x,y\in B(0,2),\ t>1.
\end{align*}

Having this estimate  at hand and following the arguments as those in {\bf Case 1}, we can prove the
desired conclusion \eqref{l4-4-1} for the case that $\alpha\in (0,2)$
without the restriction on lower bound of $K_1$.
\end{proof}

\begin{lemma}\label{l4-5}
There is a constant $K_0$ so that if \eqref{e1-1a-} holds for some $K_1\ge K_0$, then for
any $C_0>0$ there exists
positive constants $C_{4}$, $C_{5}$, $C_6$ and $C_7$ such that for all $t>4$ and $x,y\in \R^d$ with $|x-y|>C_0t^{1/2}$,
\begin{equation}\label{l4-5-1}
 p(t,x,y)\ge C_{4}t^{- {d}/{2}}\exp\left(-\frac{C_5|x-y|^2}{t}\right)\exp\left(\max\left\{\frac{C_6 t}{(1+|x|)^\alpha}, C_6t-\frac{C_7(1+|x|^2)}{t}\right\}\right).
\end{equation}
Moreover, if \eqref{e1-1a-} is satisfied with $\alpha\in (0,2)$ and $K_1,K_2\in (0,+\infty)$ $($without any restriction on the lower bound of $K_1)$, then
\eqref{l4-5-1} still holds for $p(t,x,y)$.
\end{lemma}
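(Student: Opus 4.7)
The plan is to chain Lemma \ref{l4-4} along carefully chosen paths from $x$ to $y$, in the spirit of the proofs of Lemmas \ref{l3-2} and \ref{l3-4}, but using the \emph{growth} provided by Lemma \ref{l4-4} rather than an exponential decay bound. Throughout I assume $|x|\le |y|$ and $|x-y|>C_0 t^{1/2}$, and I will establish the two lower bounds corresponding to the two arguments of the max separately; taking the better of the two yields \eqref{l4-5-1}. In each case the key identity is the semigroup decomposition $p(t,x,y)=\int p(s,x,z)p(t-s,z,y)\,dz$, together with the trivial inequality $p\ge q$ (since $V\le 0$) which will be used freely for the ``free'' legs of the chain.

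For the first lower bound, with factor $\exp(C_6 t/(1+|x|)^\alpha)$, I would integrate over a ball centered at $x$:
\[
p(t,x,y)\ge \int_{B(x,\eta\sqrt{t})} p(t/2,x,z)\,q(t/2,z,y)\,dz,
\]
choosing $\eta>0$ small enough that $|x-z|\le \eta\sqrt{t}$ satisfies the diagonal condition of Lemma \ref{l4-4} at time $t/2$. The lemma then produces $p(t/2,x,z)\ge c_1 t^{-d/2}\exp\bigl(c_2 t/(1+|x|)^\alpha\bigr)$ by taking the first argument of the max, while $|z-y|\le 2|x-y|$ gives $q(t/2,z,y)\ge c_3 t^{-d/2}\exp(-c_4|x-y|^2/t)$. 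The volume $|B(x,\eta\sqrt{t})|\asymp t^{d/2}$ absorbs one power of $t^{-d/2}$ and yields the target.

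For the second lower bound, with factor $\exp\bigl(C_6 t-C_7(1+|x|^2)/t\bigr)$, the chain must dip near the origin so that the potential gain is genuinely of order $t$. My plan is a three-step split
\[
p(t,x,y)\ge \int_{B(0,\eta\sqrt{t})}\!\int_{B(0,\eta\sqrt{t})} p(t/3,x,z_1)\,p(t/3,z_1,z_2)\,p(t/3,z_2,y)\,dz_1\,dz_2,
\]
with $\eta$ small. The two outer factors are bounded below by the free Gaussian, giving $\exp(-c(|x|^2+|y|^2)/t)$ after the standard estimate $|x-z_1|^2\le 2(|x|^2+\eta^2 t)$. The middle factor $p(t/3,z_1,z_2)$ lies in the near-diagonal regime of Lemma \ref{l4-4}: both $|z_i|$ are controlled by $\eta\sqrt{t}$, so $(1+|z_1|^2)/(t/3)$ is uniformly bounded and the \emph{second} argument of the max in Lemma \ref{l4-4} contributes $\ge c_* t$ with $c_*>0$. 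Multiplying the three on-diagonal scalings $t^{-d/2}$ by the two ball volumes $t^{d/2}$ collapses to a single $t^{-d/2}$ prefactor, and the inequality $|y|^2\le 2(|x|^2+|x-y|^2)$ rearranges the $|y|^2/t$ cost into the required combination $|x|^2/t+|x-y|^2/t$. The main obstacle is ensuring $c_*>0$: this is precisely where the hypothesis $K_1\ge K_0$ enters, through the first part of Lemma \ref{l4-4}. For the ``moreover'' clause with $\alpha\in(0,2)$, the second part of Lemma \ref{l4-4} supplies the same near-diagonal bound without any lower bound on $K_1$, so the identical chain argument goes through. A minor technical point is that the growth $c_* t$ only becomes effective once $t$ is larger than some fixed threshold; for the bounded intermediate range a direct application of \eqref{note-} closes the gap.
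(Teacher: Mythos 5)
Your argument is correct, but you have made it more elaborate than necessary and, in particular, your second three-step chain through the origin is redundant. Lemma \ref{l4-4} already delivers the \emph{full} maximum $\exp\bigl(\max\{C_2 t/(1+|x|)^\alpha,\,C_2 t - C_3(1+|x|^2)/t\}\bigr)$ for near-diagonal $p(t/2,x,z)$, so the single decomposition $p(t,x,y)\ge\int_{U}p(t/2,x,z)\,p(t/2,z,y)\,dz$ with $U=B(x,C_0 t^{1/2})$ — which is exactly your first chain — already proves the whole inequality in one stroke: apply Lemma \ref{l4-4} to the first leg (giving both arguments of the max at once), bound the second leg from below by the free Gaussian $q(t/2,z,y)\ge c\,t^{-d/2}\exp(-c'|x-y|^2/t)$ using $|z-y|\lesssim |x-y|$ when $|x-y|>C_0t^{1/2}$, and absorb the volume $|U|\asymp t^{d/2}$. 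This is precisely the paper's proof. By deliberately ``taking the first argument of the max'' in your first chain you threw away the second branch and then had to rebuild it via the detour near the origin; that detour is valid (the middle leg $p(t/3,z_1,z_2)$ is in the near-diagonal regime with $(1+|z_1|^2)/(t/3)$ bounded, and the rearrangement $|y|^2\le 2(|x|^2+|x-y|^2)$ recovers the correct exponents), but it re-derives information that Lemma \ref{l4-4} already supplies internally (Lemma \ref{l4-4} itself is proved by chaining through a ball near the origin). Your handling of the ``moreover'' clause and the small-$t$ range via \eqref{note-} is fine and matches the paper's structure.
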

\begin{proof}
As before we first consider the case that $\alpha\in (0,+\infty)$. Let $U:=B(x,C_0t^{1/2})$. By \eqref{l4-4-1}, for all $t>4$ and $x\in \R^d$,
\begin{align*}
\inf_{z\in U}p\left(\frac{t}{2},x,z\right)\ge c_1t^{-d/2}\exp\left(\max\left\{\frac{c_2 t}{(1+|x|)^\alpha},c_2t-\frac{c_3(1+|x|^2)}{t}\right\}\right).
\end{align*}
On the other hand, for all $t>0$ and $y\in \R^d$,
\begin{align*}
\inf_{z\in U}p\left(\frac{t}{2},z,y\right)&\ge
\inf_{z\in U}q\left(\frac{t}{2},z,y\right)\ge c_4t^{-d/2}\inf_{z\in U}\exp\left(-\frac{c_5|y-z|^2}{t}\right)\ge c_6t^{-d/2} \exp\left(-\frac{c_7|x-y|^2}{t}\right),
\end{align*}
where the last inequality follows from the fact that $\sup_{z\in U}|y-z|\le c_8|x-y|$ due to $|x-y|>C_0t^{1/2}$.
Hence, for all $t>4$ and $x,y\in \R^d$ with $|x-y|>C_0t^{1/2}$,
\begin{align*}
p(t,x,y)&\ge \int_{U}p\left(\frac{t}{2},x,z\right)p\left(\frac{t}{2},z,y\right)\,dz\ge \inf_{z\in U}p\left(\frac{t}{2},x,z\right)\cdot \inf_{z\in U}p\left(\frac{t}{2},z,y\right)\cdot |U|\\
&\ge c_{8}t^{-d/2}\exp\left(-\frac{c_9|x-y|^2}{t}\right)\exp\left(\max\left\{\frac{c_{10} t}{(1+|x|)^\alpha}, c_{10}t-\frac{c_{11}(1+|x|^2)}{t}\right\}\right).
\end{align*}

Now we consider the case that $\alpha\in (0,2)$. By Lemma \ref{l4-4}, we know that in this case \eqref{l4-4-1} holds without any restriction on
the constant $K_1$. Using this fact and following the same arguments as above, we can show \eqref{l4-5-1} when $\alpha\in (0,2)$.
\end{proof}

According to Lemmas \ref{l4-4} and \ref{l4-5}, we have  \begin{proposition}\label{p4-2}
There exists a positive constant $K_0$ so that if \eqref{e1-1a-} holds for some $K_1\ge K_0$, then there are constants $C_8,C_9, C_{10}$ and $C_{11}>0$ so that for all $t>4$ and $x,y\in \R^d$
\begin{equation}\label{p4-2-1}
 p(t,x,y)\ge C_{8}t^{- {d}/{2}}\exp\left(-\frac{C_9|x-y|^2}{t}\right)\exp\left(\max\left\{\frac{C_{10}t}{(1+|x|)^\alpha},C_{10}t-\frac{C_{11}(1+|x|^2)}{t}\right\}\right).
\end{equation} Moreover, if \eqref{e1-1a-} is satisfied with $\alpha\in (0,2)$ and $K_1,K_2\in (0,+\infty)$ $($without any restriction on the lower bound of $K_1)$, then
\eqref{p4-2-1} still holds for $p(t,x,y)$
\end{proposition}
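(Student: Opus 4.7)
The proposition is an immediate consolidation of Lemmas \ref{l4-4} and \ref{l4-5}, so the plan is simply to glue the two regimes together along the cut $|x-y| \lessgtr C_0 t^{1/2}$. I would begin by fixing any convenient constant $C_0 > 0$ (say $C_0 = 1$) so that both lemmas become applicable with a common threshold.

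In the near-diagonal regime $|x-y| \le C_0 t^{1/2}$ and $t>4$, Lemma \ref{l4-4} already produces the bound
\[
p(t,x,y) \ge C_1 t^{-d/2} \exp\!\left(\max\left\{\tfrac{C_2 t}{(1+|x|)^\alpha},\,C_2 t - \tfrac{C_3(1+|x|^2)}{t}\right\}\right).
\]
In this range $|x-y|^2/t \le C_0^2$, so for any $C_9 > 0$ the extra factor $\exp(-C_9 |x-y|^2/t) \ge \exp(-C_9 C_0^2)$ is a harmless constant that can be absorbed into $C_8$. This gives exactly the form of \eqref{p4-2-1} in this regime.

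In the off-diagonal regime $|x-y| > C_0 t^{1/2}$ and $t>4$, Lemma \ref{l4-5} provides \eqref{p4-2-1} directly with a compatible choice of constants $C_4, C_5, C_6, C_7$; we simply rename these to $C_8, C_9, C_{10}, C_{11}$. Taking the minimum of the constants from the two regimes and adjusting $C_8$ downward if necessary yields a single set of constants that works for all $x,y\in\R^d$ and $t>4$. The threshold value of $K_0$ (for the first assertion) is inherited from the larger of the two $K_0$'s furnished by Lemmas \ref{l4-4} and \ref{l4-5}, and the unrestricted case $\alpha \in (0,2)$ is inherited verbatim from the "moreover" parts of the same lemmas.

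I do not anticipate any substantial obstacle at this stage, since all the real work — the Markov/Dirichlet heat kernel argument on $B(0,R)$ to produce exponential growth, the extension from $B(x,1)$ to general $y$ via the semigroup property, and the off-diagonal propagation using a Gaussian factor — has already been carried out in Lemmas \ref{l4-4} and \ref{l4-5}. The only care required is bookkeeping on constants: verifying that the exponents $C_{10}, C_{11}$ emerging from each lemma can be made equal (by taking minima) and that the Gaussian constant $C_9$ selected in the off-diagonal regime is compatible with its trivial counterpart in the near-diagonal regime.
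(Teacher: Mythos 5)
Your proposal is correct and is exactly how the paper proceeds: Proposition \ref{p4-2} is stated with no independent proof, simply as the consolidation of Lemmas \ref{l4-4} (near-diagonal, where the Gaussian factor is trivially absorbed) and \ref{l4-5} (off-diagonal). Your bookkeeping on the cut $|x-y|\lessgtr C_0 t^{1/2}$, the harmless constant $\exp(-C_9 C_0^2)$, and the inheritance of the unrestricted $\alpha\in(0,2)$ case from the \emph{moreover} clauses all match the intended argument.
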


Now, we can present the

\begin{proof}[Proof of Theorem {\rm\ref{t1-2}}] Since $V(x)$ is bounded,  \eqref{note-} holds true. Hence, in order to prove Theorem \ref{t1-2}(1) and (2) we only need to consider the case that $t>1$ large enough. Then the desired assertion immediately follows from Proposition \ref{p4-1} and \ref{p4-2}. Theorem \ref{t1-2}(3) has been proved in Proposition \ref{p4-3}. \end{proof}

Finally, inspired by Remark \ref{remark:one} we would like to understand clearly the relation between the Schr\"{o}dinger heat kernel $p(t,x,y)$ and the Gaussian heat kernel $q(t,x,y)$ for the {\bf Case 2} by considering upper bounds of $T_t^V1(x)$.

\begin{lemma}\label{l4-1}
Suppose that \eqref{e1-1a-} holds. Then there exist constants $C_1,C_2>0$ so that for all $t>0$ and $x\in \R^d$,
\begin{equation}\label{l4-1-1}
\begin{split}
T_t^V1(x)\le C_1\exp\left(C_2\max\left\{\frac{t}{(1+|x|)^\alpha},t-\frac{1+|x|^2}{t}\right\}\right).
\end{split}
\end{equation}
\end{lemma}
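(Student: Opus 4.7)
The plan is to mirror the exit-time decomposition of Lemma \ref{l2-2} from the positive-potential case, splitting the Feynman-Kac expectation according to whether Brownian motion stays inside a ball around $x$ or exits it. The ``stays'' piece will produce the $\exp(t/(1+|x|)^\alpha)$ contribution and the ``exits'' piece the $\exp(t-(1+|x|^2)/t)$ contribution appearing in the $\max$ on the right-hand side of \eqref{l4-1-1}.

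For $|x|\le c_0$ bounded I would invoke \eqref{note-} directly: $T_t^V 1(x)\le e^{c_1 t}$, and since $(1+|x|)^\alpha\le (1+c_0)^\alpha$ in this regime, the right-hand side of \eqref{l4-1-1} dominates once $C_2\ge c_1(1+c_0)^\alpha$. For $|x|\ge c_0$, set $U=B(x,|x|/2)$ and split
\[
T_t^V 1(x)=\Ee_x\!\left[e^{-\int_0^t V(B_s)\,ds}\I_{\{\tau_U>t\}}\right]+\Ee_x\!\left[e^{-\int_0^t V(B_s)\,ds}\I_{\{\tau_U\le t\}}\right]=:I_1+I_2.
\]
Since $\sup_{z\in U}(-V(z))\le c(1+|x|)^{-\alpha}$ by \eqref{e1-1a-}, one gets $I_1\le\exp(c t/(1+|x|)^\alpha)$, matching the first entry of the $\max$. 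For $I_2$, combine the pathwise bound $e^{-\int V}\le e^{K_2 t}$ with the Gaussian exit estimate $\Pp_x(\tau_U\le t)\le C\exp(-c'|x|^2/t)$, exactly as in the treatment of $I_{21}$ and $I_{22}$ in the proof of Lemma \ref{l4-2}, obtaining $I_2\le C\exp(K_2 t-c'(1+|x|^2)/t)$, matching the second entry.

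Alternatively, since Proposition \ref{p4-1} has already been established, the cleanest route is to integrate $p(t,x,y)$ in $y$: the Gaussian-in-$y$ factor $\int_{\R^d}t^{-d/2}\exp(-C_9|x-y|^2/t)\,dy=(\pi/C_9)^{d/2}$ is a universal constant, so the $y$-independent factor $\exp(\max\{C_{10}t/(1+|x|)^\alpha,C_{10}t-C_{11}(1+|x|^2)/t\})$ pulls out and yields exactly the form of \eqref{l4-1-1} for $t>2$. The remaining range $0<t\le 2$ is handled by \eqref{note-}, since the right-hand side of \eqref{l4-1-1} is bounded below by $1$. The main bookkeeping obstacle is unifying the constants arising in the two entries of the $\max$ into a common $C_2$: the first entry naturally carries a constant from $\sup_U(-V)$, while the second carries the pair $(K_2,c')$; taking $C_2\ge\max\{C_{10},C_{11},K_2\}$ sufficiently large and absorbing the resulting multiplicative slack into $C_1$ gives the stated form.
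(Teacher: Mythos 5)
Your decomposition is exactly the paper's own: take $U=B(x,|x|/2)$, split $T_t^V1(x)=I_1+I_2$ at $\tau_U$, bound $I_1\le\exp(c_1 t/(1+|x|)^\alpha)$ using \eqref{e1-1a-} inside $U$, and bound $I_2$ by the global estimate $\sup(-V)\le K_2$ together with a Gaussian exit-probability bound, exactly as in the treatment of $I_{21},I_{22}$ in Lemma \ref{l4-2}. The alternative of integrating \eqref{p4-1-1} in $y$ for $t>2$ and invoking \eqref{note-} for $t\le 2$ is also legitimate and shorter. So the method is fine.

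The gap is in your final ``bookkeeping'' step, which is asserted rather than proved and in fact cannot be made to work in the generality claimed. After the decomposition you have $I_2\le c_3\exp\bigl(c_2 t-c_4(1+|x|^2)/t\bigr)$, where $c_2\simeq K_2$ comes from the pathwise potential bound and $c_4$ is the universal constant in the Gaussian tail. Enlarging $C_2$ does not help when $c_2>c_4$. Writing $A=t/(1+|x|)^\alpha$ and $B=t-(1+|x|^2)/t$, one would need
\[
c_2 t-c_4\,\frac{1+|x|^2}{t}\ \le\ C_2\max\{A,B\}+\log C_1
\]
uniformly in $t,x$. Evaluate at $t=\sqrt{1+|x|^2}$: then $B=0$, $A\simeq(1+|x|)^{1-\alpha}$, while the left-hand side equals $(c_2-c_4)\sqrt{1+|x|^2}\simeq(c_2-c_4)(1+|x|)$. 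Since $\alpha>0$, the left-hand side grows strictly faster than $C_2 A$ as $|x|\to\infty$, so no choice of $C_1,C_2$ works once $c_2>c_4$, i.e.\ once $K_2$ exceeds the universal Gaussian constant — a regime that \eqref{e1-1a-} explicitly allows. Your second route runs into the same wall: integrating \eqref{p4-1-1} in $y$ produces the exponent $\max\{C_{10}t/(1+|x|)^\alpha,\ C_{10}t-C_{11}(1+|x|^2)/t\}$ with two different constants, which does not simplify to the single-$C_2$ form of \eqref{l4-1-1} by the same computation. The paper is equally terse at this point (``Putting all the estimates above together yields the desired conclusion''), but what the argument actually delivers, and what should be stated, is the two-constant form matching Proposition \ref{p4-1}:
\[
T_t^V1(x)\ \le\ C_1\exp\left(\max\left\{\frac{C_2 t}{(1+|x|)^\alpha},\ C_2 t-\frac{C_3(1+|x|^2)}{t}\right\}\right).
\]
With that restatement, your proof (and the paper's) goes through without further difficulty; the claim that a common $C_2$ can be manufactured by taking it ``sufficiently large'' should be dropped.
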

\begin{proof}
Without loss of generality we can assume that $|x|\ge 4$; otherwise the assertion is trivial.
Define $U:=B(x,{|x|}/{2})$. By \eqref{e1-4}, we set
\begin{align*}
T_t^V 1(x)&=\Ee_x\left[\exp\left(-\int_0^t V(B_s)\,ds\right)\I_{\{\tau_U>t\}}\right]+
\Ee_x\left[\exp\left(-\int_0^t V(B_s)\,ds\right)\I_{\{\tau_U\le t\}}\right]\\
&=:I_1+I_2.
\end{align*}

According to \eqref{e1-1a-}, it holds that
$$
I_1\le \exp\left(\frac{c_1 t}{(1+|x|)^\alpha}\right).
$$
On the other hand, using \eqref{e1-1a-} again, we have
\begin{align*}
I_2&\le \Ee_x\left[\exp\left(-\int_0^t V(B_s)\,ds\right)\I_{\{\tau_U\le t,B_t\in B(x,{|x|}/{3})\}}\right]\\
&\quad +\Ee_x\left[\exp\left(-\int_0^t V(B_s)\,ds\right)\I_{\{\tau_U\le t,B_t\notin B(x,{|x|}/{3})\}}\right]\\
&\le e^{c_2t}\Big(\Pp_x\left(\tau_U\le t,B_t\in B(x,{|x|}/{3})\right)+\Pp_x\left(\tau_U\le t,B_t\notin B(x,{|x|}/{3})\right)\Big)\\
&=:e^{c_2t}\left(I_{21}+I_{22}\right).
\end{align*}
Furthermore, by the arguments for \eqref{l2-2-3} and \eqref{l2-2-4}, we find
\begin{align*}
I_{21}+I_{22}\le c_3\exp\left(-\frac{c_4(1+|x|^2)}{t}\right).
\end{align*}

Putting all the estimates above together yields the desired conclusion \eqref{l4-1-1}.
\end{proof}

\section{Appendix}
In this section, we recall the following three known lemmas.

\begin{lemma}$($\cite[Lemma 2.1]{CW}$)$\label{l2-6}
Suppose that $U$ is a domain in $\R^d$. Then, for every $x\in U$, $y\notin \bar U$ and $t>0$,
\begin{equation}\label{l2-4-2}
p(t,x,y)=\Ee_x\left[\exp\left(-\int_0^{\tau_U}V(B_s)\,ds\right)\I_{\{\tau_U\le t\}}p\left(t-\tau_U,B_{\tau_U},y\right)\right].
\end{equation}
\end{lemma}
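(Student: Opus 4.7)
The plan is to derive the identity by applying the strong Markov property of Brownian motion at the exit time $\tau_U$ to the Feynman--Kac formula \eqref{e1-4}, exploiting crucially that $y \notin \bar U$ so that the event $\{\tau_U > t\}$ gives no contribution.

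More precisely, first I would fix a test function $f \in C_b(\R^d)$ with $\supp[f] \subset \R^d \setminus \bar U$. Starting from \eqref{e1-4} and splitting according to $\{\tau_U \le t\}$ versus $\{\tau_U > t\}$, I would write
\begin{align*}
T_t^V f(x) &= \Ee_x\!\left[f(B_t) \exp\!\left(-\int_0^t V(B_s)\,ds\right)\I_{\{\tau_U > t\}}\right] \\
&\quad + \Ee_x\!\left[f(B_t) \exp\!\left(-\int_0^t V(B_s)\,ds\right)\I_{\{\tau_U \le t\}}\right].
\end{align*}
On $\{\tau_U > t\}$ we have $B_t \in U$, so $f(B_t) = 0$ by the support assumption; hence the first term vanishes.

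Next I would apply the strong Markov property at $\tau_U$ to the remaining term. Splitting the exponential as $\exp(-\int_0^{\tau_U}V(B_s)\,ds)\cdot \exp(-\int_{\tau_U}^t V(B_s)\,ds)$ and using the tower property with the natural filtration of $\{B_s\}$, the second term equals
\begin{equation*}
\Ee_x\!\left[\exp\!\left(-\int_0^{\tau_U}V(B_s)\,ds\right)\I_{\{\tau_U \le t\}}\,\Ee_{B_{\tau_U}}\!\!\left[f(B_{t-\tau_U})\exp\!\left(-\int_0^{t-\tau_U}V(B_s)\,ds\right)\right]\right].
\end{equation*}
Recognising the inner expectation as $T_{t-\tau_U}^V f(B_{\tau_U}) = \int_{\R^d}p(t-\tau_U, B_{\tau_U}, y)f(y)\,dy$ via \eqref{e1-5}, and then swapping the $dy$-integral with the outer expectation by Fubini (legitimate since $V$ is bounded below and $f$ is bounded with compact support, so everything is finite), I obtain
\begin{equation*}
\int_{\R^d}f(y)\,p(t,x,y)\,dy = \int_{\R^d}f(y)\,\Ee_x\!\left[\exp\!\left(-\int_0^{\tau_U}V(B_s)\,ds\right)\I_{\{\tau_U \le t\}}\,p(t-\tau_U, B_{\tau_U}, y)\right]\,dy.
\end{equation*}
Since this equality holds for every $f \in C_b(\R^d)$ supported in $\R^d \setminus \bar U$, and both sides are continuous functions of $y$ on $\R^d \setminus \bar U$ (the left side by joint continuity of $p$ recalled in the introduction, the right side by dominated convergence using continuity of $p(t-\tau_U, B_{\tau_U}, \cdot)$ off $\bar U$), the identity \eqref{l2-4-2} follows pointwise for every $y \notin \bar U$.

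The only subtle point I foresee is justifying the dominated-convergence / Fubini steps, which requires an integrable dominating function for $p(t-\tau_U,B_{\tau_U},y)$. This is handled by the trivial bound $p(s,\cdot,y) \le e^{c s}q(s,\cdot,y)$ coming from the lower boundedness of $V$ (cf.\ \eqref{note-}), together with the fact that $y$ stays in a compact set away from $\bar U$; the other manipulations (the strong Markov property at the stopping time $\tau_U$, and the vanishing of the $\{\tau_U > t\}$ term) are routine given that $V$ is locally bounded and $\{B_t\}$ has continuous paths.
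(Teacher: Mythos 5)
Your proof is correct. The paper itself does not prove this lemma but simply cites \cite[Lemma 2.1]{CW} (noting the extension from $V\ge0$ to $V$ bounded below), and your argument --- testing against $f$ supported off $\bar U$, killing the $\{\tau_U>t\}$ term, applying the strong Markov property at $\tau_U$ to recover $T^V_{t-\tau_U}f(B_{\tau_U})$, and then passing from the weak identity to the pointwise one via joint continuity of $p$ and dominated convergence (using that $B_{\tau_U}\in\partial U$ stays a positive distance from $y\notin\bar U$) --- is precisely the standard proof one finds in that reference.
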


Though Lemma \ref{l2-6} was proved in \cite[Lemma 2.1]{CW} when the potential $V(x)\ge 0$ for all $x\in \R^d$, the proof still works for general locally bounded potentials $V$ that are bounded from below.

\ \

For every domain $D \subset \R^d$,  denote by $q_D(t,x,y)$ the Dirichlet heat kernel of the Brownian motion $\{B_t\}_{t\ge 0}$ on $D$,
and let $\tau_D=\{t>0: B_t\notin D\}$ be the first exit time from
$D$ for $\{B_t\}_{t\ge 0}$.

\begin{lemma}$($\cite[Theorem 1.1]{H}$)$
For every $x\in \R^d$ and $r>0$,
\begin{equation}\label{l2-5-1}
\Pp_x(\tau_{B(x,r)}\in dt, B_{\tau_{B(x,r)}}\in dz)=\frac{1}{2}\frac{\partial q_{B(x,r)}(t,x,\cdot)}{\partial n}(z)\,\sigma(dz)\,dt,
\end{equation}
where $\sigma(dz)$ denotes the Lebesgue surface measure on $\partial B(x,r)$ $($in particular, when $d=1$, $\sigma(dz)$ is the Dirac measure
at the boundary$)$,
and  $\frac{\partial q_{B(x,r)}(t,x,\cdot)}{\partial n}(z)$ denotes the exterior normal derivative
of $q_{B(x,r)}(t,x,\cdot)$ at the point $z\in \partial B(x,r)$.
\end{lemma}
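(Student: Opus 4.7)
The plan is to identify the joint law of $(\tau, B_\tau)$ with $\tau = \tau_{B(x,r)}$ by combining the strong Markov property of Brownian motion with Green's second identity for the Dirichlet heat kernel $q_{B(x,r)}$.

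First I would check the total mass. The survival probability $u(t):=\Pp_x(\tau > t) = \int_{B(x,r)} q_{B(x,r)}(t,x,y)\,dy$ satisfies $u(0)=1$ and $u(\infty)=0$. Differentiating in $t$ using the heat equation $\partial_t q_{B(x,r)} = \tfrac12 \Delta_y q_{B(x,r)}$ in $B(x,r)$ together with the divergence theorem yields
$$u'(t) = \tfrac12 \int_{B(x,r)} \Delta_y q_{B(x,r)}(t,x,y)\,dy = \tfrac12 \int_{\partial B(x,r)} \frac{\partial q_{B(x,r)}(t,x,\cdot)}{\partial n}(z)\,\sigma(dz),$$
so integrating in $t$ confirms that the measure $\tfrac12 \partial_n q_{B(x,r)}\,\sigma(dz)\otimes dt$ (with the sign convention fixed by the statement, so that it is nonnegative) has total mass one and gives the correct exit-time marginal.

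Next I would test the joint law against an arbitrary $g\in C_c^\infty(\R^d)$ supported in the complement of $\overline{B(x,r)}$. The strong Markov property gives
$$\int_{\R^d} q(t,x,w)g(w)\,dw \;=\; \Ee_x[g(B_t)] \;=\; \int_0^t\!\!\int_{\partial B(x,r)}\!\!\int_{\R^d} q(t-s,z,w)g(w)\,dw\,\mu_x(ds,dz),$$
where $\mu_x$ is the (unknown) joint law of $(\tau, B_\tau)$ and $g$ vanishes in $B(x,r)$. On the other hand, applying Green's second identity on $B(x,r)$ to the pair $q_{B(x,r)}(t-s,x,\cdot)$ (which vanishes on the boundary) and $q(s,\cdot,w)$, and integrating over $s\in(0,t)$, produces the classical Duhamel decomposition
$$q(t,x,w) - q_{B(x,r)}(t,x,w)\,\I_{\{w\in B(x,r)\}} \;=\; \tfrac12 \int_0^t\!\!\int_{\partial B(x,r)} \frac{\partial q_{B(x,r)}(t-s,x,\cdot)}{\partial n}(z)\,q(s,z,w)\,\sigma(dz)\,ds.$$
Pairing both sides with $g$ and comparing with the strong-Markov expression above forces $\mu_x(ds,dz) = \tfrac12 \partial_n q_{B(x,r)}(s,x,\cdot)(z)\,\sigma(dz)\,ds$, and a density argument in $g$ then upgrades this to the claimed disintegration of the joint law.

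The main obstacle is justifying the boundary regularity needed in Green's identity and the Fubini interchanges. Because $B(x,r)$ is a smooth domain, parabolic Schauder theory yields $q_{B(x,r)}(t,x,\cdot)\in C^\infty(\overline{B(x,r)})$ for every $t>0$, and the boundary-decay estimate $q_{B(x,r)}(t,x,y) \lesssim \mathrm{dist}(y,\partial B(x,r))$ combined with standard Gaussian upper bounds makes $\partial_n q_{B(x,r)}(\cdot,x,\cdot)$ locally integrable in $(s,z)$, which is what the identification requires. The one-dimensional case is degenerate: the ``surface'' $\partial B(x,r)=\{x-r,x+r\}$ is two points, $\sigma$ becomes a sum of Dirac masses, and $\partial_n$ reduces to the one-sided spatial derivatives of $q_{B(x,r)}(t,x,\cdot)$ at $x\pm r$, so the same PDE computation carries over and the formula holds under the Dirac-measure interpretation noted in the statement.
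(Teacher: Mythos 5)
The paper does not reprove this lemma at all: it is stated as a citation to Hsu's theorem, and the paper's entire ``proof'' consists of the reference. So you are supplying an argument where the paper offers none. Your route --- combine the strong Markov property (which gives the Duhamel-type representation of $q(t,x,w)$ in terms of the unknown exit law $\mu_x$) with the parabolic Green's identity applied to the pair $q_{B(x,r)}(t-s,x,\cdot)$ and $q(s,\cdot,w)$ (which gives the same representation with the explicit normal-derivative kernel) --- is the standard PDE way to derive this identity, and the structure is sound. Hsu's own proof in the cited reference proceeds rather differently, by stochastic-analytic means, so yours is a genuinely alternative and arguably more elementary argument for a reader who already trusts the regularity theory.

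There is one step you should not wave through, however: the final identification ``comparing with the strong-Markov expression\dots forces $\mu_x=\tfrac12\partial_n q_{B(x,r)}\,\sigma\,ds$, and a density argument in $g$ then upgrades this.'' What you actually obtain from the comparison is that $\int_0^t\int_{\partial B(x,r)} (P_{t-s}g)(z)\,(\mu_x-\nu)(ds,dz)=0$ for all $t>0$ and all $g$ supported off $\overline{B(x,r)}$, where $\nu$ is the candidate measure and $P_r$ is the free heat semigroup. Passing from this family of identities to $\mu_x=\nu$ is not just ``density in $g$'': one also needs to invert the time-convolution. The clean way is to take Laplace transforms in $t$, which turns the left-hand side into $\int_{(0,\infty)\times\partial B(x,r)} e^{-\beta s}(R_\beta g)(z)\,(\mu_x-\nu)(ds,dz)$ with $R_\beta$ the resolvent; then uniqueness of Laplace transforms in $s$ together with density of $\{R_\beta g|_{\partial B(x,r)}\}$ gives $\mu_x=\nu$. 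Alternatively one can first match the boundary marginals by letting $g$ concentrate near a boundary point and then identify the conditional time law, but either way an explicit injectivity argument is needed. You should also make the sign convention explicit once and for all (the literal exterior normal derivative of the Dirichlet heat kernel is nonpositive, and the lemma and the companion Lemma 4.3 in the paper use the opposite sign), otherwise the Green's identity computation produces a $-\tfrac12$ that appears to contradict the stated formula.
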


\ \

The following lemma is essentially taken from \cite[Theorem 1.1]{Z} and \cite[Theorem 1]{MS}.
\begin{lemma}\label{l2-1}
There exist positive constants $c_i$, $i=1,\cdots, 4$, such that for every $R>0$, $x\in \R^d$, $t>0$ and $y,z\in B(x,R/2)$,
\begin{equation}\label{l2-1-2}
\begin{split}
\frac{c_1}{t^{d/2}}\exp\left(-c_2\left(\frac{|y-z|^2}{t}+\frac{t}{R^2}\right)\right)\le q_{B(x,R)}(t,y,z)\le \frac{c_3}{t^{d/2}}\exp\left(-c_4\left(\frac{|y-z|^2}{t}+\frac{t}{R^2}\right)\right).
\end{split}
\end{equation}
Moreover, there exist positive constants $c_i$, $i=5,\cdots,8$, so that for any $R>0$, $x\in \R^d$, $t>0$ and $y\in \partial B(x,R)$,
\begin{equation}\label{l2-1-1}
\begin{split}
\frac{c_5R}{t^{d/2+1}}\exp\left(-c_6\left(\frac{R^2}{t}+\frac{t}{R^2}\right)\right)&\le   \frac{\partial q_{B(x,R)}(t,x,\cdot)}{\partial n}(y)
 \le \frac{c_7R}{t^{d/2+1}}\exp\left(-c_8\left(\frac{R^2}{t}+\frac{t}{R^2}\right)\right).
\end{split}
\end{equation}
\end{lemma}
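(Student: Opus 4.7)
My plan is to reduce first to the unit ball via Brownian scaling, and then handle the two assertions (bulk Dirichlet heat kernel and boundary normal derivative) separately. If $\tilde B_s := R^{-1}(B_{R^2 s}-x)$, then $\{\tilde B_s\}_{s\ge 0}$ is standard Brownian motion started at $0$, killed on exit from $B(0,1)$. This yields
$$q_{B(x,R)}(t,y,z) = R^{-d}\, q_{B(0,1)}\!\left(t/R^2,\, (y-x)/R,\, (z-x)/R\right),$$
and, by the chain rule applied to the spatial variable,
$$\frac{\partial q_{B(x,R)}(t,x,\cdot)}{\partial n}(y) = R^{-(d+1)}\, \frac{\partial q_{B(0,1)}(t/R^2,0,\cdot)}{\partial n}\!\left((y-x)/R\right).$$
Under this rescaling with $s=t/R^2$, both desired estimates reduce to statements purely on $B(0,1)$ involving $\exp(-c(|u-v|^2/s+s))$ and $\exp(-c(1/s+s))$ respectively. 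So one may assume $R=1$ and $x=0$.

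For the bulk estimate \eqref{l2-1-2} on $B(0,1)$, I would split the time variable into $s\le 1$ and $s>1$. For $s\le 1$ the upper bound follows from the trivial domination $q_{B(0,1)}(s,u,v)\le q(s,u,v)$ and \eqref{e1-6}, with the harmless factor $e^{-cs}$ absorbed into the constant; the lower bound comes from a short chain argument connecting $u$ and $v$ by a bounded number of small balls contained in $B(0,1)$, applying the standard Gaussian lower bound on each link. For $s>1$ I would use the eigenfunction expansion $q_{B(0,1)}(s,u,v)=\sum_k e^{-\lambda_k s}\phi_k(u)\phi_k(v)$ with $0<\lambda_1<\lambda_2\le\cdots$: the upper bound $q_{B(0,1)}(s,u,v)\le C e^{-\lambda_1 s}$ follows from Sobolev-type $L^\infty$ bounds on eigenfunctions together with Weyl asymptotics (making the series converge), while the lower bound $q_{B(0,1)}(s,u,v)\ge c e^{-\lambda_1 s}\phi_1(u)\phi_1(v)$ uses positivity of $\phi_1$ and the fact that it is bounded below on $B(0,1/2)$. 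Matching the small-time Gaussian factor $e^{-|u-v|^2/s}$ with the large-time exponential $e^{-\lambda_1 s}$ yields \eqref{l2-1-2}.

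For the normal derivative at the boundary starting from the center, I would exploit Hsu's identity \eqref{l2-5-1} together with the rotational symmetry of $B(0,1)$ around $0$: the joint law of $(\tau_{B(0,1)}, B_{\tau_{B(0,1)}})$ under $\Pp_0$ factors as $\rho(t)\,dt\otimes \mu(dy)$, where $\mu$ is the uniform measure on $\partial B(0,1)$ and $\rho$ is the exit-time density. Thus $\partial_n q_{B(0,1)}(t,0,\cdot)(y)$ is constant in $y\in\partial B(0,1)$ and equal to $2\rho(t)/\sigma(\partial B(0,1))$, reducing the task to two-sided estimates on $\rho$. For large $t$, differentiating the eigenfunction expansion of $\Pp_0(\tau_{B(0,1)}>t)$ yields $\rho(t)\asymp e^{-\lambda_1 t}$; for small $t$ one combines the radial Bessel-function structure (or a reflection-type bound on $\Pp_0(\sup_{s\le t}|B_s|\ge 1)$) with the polynomial factor $t^{-d/2-1}$ coming from the boundary decay of the heat kernel, obtaining $\rho(t)\asymp t^{-d/2-1}e^{-c/t}$. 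Combining both regimes gives \eqref{l2-1-1}.

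The main obstacle will be the \emph{small-time lower bound} on the normal derivative, which requires genuine boundary flux estimates and cannot be obtained by soft probabilistic arguments alone. A clean approach is to compare locally with the heat kernel on the half-space, where the reflection principle gives the exact formula $q_{\mathbb H}(t,y,z)=q(t,y,z)-q(t,y,z^*)$ and hence an explicit expression for the normal derivative, and then control the perturbation from flat to curved boundary by classical boundary regularity. Matching the constants in the $\exp(-c(1/s+s))$ factor across the small- and large-time regimes is standard but requires care.
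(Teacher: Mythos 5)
Your argument is correct in outline, but it takes a genuinely different route from the paper. The paper disposes of the lemma in a few lines: after the same scaling reduction, \eqref{l2-1-2} is obtained by citing the sharp two-sided Dirichlet heat kernel estimates for the ball with the boundary factors $(1-|y|)(1-|z|)$ from \cite[Theorem 1.1]{Z} and \cite[Theorem 1]{MS}, and then simply restricting to $y,z\in B(0,1/2)$ where those factors are bounded above and below; the normal-derivative bound \eqref{l2-1-1} is taken wholesale from \cite[Lemma 2.3]{CW}. You instead reprove both ingredients from scratch (chaining plus eigenfunction expansion for the bulk estimate, Hsu's identity plus rotational symmetry for the flux). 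Your observation that rotational symmetry makes $\partial_n q_{B(0,1)}(t,0,\cdot)$ constant on the sphere, reducing everything to two-sided bounds on the exit-time density, is a clean and attractive alternative to citing \cite{CW}, and in fact more elementary.

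Two points where your sketch needs tightening. First, for $s>1$ you assert $q_{B(0,1)}(s,u,v)\ge c\,e^{-\lambda_1 s}\phi_1(u)\phi_1(v)$ directly from the eigenfunction expansion and positivity of $\phi_1$; but the remaining terms $\sum_{k\ge2}e^{-\lambda_k s}\phi_k(u)\phi_k(v)$ are not sign-definite, so you must additionally exploit the spectral gap $\lambda_2>\lambda_1$ and an $L^\infty$ bound on $\sum_k e^{-\lambda_k/2}\|\phi_k\|_\infty^2$ to show the first term dominates for $s\ge T_0$, and then cover $1\le s\le T_0$ by a compactness or chaining argument. Second, for the small-time lower bound on the exit density, the half-space reflection comparison is indeed where the real work sits; the ``classical boundary regularity'' you invoke needs to be made quantitative (e.g.\ via an explicit barrier or a two-sided comparison with the heat kernel of a tangent half-space on a boundary collar), as a qualitative statement will not deliver the sharp factor $t^{-d/2-1}e^{-c/t}$. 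Neither issue is a conceptual obstruction, but both are steps that a reviewer would insist be written out, whereas the paper's citation of \cite{Z}, \cite{MS} and \cite{CW} avoids them entirely.
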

\begin{proof}
By the translation invariant property, it suffices to prove \eqref{l2-1-2} for $x=0$.  According to \cite[Theorem 1.1]{Z} and \cite[Theorem 1]{MS},
for every $y,z\in B(0,1)$ and $t>0$,
\begin{align*}
q_{B(0,1)}(t,x,y)\asymp t^{-d/2}\exp\left(-\frac{|y-z|^2}{t}\right)\cdot
\begin{cases}
\min\left\{1,\frac{(1-|y|)(1-|z|)}{t}\right\},\ & 0<t\le 1,\\
e^{-t}(1-|y|)(1-|z|),\ & t>1.
\end{cases}
\end{align*}
This along with the scaling property
\begin{align*}
q_{B(0,1)}(t,y,z)=R^dq_{B(0,R)}(R^2t, Ry,Rz)
\end{align*}
yields the desired estimates \eqref{l2-1-2}.

The estimate \eqref{l2-1-1} has been proved by \cite[Lemma 2.3]{CW}, and so we omit it here. \end{proof}

\ \

\noindent {\bf Acknowledgements.}\,\,  The research of Xin Chen is supported by National Natural Science Foundation of China
(Nos. 12122111).
The research
of Jian Wang is supported by the National Key R\&D Program of China (2022YFA1006003) and the National Natural Science Foundation of China (Nos. 12071076 and 12225104).











\end{document}